\def\mathcolor#1#{\@mathcolor{#1}}
\def\@mathcolor#1#2#3{%
  \protect\leavevmode
  \begingroup
    \color#1{#2}#3%
  \endgroup
}
\newtheorem{defn}{Definition}
\newtheorem{lem}{Lemma}
\newtheorem{prop}{Proposition}
\newtheorem{thm}{Theorem}
\newtheorem{cor}{Corollary}
\newtheorem{cor-proof}{Corollary of the Proof}
\newtheorem{remark}{Remark}
\newtheorem{ex}{Example}
\newtheorem{proc contact}{Procedure Contact}
\newcommand{\field}[1]{\mathbb{#1}}
\newcommand{\partiald}[2]{\displaystyle\frac{\partial#1}{\partial#2}}
\newcommand{\partialds}[3]{\displaystyle\frac{\partial^2#1}{\partial#2\partial#3}}
\newcommand{\partialdss}[2]{\displaystyle\frac{\partial^2#1}{\partial #2^2}}
\newcommand{\mcal}[1]{\mathcal{#1}}
\newcommand{\wt}[1]{\widetilde{#1}}
\newcommand{\pr}[2]{{#1}^{(#2)}}
\newcommand{\be}{\textbf{e}}
\begin{document}

\title{On Some Local Geometry of Bi-Contact Structures}

\author{Taylor J. Klotz and George R. Wilkens}
\address{Department of Applied Mathematics, Universty of Colorado at Boulder, 1111 Engineering Center, ECOT 225
526 UCB, Boulder, CO 80309-0526}
\email{taylor.klotz@colorado.edu,grw@math.hawaii.edu}

\keywords{Contact Geometry, Exterior Differential Systems, Moving Frames, Differential Geometry}
\subjclass{53D22, 57K33, 58A15, 53B20}

\begin{abstract}
We investigate the local geometry of a pair of independent contact structures on 3-manifolds under maps that independently preserve each contact structure. We discover that such maps are homotheties on the contact 1-forms and we discover differential invariants associated to such structures under these equivalences. This allows us to generalize the notion of contact circles and (equilateral) hyperbolas to contact ellipses and hyperbolas. Moreover, these invariants may sometimes be used to define a complete local normal form and in at least one case are related to symplectic structures through a natural $e$-structure on a bundle arising from the Cartan equivalence method. Finally, there is a type of natural Riemannian metric (but not necessarily the well-known associated contact metric) and we discover certain curvatures may be written in terms of the bi-contact differential invariants.
\end{abstract} 

\maketitle
\tableofcontents
\section{Introduction}\label{sec-intro}
Contact geometry and topology has become a large industry of mathematical research in recent decades, especially in the context of additional compatible structures \cite{RiemannianContactSurvey}. Most results that do not involve additional structures are topological in nature due to the classical theorem of Pfaff, which says that there exists a local coordinate system $(x^i,u,p_i)$ about any point $p$ in a contact manifold $M$ such that the contact 1-form $\theta$ has the expression $\theta=du-p_i\,dx^i$. In other words, all (co-orientable) contact 1-forms have the same local geometry and geometric questions become global ones concerning a contact structure's associated Legendrian submanifolds, or other interesting embedding properties such as over-twisted disks.

However, if one were to simultaneously consider two linearly independent contact structures on the same manifold $M$, known as \textit{bi-contact structures}\cite{Mitsumatsu}, then the Pfaff/Darboux normal form doesn't necessarily apply to both contact structures simultaneously. It is then worth asking if such a normal form exists or if instead local invariants arise out of equivalences determined by maps which simultaneously preserve both contact structures as contactomorphisms. 

\begin{defn}
Let $M$ be an orientable 3-manifold with two contact distributions $\mcal{D}_1$ and $\mcal{D}_2$ such that $\mcal{D}_1(p)\neq \mcal{D}_2(p)$ for every $p\in M$. Then we call $(\mcal{D}_1,\mcal{D}_2)$ a \textbf{bi-contact structure}. Whenever, for $i=1,2$, $\mcal{D}_i=\ker\omega^i$ for 1-forms $\omega^i$, then we may refer to $(\omega^1,\omega^2)$ as the bi-contact structure. 
\end{defn}

Bi-contact structures are not to be confused with so-called \textit{contact pairs}, which were introduced with such terminology in \cite{BandeContactCouplesAMS}, but were once called bi-contact structures in \cite{BlairDiffBiContactNotion}. However, we will discover that there is some relation between certain bi-contact structures on 3-manifolds and symplectic couples/pairs on an associated 4-manifold, see \ref{symp-prop}. The notion of symplectic couple was introduced in the work of Geiges in \cite{GeigesSymplecticCouple}. 

We also want to draw attention to the work in \cite{Jackman-Bi-Contact}, which studies essentially the same problem as in this manuscript but with a stronger lean towards local normal forms and symmetries and cites an earlier version of this very paper, which first appeared on the arxiv in December 2023. Of the primary differences between \cite{Jackman-Bi-Contact} and this work is the classification of different bi-contact structures in light of the existing literature. Moreover, we investigate the Riemannian geometry of some invariant metrics in special cases of bi-contact structures. 

Many of our results will be local, and when they are not local, we will assume our bi-contact structures are both co-orientable. As such, it may be assumed that all bi-contact structures mentioned here are co-orientable. In fact, we will study bi-contact structures via differential forms generally. 

\begin{defn}
A map $\Phi:M\to M$ is called a \textbf{bi-contactomorphism} if $\Phi$ if a contactomorphism for both $\mcal{D}_1$ and $\mcal{D}_2$ simultaneously. 
\end{defn}

Bi-contact structures have been studied since at least the early 90's. Indeed, as far as the authors have found, it was Mitsumatsu who introduced the concept in \cite{Mitsumatsu}, but with an additional requirement that the two contact forms induce opposite orientations. We dispense with this requirement as we are interested in studying the equivalence problem with respect to the above mentioned bi-contactomorphisms. It turns out that the opposing orientations will actually define a particular subclass of bi-contact structures that is detected through the first step of torsion normalization in Cartan's method of equivalence. 

Among the bi-contact structures, we will introduce three general cases in Definition \ref{elliptic-hyperbolic} in Section \ref{C-section}: elliptic bi-contact structures, hyperbolic bi-contact structures, and linear bi-contact structures. The linear case will not be explored in this work but merits additional study. Moreover, as special cases, we will naturally discover bi-contact circles and bi-contact equilateral hyperbolas. Bi-contact circles were first introduced by Geiges and Gonzalo in \cite{GeigesTautCirc}. Perrone studied bi-contact circles further by adding an additional compatible Riemannian metric structure known as an \textit{associated contact metric} \cite{PerroneBiContactCirc}. More recently, Perrone also introduced the notion of bi-contact equilateral hyperbolas in \cite{PerroneBiContactHyp} (which Perrone simply called contact hyperbolas) and again studied such structures with associated contact metrics. Our work does not invoke any special notions of compatibility with other structures; however, we do recover an invariant Riemannian metric. 

Moreover, we detect the special case of $(-\epsilon )$-Cartan structures. Which are a further specialization among bi-contact circles and bi-contact hyperbolas.
\subsection{Contact Ellipses and Hyperbolas}
\begin{defn}
A bi-contact structure is called a \textbf{contact ellipse} if 
\[
\omega_a=a_1\omega^1+a_2\omega^2,
\]
is a contact form for every $a=(a_1,a_2)$ belonging to an ellipse $E_{S^1}$ in $\mathbb{R}^2$. Similarly, we say that a bi-contact structure is a \textbf{contact hyperbola} if 
\[
\omega_a=a_1\omega^1+a_2\omega^2,
\]
is a contact form for every $a=(a_1,a_2)$ belonging to a hyperbola $H^1_{r}$ in $\mathbb{R}^2$ where $r=\pm1$ determines the branch of the hyperbola. In addition, if a contact ellipse $\omega_a$ generates the same volume form $\omega_a\wedge d\omega_a$ independent of $a\in E_{S^1}$ then $\omega_a$ is called a \textbf{taut contact ellipse}. If a contact hyperbola $\omega_a$ generates the same volume form up to sign $\omega_a\wedge d\omega_a=r\omega^1\wedge d\omega^1=-r\omega^2\wedge d\omega^2$ independent of $a\in H^1_{r}$ then $\omega_a$ is called a \textbf{taut contact hyperbola}.
\end{defn}

The special case of taut contact circles and taut contact hyperbolas (for the equilateral hyperbolas) have been studied before. Indeed, 3-manifolds admitting taut contact circles have been completely classified in the compact case \cite{GeigesTautCirc}. Taut contact equilateral hyperbolas were first introduced and partially classified by Perrone in \cite{PerroneBiContactHyp}, but such structures are still not completely understood. Moreover, there is a beautiful theory of contact circles up to conformal/homothety class again by Geiges and Gonzalo in \cite{GeigesModuli},\cite{GeigesTransversely}. As such, bi-contactomorphism has in this way been studied before, but in a way specific to taut contact circles. 
\begin{defn}
A bi-contact structure $(\omega^1,\omega^2)$ is called a \textbf{$(-\epsilon )-$Cartan Structure} if
\[
\omega^1\wedge d\omega^2=\omega^2\wedge d\omega^1=0.
\] 
\end{defn}
The $(-\epsilon )-$Cartan Structures are significantly more rigid than taut contact circles and hyperbolas. 

Since we study the general problem of equivalence under bi-contactomorphisms, we will naturally discover both previous results and new geometry that is independent of additional structure, allowing the study of local bi-contact structures to stand on its own. Moreover, there are natural generalizations, not only in the way of considering contact spheres and hyperboloids in the sense of Zessin \cite{Zessin} and Perrone \cite{PerroneBiContactCirc,PerroneBiContactHyp}, but also in the sense of curves of higher degree polynomials on odd dimensional manifold of dimension 5 and higher, which is a planned subject of future work by the first author. 

Lastly, we mention that there is some work to study the global topology of Anosov flows on 3-manifolds, which are intimately related to bi-contact structures \cite{SalmoiraghiGoodman,HozooriContact, Mitsumatsu,Massoni}, including a development of Floer theory applied to such bi-contact structures \cite{FloerBicontact}. Although we are restricting ourselves to a geometry of bi-contact equivalence, we hope that perhaps some of the structure equations found here are helpful for such efforts. Indeed, in a forthcoming work, we will study applications of these structure equations to Anosov flows and Beltrami fields.

The paper is organized as follows: in Section \ref{sec-eq-prob} we setup the equivalence problem and prove that bi-contactomorphisms are in fact homotheties of the contact 1-forms. Moreover, we discover a local differential invariant $C$ and classify bi-contact structures as elliptic, hyperbolic, or linear based on a quadratic form defined by $C$ and an orientation condition. This quadratic form appears naturally from the equivalence problem and was found by Geiges and Gonzalo in \cite{GeigesTautCirc} and \cite{GeigesCirc}, but only closely studied when it represented a circle of contact structures. We then apply the equivalence method to three cases based on $dC$. In each case, we can either find an $e$-structure locally, with complicated torsion, or we can prolong to an involutive system. In Secton \ref{sec-norm-form} we provide a local normal form in two special cases. In Section \ref{sec-inv-metric} we explore some curvature calculations of natural Riemannian metrics from our structure equations. We discover that some submanifolds have mean curvature in terms of differential invariants of bi-contact structures.

\subsection{Acknowledgements}
The authors have no funding sources to acknowledge. 

\section{The Equivalence Problem}\label{sec-eq-prob}

Here we implement E. Cartan's method of equivalence to study the problem of equivalence of bi-contact structures under bi-contactomorphisms. The uninitiated may consult \cite{CartanBeginners,GardnerEquivalence,OlverEquivalence} for an introduction to this technique.
\subsection{Bi-contact Equivalence}\label{equivalence prob}
 Let $M$ be a three dimensional manifold with coframing $\{\omega^1,\omega^2,\omega^3\}$ such that $\omega^1$ and $\omega^2$ are contact forms that are linearly independent and $\omega^3$ any 1-form such that $\omega^1\wedge \omega^2\wedge \omega^3\neq0$. Thus we should expect that a change of coframe under a bi-contactomorphism should be as follows
\begin{gather*}
 \begin{bmatrix}
 \tilde{\omega}^1\\ \tilde{\omega}^2\\ \tilde{\omega}^3\\ 
 \end{bmatrix}
 =
  \begin{bmatrix}
   a_1 & 0 & 0\\
   0 & a_2 & 0\\
   b_1 & b_2 & b_3\\
  \end{bmatrix}^{-1}
    \begin{bmatrix}
 \omega^1\\ \omega^2\\ \omega^3
 \end{bmatrix}.
\end{gather*}
Let $G_0<GL(3,\field{R})$ be the group given by invertible matrices of the above form. Then the structure equations on the principal $G_0$ bundle $\mcal{B}_0\subset \mcal{F}_{GL(3,\field{R})}$ are
\begin{equation}\label{0-struct-eqs}
 d\begin{bmatrix}
 \omega^1\\ \omega^2\\ \omega^3
 \end{bmatrix}
 =
  -\begin{bmatrix}
   \alpha_1 & 0 & 0\\
   0 & \alpha_2 & 0\\
   \beta_1 & \beta_2 & \beta_3\\
  \end{bmatrix}\wedge
 \begin{bmatrix}
 \omega^1\\ \omega^2\\ \omega^3
 \end{bmatrix}+
\begin{bmatrix}
 T^1_{23}\,\omega^2\wedge\omega^3\\  T^2_{13}\,\omega^1\wedge\omega^3\\ 0
\end{bmatrix}.
\end{equation}
Notice that from these structure equations 
\begin{equation}\label{Contact-condition-adapted-0}
\omega^1\wedge d\omega^1=T^1_{23}\omega^1\wedge \omega^2 \wedge\omega^3\text{ and }\omega^2\wedge d\omega^2=-T^2_{13}\omega^1\wedge \omega^2 \wedge\omega^3.
\end{equation}
The property that $\omega^1$ is adapted to a contact distribution on $M$ is equivalent to the condition that $T^1_{23}$ is never zero on $\mcal{B}_0$. Similarly, $T^2_{13}$ is never zero on $\mcal{B}_0$. We can find a normalization of this torsion by considering the induced action of $G_0$ on the intrinsic torsion. 
Indeed, let us rewrite the structure equations \eqref{0-struct-eqs} as $d\omega=-\theta\wedge\omega+T(\omega\wedge\omega)$ where $\theta$ is the (pseudo)connection form. Now we can check what happens to the intrinsic torsion under the change of coframe defined by $G_0$. Indeed we get that $d(g\,\tilde{\omega})=-\theta g\wedge\tilde{\omega}+T(g\tilde{\omega}\wedge g\tilde{\omega})$ which may be rewritten again as $d\tilde{\omega}=-\tilde{\theta}\wedge \tilde{\omega}+g^{-1}\,T(g\tilde{\omega}\wedge g\tilde{\omega})$ where $\tilde{\theta}=g^{-1}dg+g^{-1}\theta\,g+t(\tilde{\omega})$ where $t(\tilde{\omega})$ is the absorbable torsion of $\tilde{T}$. However, $d\tilde{\omega}=-\tilde{\theta}\wedge\tilde{\omega}+\tilde{T}(\tilde{\omega}\wedge\tilde{\omega})$ so upon comparing intrinsic torsion terms one finds that $\tilde{T}^1_{23}=\frac{a_1}{a_2b_3}\,T^1_{23}$ and $\tilde{T}^2_{13}=\frac{a_2}{a_1b_3}T^2_{13}$. Hence $\tilde{T}^1_{23}\tilde{T}^2_{13}=\frac{1}{b_3^2}T^1_{23}T^2_{13}$ meaning that the sign of $T^1_{23}T^2_{13}$ is fixed. As such, we may normalize the torsion in the following way 
\begin{equation*}
T^1_{23}=1\text{ and }T^2_{13}=\epsilon,
\end{equation*}
which implies that 
\[
T^1_{23}T^2_{13}=\epsilon,
\]
where $\epsilon=\pm1$. This is achieved by taking a reduction of the group $G_0$ so that $a_2=\delta a_1$ and $b_3=\delta$ where $\delta=\pm1$. We now find that the 1-forms $\omega^1$ and $\omega^2$ have the property that
\begin{equation}
\omega^1\wedge d\omega^1=-\epsilon \omega^2\wedge d\omega^2=\omega^1\wedge \omega^2\wedge \omega^3,
\end{equation}
via equations \ref{Contact-condition-adapted-0} and the torsion normalization. 
Let us label $a=a_1$ so that $a_2=\delta  a$. Our reduction of $G_0$ yields a restriction to a subbundle $\mcal{B}_1\hookrightarrow\mcal{B}_0$ with group $G_{1}< G_0$ described by the change of coframe
\begin{equation}\label{1-adapted}
 \begin{bmatrix}
 \tilde{\omega}^1\\ \tilde{\omega}^2\\ \tilde{\omega}^3\\ 
 \end{bmatrix}
 =
  \begin{bmatrix}
   a & 0 & 0\\
   0 & \delta  a & 0\\
   b_1 & b_2 & \delta \\
  \end{bmatrix}^{-1}
    \begin{bmatrix}
 \omega^1\\ \omega^2\\ \omega^3
 \end{bmatrix}.
\end{equation}
Any coframe that transforms according to \eqref{1-adapted} will be called 1-adapted. Notice also that $\mcal{B}_1$ is a 4-fold cover of $M$ from the presence of the sign ambiguities in \eqref{1-adapted}. Specifically, there are four connected components of $\mcal{B}_1$ which arise from combinations of the signs of $a$ and $\delta$. The 1-adapted structure equations are given by
\begin{gather*}
 d\begin{bmatrix}
 \omega^1\\ \omega^2\\ \omega^3
 \end{bmatrix}
 =
  -\begin{bmatrix}
   \alpha & 0 & 0\\
   0 & \alpha & 0\\
   \beta_1 & \beta_2 & 0\\
  \end{bmatrix}\wedge
 \begin{bmatrix}
 \omega^1\\ \omega^2\\ \omega^3
 \end{bmatrix}+
\begin{bmatrix}
T^1_{13}\omega^1\wedge\omega^3+ \omega^2\wedge\omega^3\\  T^2_{23}\omega^2\wedge\omega^3+\epsilon \omega^1\wedge\omega^3\\ 0
\end{bmatrix}.
\end{gather*}
We now have new torsion and notice that $\alpha\mapsto\alpha-\frac{1}{2}(T^1_{13}+T^2_{23})\omega^3$ gives for the torsion components $\frac{1}{2}(T^1_{13}-T^2_{23})\omega^1\wedge\omega^3$ and $-\frac{1}{2}(T^1_{13}-T^2_{23})\omega^2\wedge\omega^3$. Calling $C=-\frac{1}{2}(T^1_{13}-T^2_{23})$ we rewrite the structure equations as
\begin{equation}\label{Struct Eq 1}
 d\begin{bmatrix}
 \omega^1\\ \omega^2\\ \omega^3
 \end{bmatrix}
 =
  -\begin{bmatrix}
   \alpha & 0 & 0\\
   0 & \alpha & 0\\
   \beta_1 & \beta_2 & 0\\
  \end{bmatrix}\wedge
 \begin{bmatrix}
 \omega^1\\ \omega^2\\ \omega^3
 \end{bmatrix}+
\begin{bmatrix}
-C\omega^1\wedge\omega^3+ \omega^2\wedge\omega^3\\  \epsilon \omega^1\wedge\omega^3+ C\omega^2\wedge\omega^3\\ 0
\end{bmatrix}.
\end{equation}
We discover that bi-contactomorphisms are \textit{homotheties} of the bi-contact structure. These homotheties have been mentioned before in \cite{GeigesTautCirc} and \cite{PerroneBiContactHyp}, but only in the context of the torsion function $C=0$.  

Notice that \eqref{Struct Eq 1} have not arisen from a reduction of the structure group $G_1$. Rather, we eliminated the ambiguity in the torsion via torsion absorption. The resulting intrinsic torsion is fundamental to bi-contact structures. 
\begin{prop}
$C$ is an invariant function on $M$.  Moreover, $C$ is the local obstruction for the bi-contact structure to be a taut contact circle or taut contact equilateral hyperbola. 
\end{prop}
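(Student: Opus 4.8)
The plan is to read both assertions directly off the structure equations \eqref{Struct Eq 1} on $\mcal{B}_1$, after one preliminary observation. Forming $\omega^1\wedge d\omega^2+\omega^2\wedge d\omega^1$ from \eqref{Struct Eq 1}, the terms containing the pseudoconnection form $\alpha$ cancel (they differ by a transposition of the outer factors in a wedge of three $1$-forms), leaving the semibasic identity
\[
\omega^1\wedge d\omega^2+\omega^2\wedge d\omega^1 = 2C\,\omega^1\wedge\omega^2\wedge\omega^3 .
\]
Together with $\omega^1\wedge d\omega^1=\omega^1\wedge\omega^2\wedge\omega^3$ and $\omega^2\wedge d\omega^2=-\epsilon\,\omega^1\wedge\omega^2\wedge\omega^3$, already recorded in \eqref{Struct Eq 1}, this realizes $2C$ as the ratio of the globally defined $3$-form $\omega^1\wedge d\omega^2+\omega^2\wedge d\omega^1$ by $\omega^1\wedge d\omega^1$, and it drives everything that follows.

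To see that $C$ descends to a function on $M$, I would check its behaviour under the residual $G_1$-action \eqref{1-adapted}; this is not automatic since $C$ came from torsion absorption rather than a reduction of $G_1$. Substituting $\tilde\omega=g^{-1}\omega$ into the displayed identity, the $da$-terms coming from $d(a^{-1}\omega^i)$ cancel in the antisymmetrized combination, while $\omega^1\wedge\omega^2\wedge\omega^3$ scales by $\det g^{-1}=a^{-2}$; comparing gives $\tilde C=\delta\,C$. Hence $C$ has no dependence on the continuous fiber coordinates $a,b_1,b_2$, so $dC$ is semibasic and $C$ defines a function on $M$ up to the overall sign $\delta=\pm1$, which is trivialized once co-orientations of the two contact structures are fixed. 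Since the passage to $\mcal{B}_1$ and the absorption are natural, a bicontactomorphism lifts to a bundle automorphism intertwining the tautological forms and hence preserves $C$; so $C$ is invariant, and $C\equiv 0$ and $C^2$ are invariant even without fixing co-orientations.

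For the obstruction statement, set $\omega_a=a_1\omega^1+a_2\omega^2$ with $a=(a_1,a_2)$ constant. Expanding $\omega_a\wedge d\omega_a$ and using the three identities above yields
\[
\omega_a\wedge d\omega_a = \left(a_1^2+2C\,a_1a_2-\epsilon\,a_2^2\right)\omega^1\wedge d\omega^1 ,
\]
so the coframe-independent ratio $\omega_a\wedge d\omega_a/\omega^1\wedge d\omega^1$ is the quadratic form $Q(a)=a_1^2+2Ca_1a_2-\epsilon a_2^2$. Now split on $\epsilon$, which is exactly the orientation invariant fixed by the first torsion normalization. If $\epsilon=-1$, parametrize the unit circle by $(a_1,a_2)=(\cos t,\sin t)$ to get $Q=1+C\sin 2t$; this is independent of $t$ at every point of $M$ — equivalently $\omega_a\wedge d\omega_a$ equals the fixed volume form $\omega^1\wedge d\omega^1$ for all $a$ on the circle, i.e. $(\omega^1,\omega^2)$ is a taut contact circle — if and only if $C\equiv 0$. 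If $\epsilon=+1$ one has $\omega^2\wedge d\omega^2=-\omega^1\wedge d\omega^1$, and restricting $a$ to the standard equilateral hyperbola $a_1^2-a_2^2=r$ gives $Q=r+2Ca_1a_2$, so the requirement $\omega_a\wedge d\omega_a=r\,\omega^1\wedge d\omega^1=-r\,\omega^2\wedge d\omega^2$ on that hyperbola is $C\,a_1a_2\equiv 0$ there, i.e. $C\equiv 0$. Conversely, a taut contact circle forces $\epsilon=-1$ (compare $a=(1,0)$ with $a=(0,1)$) and a taut contact equilateral hyperbola forces $\epsilon=+1$, and in either case tautness makes the above $Q$ constant on the relevant curve, hence $C\equiv 0$. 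Thus the bi-contact structure is a taut contact circle (when $\epsilon=-1$) or a taut contact equilateral hyperbola (when $\epsilon=+1$) exactly when $C$ vanishes identically, and since $\epsilon$ is already determined, the single function $C$ is precisely the local obstruction.

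The step I expect to be the main obstacle is the second one: confirming that $C$ is genuinely a function on $M$. Because $C$ lies in the essential, non-absorbable torsion and was not obtained from a group reduction, ruling out a fiber dependence is not formal — one must carry out the transformation computation under $G_1$ and, in particular, keep careful track of the discrete $\delta$-ambiguity and its interplay with co-orientations; the cancellation of the $da$-terms is exactly what makes $C$ descend. Granted the identity for $\omega^1\wedge d\omega^2+\omega^2\wedge d\omega^1$, everything else is a short and mechanical computation.
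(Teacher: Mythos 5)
Your argument is correct, and it differs from the paper's proof in both halves. For the invariance of $C$, the paper argues infinitesimally on $\mcal{B}_1$: expanding $(d^2\omega^1)\wedge\omega^2+(d^2\omega^2)\wedge\omega^1=-2\,dC\wedge\omega^1\wedge\omega^2\wedge\omega^3=0$ shows $dC$ is semibasic, whence $C$ descends. You instead compute the finite $G_1$-equivariance directly from the identity $\omega^1\wedge d\omega^2+\omega^2\wedge d\omega^1=2C\,\omega^1\wedge\omega^2\wedge\omega^3$, obtaining $\tilde C=\delta C$; this is a legitimate alternative, and it is in fact more informative, since $\mcal{B}_1$ has disconnected fibers (four components, from the signs of $a$ and $\delta$), so semibasicity of $dC$ alone only gives constancy on fiber components, and your explicit $\delta$-sign bookkeeping (invariance once co-orientations are fixed, invariance of $C^2$ and of the locus $C=0$ in general) makes precise what the paper leaves implicit. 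For the obstruction, the paper stops at the displayed identity and appeals to the known characterizations of taut contact circles and taut equilateral hyperbolas in Geiges--Gonzalo and Perrone, whereas you reprove that characterization from scratch by expanding $\omega_a\wedge d\omega_a=(a_1^2+2Ca_1a_2-\epsilon a_2^2)\,\omega^1\wedge d\omega^1$ and restricting the quadratic form to the unit circle ($\epsilon=-1$) or the equilateral hyperbola ($\epsilon=+1$); this is self-contained, anticipates the form $\mathcal{P}_C(a)$ that the paper only introduces in the following subsection, and correctly notes that $\epsilon$ (fixed by the first torsion normalization) determines which of the two taut structures can occur, so that $C\equiv 0$ is exactly the remaining obstruction. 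The cancellations you rely on (of the $\alpha$-terms in forming $\omega^1\wedge d\omega^2+\omega^2\wedge d\omega^1$, and of the $da$-terms in the equivariance computation) do hold, so there is no gap; the trade-off is simply that your route is longer but independent of the cited literature and sharper about the discrete ambiguity.
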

\begin{proof}
Notice that 
\begin{equation*}
\begin{aligned}
(d^2\omega^1)\wedge\omega^2+(d^2\omega^2)\wedge\omega^1=-2\,dC\wedge\omega^1\wedge\omega^2\wedge\omega^3,
\end{aligned}
\end{equation*}
and therefore there are functions $C_i$, $i=1,2,3$ on $M$ such that $dC=C_i\omega^i$ and so $C$ is an invariant function on $M$. 
It is then immediate what role the invariant function $C$ plays. Indeed, 
\begin{equation}
\omega^1\wedge d\omega^2+\omega^2\wedge d\omega^1=2C\,\omega^1\wedge\omega^2\wedge\omega^3,
\end{equation}
and so by easy theorems in \cite{GeigesTautCirc,GeigesCirc} and \cite{PerroneBiContactCirc,PerroneBiContactHyp} we have that the invariant function $C$ is an obstruction to being a contact circle or equilateral hyperbola.
\end{proof}

That $\omega^1\wedge d\omega^2+\omega^2\wedge d\omega^1$ vanishing defines contact circles and equilateral hyperbolas is a part of the definition of such structures. What is interesting is that this arises naturally when considering equivalence under bi-contactomorphisms.  
\subsection{The invariant function \texorpdfstring{$C$}{C}}\label{C-section}
We investigate some of the implications of the invariant function $C$ and its exterior derivative $dC$. It naturally leads us to introduce the more general notion of elliptic bi-contact structures and hyperbolic bi-contact structures. Let $(\omega^1,\omega^2)$ be a bi-contact structure on a 3-manifold $M$. Now let $\omega_a=a_1\omega^1+a_2\omega^2$ for some real numbers $a_1$ and $a_2$. In order for $\omega_a$ to be a contact form, we need it to induce a volume form on $M$. From the structure equations \eqref{Struct Eq 1} we have  
\[
\omega_a\wedge d\omega_a=(a_1^2-\epsilon  a_2^2+2Ca_1a_2)\Omega,
\]
where $\Omega=\omega^1\wedge\omega^2\wedge\omega^3=\omega^1\wedge d\omega^1$. Therefore, in order that $\omega_a$ be a contact form, we need simply avoid the zero locus of the quadratic form $\mathcal{P}_C(a):=a_1^2-\epsilon a_2^2+2Ca_1a_2$. 
\begin{defn}\label{elliptic-hyperbolic}
Given a bi-contact structure $(\omega^1,\omega^2)$ on a 3-manifold $M$, if the $(a_1,a_2)$ for $\omega_a$ are chosen in such a way that $\mathcal{P}_C(a)=r$ for $r=\pm1$ then we say the bi-contact structure is \textbf{taut}. Moreover, if $\mathcal{P}_C(a)$ is a definite quadratic form then we call the bi-contact structure \textbf{elliptic}. If $\mathcal{P}_C(a)$ is indefinite then we say the bi-contact structure is \textbf{hyperbolic}. Finally, if $\mathcal{P}_C(a)$ is degenerate then we say that the bi-contact structure is \textbf{linear}. 
\end{defn}
Thus the following proposition is immediate. 
\begin{prop}\label{sig-to-taut}
Let $(\omega^1,\omega^2)$ be a bi-contact structure on 3-manifold $M$. 
\begin{enumerate}
\item If $\epsilon =-1$ and $|C|<1$ then the bi-contact structure is elliptic and admits a taut contact ellipse.
\item If $\epsilon =-1$ and $|C|=1$ then the bi-contact structure is linear and admits a taut contact line. 
\item If $\epsilon =1$ or $|C|>1$ with $\epsilon =-1$ then the bi-contact structure is hyperbolic and admits a taut contact hyperbola $\omega_a$. Additionally, if the taut contact hyperbola is induced by the case $\epsilon =1$ then $\omega^1\wedge d\omega^1=-\omega^2\wedge d\omega^2$ and if the taut contact hyperbola is induced by the case $|C|>1$ and $\epsilon =-1$ then $\omega^1\wedge d\omega^1=\omega^2\wedge d\omega^2$.  
\end{enumerate}
\end{prop}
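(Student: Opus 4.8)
The plan is to read Proposition~\ref{sig-to-taut} as an elementary statement about the real binary quadratic form $\mathcal{P}_C(a)=a_1^2-\epsilon a_2^2+2Ca_1a_2$, and then translate algebraic facts about $\mathcal{P}_C$ into the geometric language of Definition~\ref{elliptic-hyperbolic} using the identity $\omega_a\wedge d\omega_a=\mathcal{P}_C(a)\,\Omega$ established just before the statement. First I would compute the discriminant of $\mathcal{P}_C$. Writing the associated symmetric matrix as $\begin{bmatrix} 1 & C \\ C & -\epsilon \end{bmatrix}$, its determinant is $-\epsilon-C^2$. Thus $\mathcal{P}_C$ is definite exactly when $-\epsilon-C^2>0$, i.e.\ $\epsilon=-1$ and $C^2<1$; it is degenerate exactly when $-\epsilon-C^2=0$, i.e.\ $\epsilon=-1$ and $C^2=1$; and it is indefinite exactly when $-\epsilon-C^2<0$, which happens either when $\epsilon=1$ (any $C$) or when $\epsilon=-1$ and $C^2>1$. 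This immediately yields the elliptic/linear/hyperbolic trichotomy of Definition~\ref{elliptic-hyperbolic} in each of the three cases (1), (2), (3), so the only remaining content is the existence of the taut contact ellipse/line/hyperbola, i.e.\ the claim that $\mathcal{P}_C(a)=r$ has solutions $a\in\mathbb{R}^2$ for the appropriate sign $r=\pm1$, together with the asserted comparisons of volume forms in case (3).

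For case (1), when $\mathcal{P}_C$ is positive definite (note that the coefficient of $a_1^2$ is $+1>0$, so definiteness forces positive definiteness) the level set $\mathcal{P}_C(a)=1$ is a genuine ellipse $E_{S^1}$, nonempty and compact; choosing $(a_1,a_2)$ on it makes $\omega_a$ a contact form with $\omega_a\wedge d\omega_a=\Omega$ constant, so $\omega_a$ is a taut contact ellipse in the sense of the earlier definition (the level $r=+1$ reproduces the reference volume form $\Omega=\omega^1\wedge d\omega^1=-\epsilon\,\omega^2\wedge d\omega^2=\omega^2\wedge d\omega^2$, consistent with tautness). For case (2), $\mathcal{P}_C$ is a perfect square: with $\epsilon=-1$ and $C=\pm 1$ one has $\mathcal{P}_C(a)=a_1^2+a_2^2\pm 2a_1a_2=(a_1\pm a_2)^2$, whose level set $\mathcal{P}_C(a)=1$ is the pair of parallel lines $a_1\pm a_2=\pm 1$, a ``taut contact line'' as named. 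For case (3), $\mathcal{P}_C$ is indefinite, so the level sets $\mathcal{P}_C(a)=+1$ and $\mathcal{P}_C(a)=-1$ are both nonempty (an indefinite binary form represents both signs), each being a hyperbola $H^1_r$; picking $a$ on the branch $H^1_r$ gives $\omega_a\wedge d\omega_a=r\,\Omega$, a taut contact hyperbola. The volume-form comparison follows from $\Omega=\omega^1\wedge d\omega^1=-\epsilon\,\omega^2\wedge d\omega^2$: if the hyperbolicity comes from $\epsilon=1$ then $\omega^1\wedge d\omega^1=-\omega^2\wedge d\omega^2$, while if it comes from $\epsilon=-1$ (with $|C|>1$) then $\omega^1\wedge d\omega^1=+\omega^2\wedge d\omega^2$, as claimed.

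I do not anticipate a genuine obstacle here — the proposition is essentially a dictionary entry — but the one point that needs care is making sure the three cases (1), (2), (3) are exhaustive and mutually exclusive as stated, i.e.\ that the conditions ``$\epsilon=-1,\ |C|<1$'', ``$\epsilon=-1,\ |C|=1$'', and ``$\epsilon=1$, or ($\epsilon=-1$ and $|C|>1$)'' partition the parameter range $\epsilon\in\{\pm1\}$, $C\in\mathbb{R}$. This is clear once phrased via the sign of the discriminant $-\epsilon-C^2$, so I would organize the write-up around that single quantity: compute it, split into the three sign cases, identify the conic type, exhibit the nonempty level set $\{\mathcal{P}_C=r\}$ with the right $r$, and in the hyperbolic case read off the volume-form identity from $\Omega=\omega^1\wedge d\omega^1=-\epsilon\,\omega^2\wedge d\omega^2$. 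A minor subtlety worth a sentence is that in the elliptic case definiteness is automatically \emph{positive} definiteness (because the $a_1^2$-coefficient is $+1$), which is why the taut level is $r=+1$ and not $r=-1$ there.
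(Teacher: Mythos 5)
Your proof is correct and is exactly the reasoning the paper leaves implicit: Proposition~\ref{sig-to-taut} is presented as ``immediate'' from the identity $\omega_a\wedge d\omega_a=\mathcal{P}_C(a)\,\Omega$ together with Definition~\ref{elliptic-hyperbolic}, and your organization around the discriminant $-\epsilon-C^2$ of $\mathcal{P}_C$, plus the normalization $\omega^1\wedge d\omega^1=-\epsilon\,\omega^2\wedge d\omega^2=\Omega$ for the sign statements in case (3), is precisely what that immediacy unpacks to. No gaps; your note that definiteness is automatically positive definiteness (leading coefficient $+1$) is the right small point to make explicit.
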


Moreover, it is interesting to note that we need not restrict ourselves entirely to the case that $(a_1,a_2)$ are constant on $M$. 
\begin{prop}\label{var-quadratic-form}
Let $\omega^3$ be a 1-form completing a bi-contact structure $(\omega^1,\omega^2)$ on a 3-manifold $M$ to a local co-framing. Then contact circles and contact hyperbolas may be parameterized by two functions $a_1,a_2 \in C^2(M)$ such that 
\[
a_1a_{2,3}-a_2a_{1,3}=0
\] 
where $a_{i,3}$ are the covariant derivatives of $a_i$ in the direction $e_3$ which is dual to $\omega^3$. Moreover, if $\omega^3$ locally defines a foliation of $M$ then the ratio $a_1/a_2$ is a first integral of $e_3$. 
\end{prop}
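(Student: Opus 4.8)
The plan is to obtain the result by a direct computation of $\omega_a\wedge d\omega_a$ for $\omega_a=a_1\omega^1+a_2\omega^2$ with $a_1,a_2\in C^2(M)$, using the $1$-adapted structure equations \eqref{Struct Eq 1}. Write $da_i=a_{i,1}\,\omega^1+a_{i,2}\,\omega^2+a_{i,3}\,\omega^3$, so that $a_{i,3}=e_3(a_i)$ for the frame $\{e_1,e_2,e_3\}$ dual to $\{\omega^1,\omega^2,\omega^3\}$. Then
\[
d\omega_a=da_1\wedge\omega^1+da_2\wedge\omega^2+a_1\,d\omega^1+a_2\,d\omega^2,
\]
and upon substituting \eqref{Struct Eq 1} the pseudoconnection $\alpha$ collects into a single term $-\alpha\wedge\omega_a$, while $\beta_1,\beta_2$ never appear since they occur only in $d\omega^3$. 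Because $\omega_a\wedge(\alpha\wedge\omega_a)=0$, the connection forms drop out of $\omega_a\wedge d\omega_a$; the torsion terms of \eqref{Struct Eq 1} reproduce the quadratic form $\mathcal{P}_C(a)=a_1^2-\epsilon a_2^2+2Ca_1a_2$ exactly as in the constant-coefficient case, and the only new contribution is $\omega_a\wedge(da_1\wedge\omega^1+da_2\wedge\omega^2)$, whose sole surviving piece involves the $\omega^3$-components $a_{1,3},a_{2,3}$. The outcome is
\[
\omega_a\wedge d\omega_a=\bigl(\mathcal{P}_C(a)-(a_1a_{2,3}-a_2a_{1,3})\bigr)\,\omega^1\wedge\omega^2\wedge\omega^3 .
\]

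The next step is to read off from this the condition that $\omega_a$ define a contact circle or contact hyperbola in the sense of Definition \ref{elliptic-hyperbolic}, now with $a_1,a_2$ allowed to vary. The defining feature of the constant case is that $\omega_a\wedge d\omega_a$ is governed solely by $\mathcal{P}_C(a)$ — so that $\omega_a$ is a contact form precisely off the conic $\mathcal{P}_C(a)=0$, and the loci $\mathcal{P}_C(a)=\pm1$ give the (taut) contact circles and hyperbolas. For the variable family to retain this property, the correction term above must vanish identically, i.e. $a_1a_{2,3}-a_2a_{1,3}\equiv0$ on $M$, which is the asserted constraint. I expect this middle step — pinning down the right notion of ``variable-coefficient contact circle/hyperbola'' so that the vanishing of the $\omega^3$-defect is exactly the distinguishing condition — to be the only genuinely delicate point; the differential-forms computation is routine once the structure equations are in hand, the main care being bookkeeping of signs.

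For the last statement, assume $a_1a_{2,3}-a_2a_{1,3}=0$. On the open set where $a_2\neq0$ the quotient rule gives
\[
e_3\!\left(\frac{a_1}{a_2}\right)=\frac{a_{1,3}\,a_2-a_1\,a_{2,3}}{a_2^2}=0,
\]
and symmetrically $e_3(a_2/a_1)=0$ where $a_1\neq0$ (the two functions cannot vanish simultaneously, as $\omega_a$ is a nonzero contact form). Hence $a_1/a_2$ is constant along every integral curve of $e_3$; that is, $a_1/a_2$ is a first integral of $e_3$. When $\omega^3$ locally defines a codimension-one foliation, $e_3$ is transverse to its leaves, so the statement identifies $a_1/a_2$ as a function constant along the transverse $e_3$-direction of that foliation. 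This final part presents no difficulty beyond the quotient rule.
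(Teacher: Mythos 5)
Your proposal is correct and follows essentially the same route as the paper: expand $\omega_a\wedge d\omega_a$ via the $1$-adapted structure equations, observe that the connection term drops out leaving $\mathcal{P}_C(a)\,\Omega$ plus the $\omega^3$-defect $-(a_1a_{2,3}-a_2a_{1,3})\Omega$, and conclude with the quotient rule for $a_1/a_2$. The only difference is that you spell out the vanishing of the $\alpha$-term and the sign bookkeeping that the paper leaves as "routine," which is a faithful filling-in rather than a new argument.
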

\begin{proof}
Let $\omega_a=a_1\omega^1+a_2\omega^2$. Then 
\[
\omega_a\wedge d\omega_a = \mcal{P}_C(a)\Omega+\omega_a\wedge (da_1\wedge \omega^1+da_2\wedge \omega^2), 
\]
where $\Omega = \omega^1\wedge \omega^2 \wedge \omega^3$. Expanding the $da_i$ terms and computing the wedge product immediately yields the claimed differential equation for $a_1$ and $a_2$. Note also that 
\[
\left(\frac{a_1}{a_2}\right)_3 = \frac{a_1a_{2,3}-a_2a_{1,3}}{a_2^2},
\]
and so when $\omega^3$ defines a foliation we have that $a_1/a_2$ is constant along integral curves of $e_3$. 
\end{proof}
The invariant function $C$ is the first obstruction of a \textit{given} 1-adapted (i.e. the volume forms are fixed) bi-contact structure to admit taut contact circles or taut equilateral hyperbolas. However, as Theorem \ref{unit-taut} below demonstrates, the derivative of $C$ is the true obstruction to the existence of taut contact circles and taut contact equilateral hyperbolas outside of the homothety/bi-contactomorphism class of a given bi-contact structure. 
\begin{prop}\label{unit-taut}
Let $(\omega^1,\omega^2)$ be a bi-contact structure on 3-manifold $M$ such that $dC\wedge \omega^1\wedge \omega^2 =0$ on $M$. If the bi-contact structure is elliptic then $M$ admits a taut contact circle that is not bi-contactomorphic to $(\omega^1,\omega^2)$. If the bi-contact structure is hyperbolic then $M$ admits a taut contact equilateral hyperbola that is not bi-contactomorphic to $(\omega^1,\omega^2)$. 
\end{prop}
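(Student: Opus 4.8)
The plan is to produce the required taut structure by a pointwise linear reparametrization of the conic of contact forms that the given bi-contact structure already carries. From the structure equations \eqref{Struct Eq 1} one sees that a 1-adapted bi-contact structure is a taut contact circle exactly when $\epsilon=-1$ and $C\equiv 0$ (then $\omega_a\wedge d\omega_a=(a_1^2+a_2^2)\,\Omega$ is constant along $S^1$), and a taut contact equilateral hyperbola exactly when $\epsilon=+1$ and $C\equiv 0$ (then $\omega_a\wedge d\omega_a=(a_1^2-a_2^2)\,\Omega$). So the goal is to pass from $(\omega^1,\omega^2)$ to a new bi-contact structure $(\bar\omega^1,\bar\omega^2)$ whose invariant $\bar C$ vanishes identically. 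By Proposition \ref{sig-to-taut} the elliptic (resp.\ hyperbolic) structure already admits a taut contact ellipse (resp.\ hyperbola) $\{\omega_a:\mcal{P}_C(a)=r\}$; the idea is to reparametrize this conic by the standard unit circle (resp.\ the standard equilateral hyperbola $a_1^2-a_2^2=\pm1$) through a linear substitution diagonalizing $\mcal{P}_C$, and to take $\bar\omega^1,\bar\omega^2$ to be the images of the standard basis vectors. Completing the square, $\mcal{P}_C(a)=a_1^2-\epsilon a_2^2+2Ca_1a_2=(a_1+Ca_2)^2+(-\epsilon-C^2)a_2^2$, which (since $\mathrm{sgn}(-\epsilon-C^2)$ is $+1$ in the elliptic case and $-1$ in both hyperbolic subcases) leads to the explicit candidate
\[
\bar\omega^1=\omega^1,\qquad \bar\omega^2=\frac{\omega^2-C\,\omega^1}{\sqrt{\,|\,\epsilon+C^2\,|\,}}.
\]
This is defined and nowhere zero in the elliptic and hyperbolic cases (it degenerates only on the linear locus $\epsilon=-1$, $C^2=1$, which is excluded), and $\bar\omega^1\wedge\bar\omega^2=|\epsilon+C^2|^{-1/2}\,\omega^1\wedge\omega^2\neq 0$, so $(\bar\omega^1,\bar\omega^2)$ is again a bi-contact structure.

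Next I would verify directly that $(\bar\omega^1,\bar\omega^2)$ is taut. For constant $a=(a_1,a_2)$ write $\bar\omega_a=a_1\bar\omega^1+a_2\bar\omega^2=P\,\omega^1+Q\,\omega^2$, where $P$ and $Q$ are functions on $M$ only through $C$. Expanding $\bar\omega_a\wedge d\bar\omega_a$ using \eqref{Struct Eq 1} produces the quadratic-form term $\mcal{P}_C(P,Q)\,\Omega$ together with correction terms of the shape $(Q\,P_3-P\,Q_3)\,\Omega$, where subscript $3$ is the $e_3$-derivative (equivalently, the coefficient of $\omega^3$). Here the hypothesis enters decisively: $dC\wedge\omega^1\wedge\omega^2=0$ is equivalent to $C_3=0$, i.e.\ $C$ is a first integral of $e_3$; since $P$ and $Q$ depend on the point of $M$ only through $C$, their $e_3$-derivatives vanish and all correction terms drop out. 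One is left with $\bar\omega_a\wedge d\bar\omega_a=\mcal{P}_C(P,Q)\,\Omega$, and bilinearity together with $\mcal{P}_C(1,0)=1$, $\mcal{P}_C\!\left(\tfrac{-C}{\sqrt{|\epsilon+C^2|}},\tfrac{1}{\sqrt{|\epsilon+C^2|}}\right)=-\mathrm{sgn}(\epsilon+C^2)$, and the mixed term being $0$, gives $\bar\omega_a\wedge d\bar\omega_a=(a_1^2+a_2^2)\,\Omega$ in the elliptic case and $(a_1^2-a_2^2)\,\Omega$ in the hyperbolic case. In particular each $\bar\omega^i$ is a contact form, $(\bar\omega^1,\bar\omega^2,\sqrt{|\epsilon+C^2|}\,\omega^3)$ is 1-adapted with new invariants $\bar\epsilon=-1,\ \bar C=0$ (elliptic) resp.\ $\bar\epsilon=+1,\ \bar C=0$ (hyperbolic), and $\bar\omega_a\wedge d\bar\omega_a$ is independent of $a$ along the unit circle, resp.\ along each branch of $a_1^2-a_2^2=\pm1$. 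Thus $(\bar\omega^1,\bar\omega^2)$ is a taut contact circle, resp.\ a taut contact equilateral hyperbola.

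Finally, the new structure is not bi-contactomorphic to $(\omega^1,\omega^2)$: since $C$ is a bi-contactomorphism invariant (as established above) and $\bar C\equiv 0$ while $C\not\equiv 0$, a bi-contactomorphism carrying one to the other would force $C\equiv 0$, a contradiction. (In the degenerate case $C\equiv 0$ the given structure is already a taut contact circle, resp.\ a taut contact equilateral hyperbola, and the displayed $\bar\omega^i$ return the original.)

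The main obstacle is the bookkeeping around the correction terms: rescaling a contact form by a non-constant factor generically destroys tautness through the $d(\text{factor})$ contributions to $\omega_a\wedge d\omega_a$, and the entire content of the proposition is that these contributions vanish precisely because $C$ — and hence the diagonalizing substitution built from it — is constant along $e_3$; this is why $dC\wedge\omega^1\wedge\omega^2$, rather than $C$ itself, is the true obstruction. A secondary point is checking uniformly across the three regimes ($\epsilon=+1$; $\epsilon=-1,\ |C|<1$; $\epsilon=-1,\ |C|>1$) that the completed square yields an invertible substitution and that the new sign $\bar\epsilon$ comes out as stated — in particular that the hyperbolic subcase $\epsilon=-1,\ |C|>1$ still produces $\bar\epsilon=+1$, so that the resulting hyperbola is genuinely equilateral even though the original one was not.
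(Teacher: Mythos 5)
Your construction is correct, and it rests on the same underlying strategy as the paper's proof — replace $(\omega^1,\omega^2)$ by an explicit $C$-dependent linear combination that diagonalizes $\mathcal{P}_C$, and observe that the only obstruction to tautness of the new pencil is $C_3$ — but your implementation is genuinely different in its details. The paper splits into cases: for $\epsilon=-1$ it takes the sum/difference forms $\eta^{1,2}=(\omega^1\pm\omega^2)/\sqrt{2|1\pm C|}$, for $\epsilon=+1$ a hyperbolic rotation with $C=\sinh\theta$, and in each case it computes $d\eta^i$ and all wedge products explicitly, so the failure of tautness appears as an explicit multiple of $C_3$ (which the paper then also uses to discuss what happens when $C_3\neq0$). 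You instead complete the square, taking $\bar\omega^1=\omega^1$ and $\bar\omega^2=(\omega^2-C\omega^1)/\sqrt{|\epsilon+C^2|}$, which covers all three regimes (elliptic; $\epsilon=+1$; $\epsilon=-1$, $|C|>1$) with a single formula, and you dispose of the derivative corrections abstractly: since the coefficients are functions of $C$ alone, the correction term $(QP_3-PQ_3)\Omega$ from the computation behind Proposition \ref{var-quadratic-form} vanishes when $C_3=0$. The sign bookkeeping checks out: $\mathcal{P}_C(1,0)=1$, $\mathcal{P}_C\bigl(-C/\sqrt{|\epsilon+C^2|},\,1/\sqrt{|\epsilon+C^2|}\bigr)=-\mathrm{sgn}(\epsilon+C^2)$, and the vanishing polarization $P^{(2)}+CQ^{(2)}=0$ give $\bar C=0$ with $\bar\epsilon=-1$ in the elliptic case and $\bar\epsilon=+1$ in both hyperbolic subcases, so the resulting hyperbola is indeed equilateral. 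What your route buys is uniformity and a much shorter verification; what the paper's buys is the explicit $C_3$-dependence of the defect. You also handle the non-equivalence clause explicitly via the invariance of $C$ (comparing $\bar C\equiv0$ with $C\not\equiv0$), which the paper's proof leaves implicit, and you correctly flag that when $C\equiv0$ your substitution returns the original structure, so that clause only has content for $C\not\equiv0$.
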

\begin{proof}
We construct such taut bi-contact structures directly in terms of the given bi-contact structure as though we were diagonalizing the quadratic form $\mathcal{P}_C(a)$. Let us first consider the case of $\epsilon =-1$. Indeed, let
\begin{equation}\label{taut-trans}
\begin{aligned}
\eta^1&=\frac{1}{\sqrt{|1+C|}}\frac{(\omega^1+\omega^2)}{\sqrt{2}},\\
\eta^2&=\frac{1}{\sqrt{|1-C|}}\frac{(\omega^1-\omega^2)}{\sqrt{2}}. 
\end{aligned}
\end{equation}
First note that 
\[
\eta^1\wedge\eta^2=-\frac{1}{\sqrt{|1-C^2|}}\omega^1\wedge\omega^2,
\]
and therefore to have $(\eta^1, \eta^2)$ to be a 1-adapted bi-contact structure, we need to define $\eta^3=-\sqrt{|1-C^2|}\,\,\omega^3$ so that the volume forms from $(\eta^1,\eta^2)$ and $(\omega^1,\omega^2)$ agree. Now we need simply check that the 1-forms $\eta^1$ and $\eta^2$ define taut contact circles for $|C|<1$ and taut contact equilateral hyperbolas when $|C|>1$. Routine calculation yields the following 
\begin{equation*}
\begin{aligned}
d\eta^1&=\frac{1}{\sqrt{|1+C|}}\frac{d\omega^1+d\omega^2}{\sqrt{2}}+\frac{\text{sgn}(1+C)}{2}\frac{dC}{|1+C|^{3/2}}\wedge \frac{(\omega^1+\omega^2)}{\sqrt{2}},\\
d\eta^2&=\frac{1}{\sqrt{|1-C|}}\frac{d\omega^1-d\omega^2}{\sqrt{2}}-\frac{\text{sgn}(1-C)}{2}\frac{dC}{|1-C|^{3/2}}\wedge \frac{(\omega^1-\omega^2)}{\sqrt{2}},\\
\eta^1\wedge d\eta^1&=\frac{1}{2|1+C|}(\omega^1\wedge d\omega^1+\omega^1\wedge d\omega^2+\omega^2 \wedge d\omega^1+\omega^2 \wedge d\omega^2),\\
\,&=\frac{1+C}{|1+C|}\Omega=\text{sgn}(1+C)\Omega,\\
\eta^2\wedge d\eta^2&=\frac{1}{2|1-C|}(\omega^1\wedge d\omega^1-\omega^1\wedge d\omega^2-\omega^2 \wedge d\omega^1+\omega^2 \wedge d\omega^2),\\
\,&=\frac{1-C}{|1-C|}\Omega=\text{sgn}(1-C)\Omega,\\
\eta^1\wedge d\eta^2&=-\frac{1}{2|1-C^2|}\left(\omega^1\wedge d\omega^2-\omega^2\wedge d\omega^1\right)+\frac{1}{4}\frac{\text{sgn}(1-C)dC}{|1-C|\sqrt{|1-C^2|}}\wedge \omega^1\wedge \omega^2,\\
\eta^2\wedge d\eta^1&=\frac{1}{2|1-C^2|}\left(\omega^1\wedge d\omega^2-\omega^2\wedge d\omega^1\right)+\frac{1}{4}\frac{\text{sgn}(1+C)dC}{|1+C|\sqrt{|1-C^2|}}\wedge \omega^1\wedge \omega^2. 
\end{aligned}
\end{equation*}
As such, we find that 
\[
\eta^1\wedge d\eta^2+\eta^2\wedge d\eta^1=C_3\,\frac{\text{sgn}(1-C)|1+C|+\text{sgn}(1+C)|1-C|}{2(|1-C^2|)^{3/2}}\,\Omega
\]
and hence if $(a_1,a_2)\in S^1\subset \mathbb{R}^2$ with $|C|<1$ for $\eta_a=a_1\eta^1+a_2\eta^2$ then 
\begin{equation*}
\begin{aligned}
\eta_a\wedge d\eta_a=\left(1+\frac{a_1a_2\,C_3}{(1-C^2)^{3/2}}\right)\Omega. 
\end{aligned}
\end{equation*}
In case $(a_1,a_2)\in H^1_{r}$ with $|C|>1$ then 
\begin{equation*}
\begin{aligned}
\eta_a\wedge d\eta_a=\left(1-\frac{a_1a_2\,C_3}{(C^2-1)^{3/2}}\right)\Omega. 
\end{aligned}
\end{equation*}
In both cases we see that if $C_3=0$ on $M$, equivalently $dC\wedge\omega^1\wedge\omega^2=0$, then the contact forms $\eta^1$ and $\eta^2$ form a contact circle or a contact equilateral hyperbola. 
To handle the remaining hyperbolic case, we will apply the same argument, but with different 1-forms $\eta^1$ and $\eta^2$. In what follows, it is slightly more convenient to express $C=\sinh(\theta)$ and work with hyperbolic/exponential functions of some $\theta\in C^\infty(M)$. Let 
\begin{equation*}
\begin{aligned}
\eta^1&=\frac{\cosh\left(\frac{\theta}{2}\right)\omega^1+\sinh\left(\frac{\theta}{2}\right)\omega^2}{\cosh(\theta)},\\
\eta^2&=\frac{-\sinh\left(\frac{\theta}{2}\right)\omega^1+\cosh\left(\frac{\theta}{2}\right)\omega^2}{\cosh(\theta)}.
\end{aligned}
\end{equation*}
Then straightforward calculations yield
\begin{equation*}
\begin{aligned}
d\eta^1&=\frac{\cosh\left(\frac{\theta}{2}\right)d\omega^1+\sinh\left(\frac{\theta}{2}\right)d\omega^2}{\cosh(\theta)}-\frac{\left((\cosh(\theta)+2)\sinh\left(\frac{\theta}{2}\right)\omega^1+(\cosh(\theta)-2)\cosh\left(\frac{\theta}{2}\right)\omega^2 \right)\wedge d\theta}{2\cosh^2(\theta)},\\
d\eta^2&=\frac{-\sinh\left(\frac{\theta}{2}\right)d\omega^1+\cosh\left(\frac{\theta}{2}\right)d\omega^2}{\cosh(\theta)}+\frac{\left((\cosh(\theta)-2)\cosh\left(\frac{\theta}{2}\right)\omega^1-(\cosh(\theta)+2)\sinh\left(\frac{\theta}{2}\right)\omega^2 \right)\wedge d\theta}{2\cosh^2(\theta)},
\end{aligned}
\end{equation*}
and therefore
\begin{equation*}
\begin{aligned}
\eta^1\wedge d\eta^1&=\Omega-\frac{\omega^1\wedge\omega^2 \wedge d\theta}{2\cosh^2(\theta)},\\
\eta^2\wedge d\eta^2&=-\Omega-\frac{\omega^1\wedge\omega^2 \wedge d\theta}{2\cosh^2(\theta)},\\
\eta^1\wedge d\eta^2&=\frac{\omega^2\wedge d\omega^1-\omega^1\wedge d\omega^2}{2\cosh(\theta)}-\frac{1}{2}\text{sech}(\theta)\text{tanh}(\theta)\omega^1\wedge \omega^2\wedge d\theta,\\
\eta^2\wedge d\eta^1&=-\frac{\omega^2\wedge d\omega^1-\omega^1\wedge d\omega^2}{2\cosh(\theta)}+\frac{1}{2}\text{sech}(\theta)\text{tanh}(\theta)\omega^1\wedge \omega^2\wedge d\theta.
\end{aligned}
\end{equation*}
Interestingly $\eta^1$ and $\eta^2$ can be used to define a contact equilateral hyperbola, but not generally taut. Since $dC = \cosh(\theta)d\theta$ it is then immediately clear that $\eta^1$ and $\eta^2$ form a taut contact equilateral hyperbola if and only if $dC\wedge \omega^1\wedge \omega^2=0$ on all of $M$.
\end{proof}

\begin{cor-proof}\label{cor-proof}
Let $M$ be a compact orientable 3-manifold and let $(M,\omega^1,\omega^2)$ be a bi-contact structure such that $dC\wedge \omega^1\wedge \omega^2=0$ everywhere on $M$. If $(\omega^1,\omega^2)$ is elliptic then $M$ is diffeomorphic to the quotient of a Lie group G by a discrete subgroup acting by left multiplication where $G=SU(2),\widetilde{SL}(2,\mathbb{R}),$ or $\widetilde{E}(2)$ where $\tilde{\cdot}$ denotes the universal cover. Moreover, any such 3-manifold $M$ realizes a distinct elliptic bi-contactomorphism class for every $C\in C^1(M)$ such that $dC\wedge \omega^1\wedge \omega^2=0$ everywhere on $M$. 
\end{cor-proof}

\begin{proof}
Note that the transformation \eqref{taut-trans} is well defined on all of $M$ and is of full rank for any $C\in C^1(M)$ such that $dC\wedge\omega^1\wedge \omega^2=0$. So we may always induce a taut contact circle on $M$. Then by the main result of \cite{GeigesTautCirc} -i.e. the classification of compact 3-manifolds admitting taut contact circles- in conjunction with $C$ being an invariant under bi-contactomorphism, the Corollary follows. 
\end{proof}
Before returning to the equivalence problem, we provide a local example on some $\mathbb{R}^3$ of a bi-contact structure (both hyperbolic and elliptic depending on the value of $z$ and $\epsilon $) with $dC\wedge \omega^1\wedge\omega^2\neq 0$. This demonstrates that there exist bi-contact structures that are beyond the scope of Corollary \ref{cor-proof}. 
\begin{ex}\label{hyp-ex-C3-nonzero}
Let $M=\mathbb{R}^3$ with coordinates $(x,y,z)$ and let $C=z$ with $C_3=C_3(z)$ arbitrary as a non-zero function of $z$. Note that this means $dC\wedge dC_3=0$. 
\begin{equation}
\begin{aligned}
\omega^1&=dx+dy-\frac{q_1(x,y,z)}{C_3(z)}dz,\\
\omega^2&=dx-dy+\frac{q_2(x,y,z)}{C_3(z)}dz,\\
\omega^3&=\frac{dz}{C_3(z)},
\end{aligned}
\end{equation}
where
\begin{equation}
\begin{aligned}
q_1(x,y,z)&=(x+y)z-(x-y),\\
q_2(x,y,z)&=\epsilon (x+y)+(x-y)z.
\end{aligned}    
\end{equation}
\end{ex}
It is the case that 
\[
\omega^1\wedge d\omega^2+\omega^2\wedge d\omega^1=2z\omega^1\wedge\omega^2\wedge\omega^3.
\]
Matters being so, we find
\[
dC\wedge \omega^1\wedge\omega^2 = C_3(z)\omega^1\wedge\omega^2\wedge\omega^3,
\]
which is nonzero. Curiously, this gives an example of what one might call a \textit{mixed} bi-contact structure in that for $\epsilon=-1$ and $|z|<1$ it is a contact ellipse but when $|z|>1$ it is a contact hyperbola. Such structures warrant additional investigation which we do not pursue here. 

\subsection{Non-Constant C} 

Returning to the equivalence problem, it is easy to see that the 1-form $\alpha$ is unique since any other choice $\alpha'$ would give $(\alpha-\alpha')\wedge \omega^1=0$ and $(\alpha-\alpha')\wedge\omega^2=0$. However, Cartan's lemma gives 
\begin{equation}
\begin{bmatrix}\beta_1\\ \beta_2\end{bmatrix}=\begin{bmatrix} \beta'_1\\ \beta'_2 \end{bmatrix}+\begin{bmatrix} h_{11} & h_{12}\\ h_{12} & h_{22} \end{bmatrix}\begin{bmatrix} \omega_1\\ \omega_2 \end{bmatrix}. 
\end{equation}
It is then immediate that the tableaux (call it $A$) is not involutive since $\dim(\pr{A}{1})=3$ but the non-zero Cartan characters are $s_1=2$ and $s_2=1$. 

At this stage, we may either prolong to a larger bundle where the tautological 1-form includes the Lie algebra 1-forms or we may seek additional reduction of the structure group by investigating the action on the 1-jet of the invariant function $C$. The second option turns out to save us some effort for non-constant $C$ and is more in spirit with the work of E. Cartan's 1908 paper \cite{Cartan1908}. Moreover, thanks to Theorem \ref{unit-taut}, we have direct geometric interpretations for the invariant $C_3$ so that understanding the action on $dC$ provides us with additional insight. 

We now check the action of the group $G_1$ on $dC$, where
\begin{equation*}
dC=C_1\tilde{\omega}^1+C_2\tilde{\omega}^2+C_3\tilde{\omega}^3. 
\end{equation*}
We can easily see that under a change of coframe 
\begin{equation*}
dC=\left(\frac{C_1}{a}-\frac{ b_1 C_3}{a}\right)\omega^1+\left(\frac{ C_2}{\delta  a}-\frac{b_2 C_3}{\delta  a}\right)\omega^2+ C_3\omega^3.
\end{equation*}
We now consider the different possibilities of each $C_i$ for $1\leq i\leq 3$ and their associated reductions of the structure group.
\begin{enumerate}
\item \textbf{Case 1: $dC\wedge \omega^1\wedge\omega^2=0$ ($C_3 = 0$)}. In this scenario, we are left with the action of $G_1$ on $dC$ as 
\begin{equation*}
dC=\frac{C_1}{a}\omega^1+\frac{C_2}{\delta  a}\omega^2.
\end{equation*}
In addition to reducing the group parameter $a$, we also discover that we may normalize
\[
C_1^2+C_2^2=1,
\] 
and as such we may take $C_1=\cos(\xi)$ and $C_2=\delta  \sin(\xi)$ for some function $\xi$ on $M$. However, the group parameters $b_1$ and $b_2$ are still undetermined. In fact, the tableaux for the new structure equations is easily seen to be involutive. That said, we have introduced additional torsion into the structure equations. Indeed, 
\[
\alpha = A_1\omega^1+A_2\omega^2+A_3\omega^3,
\]
so that our new structure equations take the form
\begin{equation*}
\begin{aligned}
d\omega^1&=A_2\omega^1\wedge \omega^2+(A_3-C)\omega^1\wedge\omega^3+\omega^2\wedge\omega^3,\\
d\omega^2&=-A_1\omega^1\wedge\omega^2+\epsilon  \omega^1\wedge\omega^3+(A_3+C)\omega^2\wedge\omega^3,\\
d\omega^3&=-\beta_1\wedge \omega^1-\beta_2\wedge \omega^2.
\end{aligned}
\end{equation*}
Now we look to reduce again by normalizing the new torsion. The action of the current group on this torsion is 
\begin{equation*}
\begin{aligned}
A_1&\mapsto A_1+b_2-(A_3+C)b_1,\\
A_2&\mapsto A_2+b_1-(A_3-C)b_2,
\end{aligned}
\end{equation*}
Thus, we can translate away the torsion terms $A_1$ and $A_2$ and reduce the group again to achieve an $e$-structure as long as $A_3^2-C^2\neq 1$ (we do not explore the alternative case). Upon this reduction the forms $\beta_1$ and $\beta_2$ become basic so that
\begin{equation*}
\begin{aligned}
\beta_1&=B_5\omega^1+\frac{1}{2}(B_3+B_4)\omega^2+B_2\omega^3,\\
\beta_2&=\frac{1}{2}(B_4-B_3)\omega^1+B_6\omega^2+B_1\omega^3,
\end{aligned}
\end{equation*}
where we have chosen the labeling of the $B$ coefficients for simplicity of presentation in the structure equations below
\begin{equation}
\begin{aligned}
d\omega^1&=(A_3-C)\omega^1\wedge\omega^3+\omega^2\wedge\omega^3,\\
d\omega^2&=\epsilon  \omega^1\wedge\omega^3+(A_3+C)\omega^2\wedge\omega^3,\\
d\omega^3&=B_3 \omega^1\wedge\omega^2+B_2\omega^1\wedge \omega^3+B_1\omega^2 \wedge \omega^3.
\end{aligned}
\end{equation}
We have now achieved an $e$-structure, but with complicated orbits, and equivalences determined by all the remaining torsion. 
\item \textbf{Case 2: $dC\wedge\omega^1\wedge \omega^2\neq0$ and $dC\wedge dC_3\neq0$}. In this scenario, we are left with the action of $G_1$ on $dC$ as 
\begin{equation*}
dC=-\frac{C_1-b_1 C_3}{a}\omega^1-\frac{C_2-b_2 C_3}{\delta a}\omega^2+ C_3\omega^3.
\end{equation*}
We see immediately that we can adapt our coframe so that we have the group reduction $b_1=b_2=0$ by normalizing the $C_1=C_2=0$. We are still left with one group parameter and the 1-forms $\beta_1$ and $\beta_2$ become basic, so that 
\begin{equation*}
\begin{aligned}
\beta_1&=B_5\omega^1+\frac{1}{2}(B_3+B_4)\omega^2+B_2\omega^3,\\
\beta_2&=\frac{1}{2}(B_4-B_3)\omega^1+B_6\omega^2+B_1\omega^3,
\end{aligned}
\end{equation*}
where we have chosen the labeling of the $B$ coefficients for simplicity of presentation in the structure equations below,
\begin{equation*}
\begin{aligned}
d\omega^1&=-\alpha\wedge \omega^1-C\omega^1\wedge\omega^3+\omega^2\wedge\omega^3,\\
d\omega^2&=-\alpha\wedge \omega^2+\epsilon \omega^1\wedge\omega^3+C\omega^2\wedge\omega^3,\\
d\omega^3&=B_3\omega^1\wedge\omega^2+B_2\omega^1\wedge\omega^3+B_1\omega^2\wedge\omega^3.
\end{aligned}
\end{equation*}
As such, we have reduced the group $G_1$ to $G_2$ and we are now restricted to a subbundle $\mcal{B}_2\subset\mcal{B}_1$. The tableaux is still not involutive since $\dim(A^{(1)})=0$ and the first Cartan character is nonzero. Thus, we seek an additional reduction of the structure group. First, however, we may notice that $B_3=0$. Indeed, since $dC=C_3\omega^3$, we can check the relations determined by $d^2C=0$. Doing so, we find that
\begin{equation}\label{d^2C=0}
0=dC_3\wedge \omega^3+C_3d\omega^3,
\end{equation}
meaning that $B_3$ must be zero. Since we are assuming that $dC\wedge dC_3\neq0$ on $M$ then $B_1$ and $B_2$ cannot simultaneously vanish. As such, we now check the action of the group on the remaining torsion.
\[
\begin{bmatrix} B_1\\ B_2 \end{bmatrix} \mapsto \begin{bmatrix} a & 0 \\ 0 & \delta  a  \end{bmatrix}\begin{bmatrix} B_1\\ B_2 \end{bmatrix}.
\]
We recognize that $a^2(B_1^2+B_2^2)$ may be normalized to $1$ and so we may reduce the structure group again so that $a=1$. This means there is a function $\zeta$ on $M$ such that $B_2=\delta \sin(\zeta)$ and $B_1=\cos(\zeta)$.  We also observe the following relations, 
\begin{equation*}
C_{31}=-\delta C_3\sin(\zeta)\text{ and }C_{32}=-C_3\cos(\zeta).
\end{equation*}
The group reduction introduces additional torsion since $\alpha$ is now basic. Indeed, let
\[
\alpha=A_1\omega^1+A_2\omega^2+A_3\omega^3.
\]
Then the structure equations become 
\begin{equation}
\begin{aligned}
d\omega^1&=A_2\omega^1\wedge\omega^2+(A_3-C)\omega^1\wedge\omega^3+\omega^2\wedge\omega^3,\\
d\omega^2&=-A_1\omega^1\wedge\omega^2+\epsilon \omega^1\wedge\omega^3+(A_3+C)\omega^2\wedge\omega^3,\\
d\omega^3&=\delta \sin(\zeta)\omega^1\wedge\omega^3+\cos(\zeta)\omega^2\wedge\omega^3.
\end{aligned}
\end{equation}
We have now reduced to an $e$-structure with complicated torsion. 
\item \textbf{Case 3: $dC\wedge\omega^1\wedge\omega^2\neq0$ and $dC\wedge dC_3=0$ ($C_3\neq 0$ and $B_1=B_2=0$)}. We pick up from Case 2 in the situation where $B_1=B_2=0$ and therefore we no longer have remaining torsion to normalize using the scaling factor $a$. However, since $\alpha$ is uniquely determined, we have an $e$-structure on $\mcal{B}_2$. Let $\alpha=\omega^4$. Then it is immediate from $d^2\omega^1=0,d^2\omega^2=0$ that 
\[
d\omega^4=E\omega^1\wedge \omega^2,
\]
for some $E$ on $\mcal{B}_2$. Moreover, the condition that $d^2\omega^4=0$ yields
\[
dE=E_1\omega^1+E_2\omega^2+2E\omega^4.
\]
Thus, on the bundle $\mcal{B}_2$ we have the following structure equations 
\begin{equation}\label{symp-struct-eqs}
\begin{aligned}
d\omega^1&=\omega^1\wedge \omega^4-C\omega^1\wedge \omega^3+\omega^2\wedge \omega^3,\\
d\omega^2&=\omega^2\wedge \omega^4+C\omega^2\wedge\omega^3+\epsilon \omega^1\wedge\omega^3,\\
d\omega^3&=0,\\
d\omega^4&=E\omega^1\wedge\omega^2.
\end{aligned}
\end{equation}
\end{enumerate}
We summarize the first two cases above as a single Theorem. 
\begin{thm}\label{non-const-C-struct}
Let $(\omega^1,\omega^2,\omega^3)$ be a coframing of a 3-manifold $M$ such that $\omega^1$ and $\omega^2$ form a bicontact structure. Away from the critical set of $C$ we  may adapt the coframing to one of the following structure equations:
\begin{enumerate}
\item If $C_3= 0$ then
\begin{equation}\label{struct-eqs 1}
\begin{aligned}
d\omega^1&=(A_3-C)\omega^1\wedge\omega^3+\omega^2\wedge\omega^3,\\
d\omega^2&=\epsilon  \omega^1\wedge\omega^3+(A_3+C)\omega^2\wedge\omega^3,\\
d\omega^3&=B_3 \omega^1\wedge\omega^2+B_2\omega^1\wedge \omega^3+B_1\omega^2 \wedge \omega^3,\\
dC&=\cos(\xi)\omega^1+\delta \sin(\xi)\omega^2.
\end{aligned}
\end{equation}
\item If $C_3\neq0$ then $C_1= C_2= 0$ and if $dC\wedge dC_3 \neq 0$ then
\begin{equation}\label{struct-eqs 2}
\begin{aligned}
d\omega^1&=A_2\omega^1\wedge\omega^2+(A_3-C)\omega^1\wedge\omega^3+\omega^2\wedge\omega^3,\\
d\omega^2&=-A_1\omega^1\wedge\omega^2+\epsilon \omega^1\wedge\omega^3+(A_3+C)\omega^2\wedge\omega^3,\\
d\omega^3&=\delta \sin(\zeta)\omega^1\wedge\omega^3+\cos(\zeta)\omega^2\wedge\omega^3,\\
dC&=C_3\omega^3.
\end{aligned}
\end{equation}
\end{enumerate}
\end{thm}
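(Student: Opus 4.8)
The plan is to run Cartan's algorithm on the 1-adapted bundle $\mcal{B}_1$ with structure equations \eqref{Struct Eq 1}, using the induced action of $G_1$ on the components of $dC$ as the organizing tool, exactly as set up in Section \ref{C-section}. First I would record the transformation law
\[
dC=\left(\frac{C_1}{a}-\frac{b_1 C_3}{a}\right)\omega^1+\left(\frac{C_2}{\delta a}-\frac{b_2 C_3}{\delta a}\right)\omega^2+C_3\omega^3,
\]
which shows that the vanishing or non-vanishing of $C_3$ is a $G_1$-invariant condition and dictates which group parameters are available to normalize the remaining $C_i$. Since we work away from the critical set of $C$, at least one $C_i$ is nonzero at each point, and the two cases of the theorem are exactly the two ways this can happen relative to $C_3$.

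For part (1), $C_3=0$: then $(C_1,C_2)\neq(0,0)$ everywhere, so the scaling $a$ can be pinned down by imposing $C_1^2+C_2^2=1$, and we may write $C_1=\cos\xi$, $C_2=\delta\sin\xi$. This is a genuine reduction of the $a$-parameter, while $b_1,b_2$ survive; absorbing $\alpha$ into basic form $\alpha=A_i\omega^i$ reintroduces torsion $A_1,A_2$ in the $\omega^1\wedge\omega^2$ components, with translation action $A_1\mapsto A_1+b_2-(A_3+C)b_1$, $A_2\mapsto A_2+b_1-(A_3-C)b_2$. Its linear part is invertible precisely when $A_3^2-C^2\neq1$, and under that hypothesis we normalize $A_1=A_2=0$, eliminate $b_1,b_2$, and land on an $e$-structure; with $\beta_1,\beta_2$ now basic (relabeled by the $B_j$) one reads off \eqref{struct-eqs 1}.

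For part (2), $C_3\neq0$: now the $b_1,b_2$ translations act invertibly on $(C_1,C_2)$, so we normalize $C_1=C_2=0$, i.e. $dC=C_3\omega^3$, killing $b_1,b_2$ and reducing $G_1$ to $G_2$ with only $a$ (and the discrete $\delta$) remaining, which makes $\beta_1,\beta_2$ basic. Differentiating $dC=C_3\omega^3$ and imposing $d^2C=0$ forces the $\omega^1\wedge\omega^2$-coefficient of $d\omega^3$ to vanish, giving $B_3=0$ and expressing $C_{31},C_{32}$ in terms of the remaining $B_j$. Under the added hypothesis $dC\wedge dC_3\neq0$ the pair $(B_1,B_2)$ is nowhere zero and transforms by $(B_1,B_2)\mapsto(aB_1,\delta a B_2)$, so $a^2(B_1^2+B_2^2)=1$ normalizes $a=1$; write $B_1=\cos\zeta$, $B_2=\delta\sin\zeta$. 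With $a$ gone, $\alpha$ becomes basic, $\alpha=A_i\omega^i$, and substitution yields \eqref{struct-eqs 2}.

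The main obstacle I expect is not any single deep step but the careful torsion bookkeeping: one must track how absorbing $\alpha$, $\beta_1$, $\beta_2$ into basic 1-forms reintroduces torsion at each reduction, confirm that the non-degeneracy hypotheses ($A_3^2-C^2\neq1$ in case (1), $dC\wedge dC_3\neq0$ in case (2)) are exactly what is needed for the relevant linear actions to be invertible, and verify the $d^2=0$ identities consistently, in particular that they force no further relations collapsing the claimed normal forms. These computations are routine but lengthy, and keeping the $\delta=\pm1$ sign ambiguity from \eqref{1-adapted} correct throughout is the easiest place to slip.
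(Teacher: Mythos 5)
Your proposal is correct and follows essentially the same route as the paper: the paper's proof of this theorem simply cites the preceding moving-frame calculations (Cases 1 and 2 of the non-constant $C$ analysis), which are exactly the normalizations you carry out, including the $C_1^2+C_2^2=1$ reduction with $C_1=\cos\xi$, $C_2=\delta\sin\xi$, the translation of $A_1,A_2$ under $A_3^2-C^2\neq 1$, the normalization $C_1=C_2=0$ when $C_3\neq0$, the $d^2C=0$ argument giving $B_3=0$, and the final scaling normalization $B_1=\cos\zeta$, $B_2=\delta\sin\zeta$. The only additional remark in the paper's proof is the compatibility with the ``away from the critical set of $C$'' hypothesis (in case (1) the normalization $C_1^2+C_2^2=1$ precludes critical points, and in case (2) $C_3\neq0$ does), which you address implicitly.
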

\begin{proof}
The structure equations themselves follow from the preceding moving frame calculations. Equations \eqref{struct-eqs 2} are not valid at critical points of $C$ by hypothesis that $C_3\neq 0$. Moreover, the equations \eqref{struct-eqs 1} never allow for a critical point of $C$ by virtue of the fact that $C_1^2+C_2^2=1$. 
\end{proof}
We also include this proposition that extends the structure equations from Theorem \ref{non-const-C-struct}. 
\begin{prop}\label{der-conditions}
Let $(\omega^1,\omega^2,\omega^3)$ be a bi-contact structure as in Theorem \ref{non-const-C-struct}.
\begin{enumerate}
\item If $C_3=0$ and $B_3=0$ everywhere on $U\subset M$ then 
\begin{equation}
\begin{aligned}
d\xi&=-\rho\sin(\xi)\omega^1+\rho\cos(\xi)\omega^2+\xi_3\omega^3,\\
\xi_3&=\frac{1}{2}(\epsilon +1)\cos(2\xi)+C\sin(2\xi)+\frac{1}{2}(1-\epsilon ),\\
A_3&=\frac{1}{2}\partiald{\xi_3}{\xi},\\
\begin{bmatrix} B_1 \\ B_2\end{bmatrix}&=\begin{bmatrix} C-A_3 & 1 \\ -\epsilon  & C+A_3 \end{bmatrix}^{-1}\begin{bmatrix} \cos(\xi) & -\sin(\xi) \\ \sin(\xi) & \cos(\xi) \end{bmatrix}\begin{bmatrix}\displaystyle\frac{\rho}{2}\partiald{A_3}{\xi}-\sin(2\xi) \\ -\displaystyle\frac{1}{2}\partiald{A_3}{C}-\cos(2\xi)\end{bmatrix},\\
\rho&=\frac{\sin(2\xi)}{\xi_3+\epsilon -1}.
\end{aligned}
\end{equation}
\item If $C_3\neq0$ on $U\subset M$ then
\begin{equation}
d\zeta=(W\cos(\zeta)+\delta A_2)\omega^1-(\delta W\sin(\zeta)+\delta A_1)\omega^2+\zeta_3\omega^3
\end{equation}
\end{enumerate}
for some functions $W$ and $\zeta_3$ on $M$.
\end{prop}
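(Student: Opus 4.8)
The plan is to extract both sets of formulae as the integrability conditions $d^2\omega^i=0$ and $d^2C=0$ demand, reading off coefficients of the basis 2-forms $\omega^i\wedge\omega^j$. In case (1) we start from the structure equations \eqref{struct-eqs 1} together with $dC=\cos(\xi)\,\omega^1+\delta\sin(\xi)\,\omega^2$. First I would write $d\xi = \xi_1\omega^1+\xi_2\omega^2+\xi_3\omega^3$ with unknown coefficients, and impose $d(dC)=0$: differentiating $dC$ and substituting the structure equations for $d\omega^1,d\omega^2$ produces a 2-form whose vanishing forces the stated relation $\xi_3=\tfrac12(\epsilon+1)\cos(2\xi)+C\sin(2\xi)+\tfrac12(1-\epsilon)$ from the $\omega^1\wedge\omega^3$ and $\omega^2\wedge\omega^3$ components, while the $\omega^1\wedge\omega^2$ component relates $\xi_1,\xi_2$ to $\rho$ via $\xi_1=-\rho\sin\xi$, $\xi_2=\rho\cos\xi$ (this is the definition of $\rho$). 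Then I would impose $d(d\omega^1)=0$ and $d(d\omega^2)=0$ (using $B_3=0$): the $\omega^1\wedge\omega^2\wedge\omega^3$ coefficients give a linear system for the two unknown 2-form coefficients appearing in $d\alpha$, and, crucially, comparing against the already-known expression $d\alpha=-B_3\,\omega^1\wedge\omega^2-\dots$ (recall $\alpha=\omega^4$ is unique and $d\alpha$ is determined by $d^2\omega^1=d^2\omega^2=0$) pins down $A_3=\tfrac12\,\partial\xi_3/\partial\xi$. Substituting all of this back into the residual components of $d^2\omega^1=0$, $d^2\omega^2=0$ yields the matrix equation for $(B_1,B_2)^{\mathsf T}$ and, finally, eliminating between the over-determined system produces $\rho = \sin(2\xi)/(\xi_3+\epsilon-1)$.

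In case (2) the bookkeeping is lighter. With $dC=C_3\omega^3$ and structure equations \eqref{struct-eqs 2}, I would write $d\zeta=\zeta_1\omega^1+\zeta_2\omega^2+\zeta_3\omega^3$ and also $dC_3 = C_{3,1}\omega^1+C_{3,2}\omega^2+C_{3,3}\omega^3$. The condition $d^2C=0$ gives $dC_3\wedge\omega^3 + C_3\,d\omega^3=0$; substituting $d\omega^3=\delta\sin(\zeta)\,\omega^1\wedge\omega^3+\cos(\zeta)\,\omega^2\wedge\omega^3$ recovers the relations $C_{3,1}=-\delta C_3\sin\zeta$, $C_{3,2}=-C_3\cos\zeta$ already noted in the text. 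Then $d(d\omega^3)=0$: differentiating $d\omega^3$ and using the structure equations for $d\omega^1,d\omega^2$ and $d\zeta$, the coefficient of $\omega^1\wedge\omega^2\wedge\omega^3$ must vanish; this forces $\zeta_1 = W\cos\zeta + \delta A_2$ and $\zeta_2 = -\delta W\sin\zeta - \delta A_1$ for a single scalar $W$ (the apparent two degrees of freedom in $(\zeta_1,\zeta_2)$ collapse to one because the $d\omega^3$ equation is a single scalar constraint), while $\zeta_3$ is unconstrained by this equation, hence an independent function on $M$. This is exactly the claimed formula for $d\zeta$, completing case (2).

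The main obstacle I anticipate is case (1): there the forms $\alpha$, $\beta_1$, $\beta_2$ are all basic, so $d^2\omega^1=0$, $d^2\omega^2=0$, and $d^2C=0$ together constitute a fairly large over-determined linear system in the unknowns $\xi_1,\xi_2,\xi_3,A_1,A_2,A_3,B_1,B_2$ (with $A_1=A_2=0$ already imposed by the reduction in Case 1 of the preceding subsection, and $B_3=0$ by hypothesis). Keeping track of which $d^2=0$ identity produces which relation — and verifying that the system is consistent rather than forcing further unwanted constraints — is the delicate part; the appearance of $\partial A_3/\partial\xi$ and $\partial A_3/\partial C$ in the formula for $(B_1,B_2)$ shows that one must treat $A_3$ as a function of $(\xi,C)$ (which is legitimate because $A_3$ is determined by $\xi_3$, itself a function of $(\xi,C)$) and differentiate accordingly. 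Once the organizing principle ``every formula is a coefficient in some $d^2=0$ identity, with $A_3$ viewed as $A_3(\xi,C)$'' is in hand, each individual identity is a routine, if lengthy, exterior-algebra computation, and I would relegate the explicit expansions to a direct calculation.
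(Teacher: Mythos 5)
Your case (2) is fine and is essentially the paper's argument: the single scalar constraint you extract from $d^2\omega^3=0$ is the same one the paper reads off from $d^2C_3\equiv 0 \bmod\{\omega^3\}$, and its one-parameter solution space is exactly the parametrization by $W$. In case (1) your scheme does correctly produce $\xi_1=-\rho\sin(\xi)$, $\xi_2=\rho\cos(\xi)$, the formula for $\xi_3$, and the matrix formula for $(B_1,B_2)$, but you misassign where $A_3$ comes from. In the adapted situation of \eqref{struct-eqs 1} there is no independent connection form $\alpha$ left to compare with: $\alpha$ has become basic with $A_1=A_2=0$, so $A_3$ is just a torsion function, and the identities $d^2\omega^1=0$, $d^2\omega^2=0$ involve only the derivatives $(A_3)_1,(A_3)_2$ together with $B_1,B_2$; they cannot fix $A_3$ itself. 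In fact the $\omega^1\wedge\omega^3$ and $\omega^2\wedge\omega^3$ components of $d^2C=0$ are two equations in the two unknowns $\xi_3$ and $A_3$, and solving that $2\times 2$ linear system gives both formulas simultaneously; that is how the paper obtains $A_3=\tfrac12\,\partial\xi_3/\partial\xi$.

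The genuine gap is the final formula $\rho=\sin(2\xi)/(\xi_3+\epsilon-1)$. You claim it falls out of ``eliminating between the over-determined system'' $d^2C=0$, $d^2\omega^1=0$, $d^2\omega^2=0$, but at that order the system is not over-determined in $\rho$: those identities are exactly used up in determining $\xi_3$, $A_3$, $B_1$, $B_2$ (the latter two being linear in $\rho$), and $\rho$ remains a free function at this stage. To pin it down one must pass to higher-order integrability conditions, which your plan omits entirely: $d^2\xi=0$ (introducing a new function $W$ through $\rho_1,\rho_2$), $d(dC\wedge d\xi)=0$ (fixing $\rho_3$), $d^2\omega^3=0$ --- which you never invoke in case (1), even though with $B_3=0$ it is precisely the identity that brings $dB_1$, $dB_2$, hence $d\rho$ and $W$, into play and determines $W$ linearly --- and finally $d(d\rho\wedge d\xi)=0$, cross-checked against $d(d\rho\wedge dC)=0$, a computation heavy enough that the paper delegates it to computer algebra and which yields a cubic equation in $\rho$ whose admissible root is the stated expression. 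Without these additional identities your setup cannot produce the $\rho$ formula, so the hardest part of the proposition is left unproved.
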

\begin{proof}
Let us prove part $(1)$ first. Starting from equations \eqref{struct-eqs 1}, we compute $d^2C$ and let 
$d\xi=\xi_1\omega^1+\xi_2\omega^2+\xi_3\omega^3$. Then
\begin{equation*}
\begin{aligned}
d^2C&=(\xi_1\cos(\xi)+\xi_2\sin(\xi))\,\omega^1\wedge\omega^2 + ((A_3-C)\cos(\xi)-\sin(\xi)(-\epsilon -\xi_3))\,\omega^1\wedge \omega^3\\
\,&+((A_3+C)\sin(\xi)+\cos(\xi)(1-\xi_3))\,\omega^2\wedge \omega^3.
\end{aligned}
\end{equation*}
Thus there is some function $\rho$ on $M$ such that
\begin{equation*}
\begin{aligned}
\xi_1&=-\rho\sin(\xi),\\
\xi_2&=\rho\cos(\xi).
\end{aligned}
\end{equation*}
Moreover, rearranging the remaining 2-form coefficients into a matrix equation yields
\begin{equation*}
\begin{aligned}
\begin{bmatrix} A_3 \\ -\xi_3 \end{bmatrix} & = \begin{bmatrix} \cos(\xi) & -\sin(\xi) \\ \sin(\xi) & \cos(\xi) \end{bmatrix}^{-1}\begin{bmatrix} \cos(\xi) & -\epsilon \sin(\xi)\\ -\sin(\xi) & \cos(\xi) \end{bmatrix}\begin{bmatrix}C \\ 1 \end{bmatrix}.
\end{aligned}
\end{equation*}
As such
\begin{equation*}
\begin{aligned}
\xi_3 & = C \sin(2\xi)+\frac{1}{2}(1+\epsilon )\cos(2\xi)+\frac{1}{2}(1-\epsilon ),\\
A_3 & = C \cos(2\xi)-\frac{1}{2}(1+\epsilon )\sin(2\xi).
\end{aligned}
\end{equation*}
It is easy to observe the slightly simpler expression that is $A_3=\displaystyle{\frac{1}{2}}\partiald{\xi_3}{\xi}$ with $A_3$ thought of as a function of $C$ and $\xi$. We remark as well that
\[
\partiald{A_3}{\xi}=1-\epsilon -2\xi_3
\]
as it is sometimes helpful later on.
Now we need to inspect $d^2\omega^1$ and $d^2\omega^2$. Doing so we find
\begin{equation*}
\begin{aligned}
d^2\omega^1&=((C-A_3)B_1+B_2+\sin(\xi))\Omega-\omega^1\wedge dA_3 \wedge\omega^3,\\
d^2\omega^2&=(-\epsilon  B_1+(C+A_3)B_2+\cos(\xi))\Omega+dA_3\wedge\omega^2\wedge\omega^3,
\end{aligned}
\end{equation*}
so that we need only $(A_3)_1$ and $(A_3)_2$. Of course, 
\[
dA_3 = \frac{1}{2}\left(\partialds{\xi_3}{\xi}{C}dC+\partialdss{\xi_3}{\xi}d\xi\right),
\]
so 
\begin{equation*}
\begin{aligned}
-(A_3)_2 &= \frac{1}{2}\left(-\partialdss{\xi_3}{\xi}\rho \cos(\xi)-\partialds{\xi_3}{\xi}{C}\sin(\xi)\right),\\
(A_3)_1 &= \frac{1}{2}\left(-\partialdss{\xi_3}{\xi}\rho \sin(\xi)+\partialds{\xi_3}{\xi}{C}\cos(\xi)\right).
\end{aligned}
\end{equation*}
The vanishing of $d^2\omega^1$ and $d^2\omega^2$ may now be recast as the equations
\begin{equation*}
\begin{bmatrix} C-A_3 & 1 \\ -\epsilon  & C+A_3 \end{bmatrix}\begin{bmatrix} B_1 \\ B_2 \end{bmatrix}+\begin{bmatrix} \sin(\xi) \\ \cos(\xi) \end{bmatrix}+\frac{1}{2}\begin{bmatrix} \cos(\xi) & -\sin(\xi) \\ \sin(\xi) & \cos(\xi) \end{bmatrix} \begin{bmatrix}-\rho\partialdss{\xi_3}{\xi} \\ \partialds{\xi_3}{\xi}{C} \end{bmatrix}=\begin{bmatrix} 0 \\ 0\end{bmatrix}.
\end{equation*}
Thus 
\begin{equation*}
\begin{bmatrix} B_1 \\ B_2\end{bmatrix}=\begin{bmatrix} C-A_3 & 1 \\ -\epsilon  & C+A_3 \end{bmatrix}^{-1}\begin{bmatrix} \cos(\xi) & -\sin(\xi) \\ \sin(\xi) & \cos(\xi) \end{bmatrix}\begin{bmatrix}\displaystyle\frac{\rho}{2}\partiald{A_3}{\xi}-\sin(2\xi) \\ -\displaystyle\frac{1}{2}\partiald{A_3}{C}-\cos(2\xi)\end{bmatrix}.
\end{equation*}
Next we investigate a $d^2\xi=0$ conditions. Let $d\rho=\rho_1\omega^1+\rho_2\omega^2+\rho_3\omega^3$. Then 
\[
d^2\xi\equiv \left(\rho_1\cos(\xi)+\rho_2\sin(\xi)+\rho^2\right)\omega^1\wedge\omega^2\mod\{\omega^3\}
\]
so we conclude that there exists a function $W$ such that 
\begin{equation*}
\begin{aligned}
\rho_1&=-\rho^2\cos(\xi)-W\sin(\xi),\\
\rho_2&=-\rho^2\sin(\xi)+W\cos(\xi).
\end{aligned}
\end{equation*}
To determine $\rho_3$ it is simplest to check that $d(dC\wedge d\xi)=0$. Indeed, 
\begin{equation*}
\begin{aligned}
d(dC\wedge d\xi)&=d(\rho\,\omega^1\wedge\omega^2+\xi_3dC\wedge\omega^3),\\
0&=(\rho_3-2A_3\rho)\Omega+d\xi_3\wedge dC\wedge\omega^3-\xi_3dC\wedge d\omega^3.
\end{aligned}
\end{equation*}
However,
\[
d\xi_3=2A_3d\xi+\partiald{\xi_3}{C}dC
\]
and therefore
\[
0=(\rho_3-4A_3\rho+\xi_3B_2\sin(\xi)-\xi_3B_1\cos(\xi))\Omega
\]
so we conclude that 
\[
\rho_3=4A_3\rho+\xi_3B_1\cos(\xi)-\xi_3B_2\sin(\xi).
\]
This completely determines $d\rho$. Next we investigate $d^2\omega^3=0$. Indeed, 
\begin{equation*}
\begin{aligned}
d^2\omega^3&=\partiald{B_1}{C}dC\wedge \omega^2\wedge\omega^3+\partiald{B_1}{\xi}d\xi\wedge\omega^2\wedge\omega^3+
\partiald{B_1}{\rho}d\rho\wedge\omega^2\wedge\omega^3\\
&+\partiald{B_2}{C}dC\wedge \omega^1\wedge\omega^3+\partiald{B_2}{\xi}d\xi\wedge\omega^1\wedge\omega^3+
\partiald{B_2}{\rho}d\rho\wedge\omega^1\wedge\omega^3.
\end{aligned}
\end{equation*}
Conveniently, this results in a linear equation in $W$
\begin{equation*}
\begin{aligned}
0&=\cos(\xi)\left(\partiald{B_1}{C}+\rho\partiald{B_2}{\xi}-\rho^2\partiald{B_1}{\rho}\right)-\sin(\xi)\left(\partiald{B_2}{C}+\rho\partiald{B_1}{\xi}-\rho^2\partiald{B_2}{\rho}\right)\\
&-W\left(\cos(\xi)\partiald{B_2}{\rho}+\sin(\xi)\partiald{B_1}{\rho}\right).
\end{aligned}
\end{equation*}
We can now determine $W$ explicitly in terms of $C,\xi$ and $\rho$. 
\[
W=\frac{\cos(\xi)\left(\partiald{B_1}{C}+\rho\partiald{B_2}{\xi}-\rho^2\partiald{B_1}{\rho}\right)-\sin(\xi)\left(\partiald{B_2}{C}+\rho\partiald{B_1}{\xi}-\rho^2\partiald{B_2}{\rho}\right)}{\cos(\xi)\partiald{B_2}{\rho}+\sin(\xi)\partiald{B_1}{\rho}}.
\]
It is important to remark that $W$ is quadratic in $\rho$ since each $B_1$ and $B_2$ are linear in $\rho$. 

To determine $\rho$, we proceed by checking $d(d\rho\wedge d\xi)=0$. This calculation becomes difficult to complete by hand. As such, we relegate the calculation to the computer algebra software MAPLE and use the \textbf{Cartan} Package (which was written by Yunliang Yu, and now improved and maintained by Jeanne Clelland and Tom Ivey). In part because of the presence of $W$, we are left with a cubic polynomial in $\rho$. Solving, and checking additional consistencies with $d(d\rho \wedge dC)=0$, we arrive at the unique expression 
\[
\rho=\frac{\sin(2\xi)}{\xi_3+\epsilon -1}.
\] 

For the case of $C_3\neq0$, let
\[
d\zeta=\zeta_1\omega^1+\zeta_2\omega^2+\zeta_3\omega^3.
\]
Then 
\begin{equation}
\begin{aligned}
d^2C_3&\equiv C_3(\delta (A_1\cos(\zeta)+\zeta_1\sin(\zeta))-A_2\sin(\zeta)+\zeta_2\cos(\zeta))\omega^1\wedge\omega^2\mod\{\omega^3\}
\end{aligned}
\end{equation}
meaning there is a function $W$ on $M$ such that
\begin{equation}
\begin{aligned}
\zeta_1&=W\cos(\zeta)+\delta A_2,\\
\zeta_2&=-\delta W\sin(\zeta)-\delta A_1,
\end{aligned}
\end{equation} 
as claimed. 
\comment{
In fact, 
\begin{equation*}
\begin{aligned}
\partiald{B_1}{\rho}&=\frac{((C+A_3)\cos(\xi)-\sin(\xi))}{2(C^2-A_3^2+\epsilon )}\partiald{A_3}{\xi}\\
\partiald{B_2}{\rho}&=\frac{(-\epsilon \cos(\xi)+(C-A_3)\sin(\xi))}{2(C^2-A_3^2+\epsilon )}\partiald{A_3}{\xi}.
\end{aligned}
\end{equation*}
It now follows that the denominator of $W$ simplifies to
\[
\frac{\xi_3-\epsilon \cos(2\xi)-1}{2(C^2-A_3^2+\epsilon )}\,\partiald{A_3}{\xi}.
\]
Moreover, the numerator of $W$ may be simplified somewhat as well since
\[
\sin(\xi)\partiald{B_2}{\rho}-\cos(\xi)\partiald{B_1}{\rho}=-\frac{\,2A_3+\epsilon \sin(2\xi)}{2(C^2-A_3^2+\epsilon )}\,\partiald{A_3}{\xi}.
\]
Additionally, let us also use the identity
\[
C^2-A_3^2+\epsilon =\xi_3(\xi_3+\epsilon -1)
\]
so that a simpler expression for $W$ is
\begin{equation*}
\begin{aligned}
W&=\frac{2\xi_3(\xi_3+\epsilon -1)}{(\xi_3-\epsilon \cos(2\xi)-1)(1-\epsilon -2\xi_3)}\left(\cos(\xi)\left(\partiald{B_1}{C}+\rho\partiald{B_2}{\xi}\right)-\sin(\xi)\left(\partiald{B_2}{C}+\rho\partiald{B_1}{\xi}\right)\right)\\
&-\rho^2\frac{\,2A_3+\epsilon \sin(2\xi)}{\xi_3-\epsilon \cos(2\xi)-1}
\end{aligned}
\end{equation*}
Finally, in order to find an explicit formula for $\rho$ in terms of $C$ and $\xi$, we investigate $d(d\rho\wedge d\xi_3)=0$ conditions. To that end, let us calculate each $d\rho_i$ for $i=1,2,3$,
\begin{equation*}
\begin{aligned}
d\rho_1&=-2\rho\cos(\xi)d\rho+(\rho^2\sin(\xi)-W\cos(\xi))\,d\xi-\sin(\xi)\,dW,\\
d\rho_2&=-2\rho\sin(\xi)d\rho-(\rho^2\cos(\xi)+W\sin(\xi))\,d\xi+\cos(\xi)\,dW,\\
d\rho_3&=4\rho\,dA_3+4A_3d\rho+(B_1\cos(\xi)-B_2\sin(\xi))d\xi_3-\xi_3(B_1\sin(\xi)+B_2\cos(\xi))d\xi\\
&+\xi_3(\cos(\xi)dB_1-\sin(\xi)dB_2).
\end{aligned}
\end{equation*}
Notice that 
\begin{equation*}
\begin{aligned}
\cos(\xi)d\rho_1+\sin(\xi)d\rho_2&=-2\rho\,d\rho-W\,d\xi,\\
-\sin(\xi)d\rho_1+\cos(\xi)d\rho_2&=dW-\rho^2d\xi.
\end{aligned}
\end{equation*}
In particular 
\[
d\rho_1\wedge d\rho_2=-2\rho\,d\rho\wedge dW+2\rho^3d\rho \wedge d\xi-Wd\xi\wedge dW
\]
It follows that 
\begin{equation*}
\begin{aligned}
dW&=\rho^2d\xi-\sin(\xi)d\rho_1+\cos(\xi)d\rho_2,\\
d\rho&=-\frac{W}{2\rho}d\xi-\frac{\cos(\xi)}{2\rho}d\rho_1-\frac{\sin(\xi)}{2\rho}d\rho_2.
\end{aligned}
\end{equation*}
To calculate $d(d\rho\wedge d\xi)$ we start with
\begin{equation*}
\begin{aligned}
d\rho\wedge d\xi&=(\rho_1\rho\cos(\xi)+\rho_2\rho\sin(\xi))\omega^1\wedge \omega^2-(\rho_3\rho\sin(\xi)+\rho_1\xi_3)\omega^3\wedge \omega^1\\
&+(\rho_2\xi_3-\rho_3\rho\cos(\xi))\omega^2\wedge \omega^3,\\
d\rho\wedge d\xi&=-\rho^3\omega^1\wedge\omega^2-((\rho_3\rho-W\xi_3)\sin(\xi)-\rho^2\xi_3\cos(\xi))\omega^3\wedge\omega^1\\
&+((W\xi_3+\rho_3\rho)\cos(\xi)-\rho^2\xi_3\sin(\xi))\omega^2\wedge \omega^3
\end{aligned}
\end{equation*}
}
\end{proof}
We end this section with two propositions and two examples of the third case of $C$ non-constant. The proposition below was noted in \cite{PerroneBiContactHyp} but we see this arise naturally in the context of studying bi-contactomorphisms. Additionally, the construction in \cite{PerroneBiContactHyp} only yields the case of $E=0$. Let us also briefly give a convenient definition. A \textbf{symplectic quadratic} on a symplectic manifold is simply a pair of symplectic forms $\theta^1,\theta^2$ such that $\theta_a=a_1\theta^1+a_2\theta^2$ has the property that $\theta_a\wedge \theta_a=\mcal{P}_C(a)\Omega$ where $\mcal{P}_C(a)=1$ for some function $C$. As with bi-contact structures, it may be that a hyperbola defined by $\mcal{P}_C(a)$ may force the orientation of the volume form for $\theta^1$ and $\theta^2$ to oppose one another. 
\begin{prop}\label{symp-prop}
Given a bi-contact structure $(\omega^1,\omega^2)$ on some $M$, if $dC\wedge\omega^1\wedge\omega^2=C_3\Omega\neq0$ but $dC_3\wedge \omega^3=0$ on $M$ then there is a manifold $\widetilde{M}$ containing $M$ as a submanifold such that $\widetilde{M}$ admits a symplectic quadratic.
\end{prop}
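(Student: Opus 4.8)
The plan is to take $\widetilde{M}$ to be the $4$-manifold $\mcal{B}_2$ carrying the $e$-structure of Case 3 and to read the symplectic quadratic straight off the structure equations \eqref{symp-struct-eqs}. First observe that the hypotheses place us exactly in that case: we are given $C_3\,\Omega=dC\wedge\omega^1\wedge\omega^2\neq 0$, so $C_3$ is nowhere zero and (by Theorem \ref{non-const-C-struct}) $dC=C_3\omega^3$ with $C_1=C_2=0$; then $dC\wedge dC_3=C_3\,\omega^3\wedge dC_3=0$ precisely because $dC_3\wedge\omega^3=0$. Hence $B_1=B_2=0$, the pseudoconnection form $\alpha$ is uniquely determined, and on $\mcal{B}_2$ the coframing $\{\omega^1,\omega^2,\omega^3,\omega^4\}$ with $\omega^4=\alpha$ satisfies \eqref{symp-struct-eqs}, in particular $d\omega^3=0$ and $d\omega^4=E\,\omega^1\wedge\omega^2$. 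The projection $\mcal{B}_2\to M$ has one-dimensional fibers (structure group the scalings by $a$, together with the discrete $\delta$), so on a connected component it is a trivial $\mathbb{R}_{>0}$-bundle; choosing a section $\sigma\colon M\to\mcal{B}_2$ exhibits $M$ as a submanifold of $\widetilde{M}:=\mcal{B}_2\cong M\times\mathbb{R}$.

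Next I would set $\theta^1:=d\omega^1$ and $\theta^2:=d\omega^2$ on $\widetilde{M}$; these are exact, hence closed. A direct wedge computation with \eqref{symp-struct-eqs} — in which $d\omega^3=0$ and the repeated factors among the $\omega^i$ kill all but one term in each product — gives
\begin{equation*}
\theta^1\wedge\theta^1=2\,\omega^1\wedge\omega^2\wedge\omega^3\wedge\omega^4,\qquad \theta^2\wedge\theta^2=-2\epsilon\,\omega^1\wedge\omega^2\wedge\omega^3\wedge\omega^4,\qquad \theta^1\wedge\theta^2=2C\,\omega^1\wedge\omega^2\wedge\omega^3\wedge\omega^4.
\end{equation*}
Since $\epsilon=\pm1$, the first two of these are nowhere-zero $4$-forms, so $\theta^1$ and $\theta^2$ are non-degenerate, hence symplectic forms on $\widetilde{M}$.

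Finally, writing $\Omega:=2\,\omega^1\wedge\omega^2\wedge\omega^3\wedge\omega^4$ (a volume form on $\widetilde{M}$) and $\theta_a=a_1\theta^1+a_2\theta^2$, expansion of $\theta_a\wedge\theta_a$ and substitution of the three identities above yields
\begin{equation*}
\theta_a\wedge\theta_a=(a_1^2-\epsilon a_2^2+2C\,a_1 a_2)\,\Omega=\mathcal{P}_C(a)\,\Omega,
\end{equation*}
so on the conic $\{\mathcal{P}_C(a)=1\}$ the form $\theta_a$ is symplectic with $\theta_a\wedge\theta_a=\Omega$ independent of $a$. Thus $(\theta^1,\theta^2)$ is a symplectic quadratic on $\widetilde{M}$, and since $M\hookrightarrow\widetilde{M}$ the proposition follows. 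There is no hard analytic step here — everything is forced by the structure equations \eqref{symp-struct-eqs}; the only points requiring care are the sign bookkeeping in the $4$-form wedge products and, for a genuinely global $\widetilde{M}$, the remark that the relevant component of $\mcal{B}_2$ is a trivial line bundle over $M$ so that a global section (hence a global embedding of $M$) exists.
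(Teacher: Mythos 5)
Your proposal is correct and follows essentially the same route as the paper: take $\widetilde{M}=\mcal{B}_2$ with the Case 3 structure equations \eqref{symp-struct-eqs}, set $\theta^i=d\omega^i$, and read off $\theta^1\wedge\theta^1=2\Omega$, $\theta^2\wedge\theta^2=-2\epsilon\Omega$, $\theta^1\wedge\theta^2=2C\Omega$, so that $\theta_a\wedge\theta_a=\mathcal{P}_C(a)\Omega$. The extra care you take (matching the hypothesis $dC_3\wedge\omega^3=0$ to the condition $dC\wedge dC_3=0$, and noting the triviality of the one-dimensional fiber so that $M$ embeds via a section) is sound and only makes explicit what the paper leaves implicit.
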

\begin{proof}
From the structure equations \eqref{symp-struct-eqs} we see that 
\begin{equation}
\begin{aligned}
d\omega^1\wedge d\omega^1&= 2\omega^1\wedge\omega^2\wedge\omega^3\wedge\omega^4,\\
d\omega^2\wedge d\omega^2&= -2\epsilon \omega^1\wedge\omega^2\wedge\omega^3\wedge\omega^4.
\end{aligned}
\end{equation}
In fact, if $\theta^1=d\omega^1$ and $\theta^2=d\omega^2$ then the 2-form $\theta_a=a_1\theta^1+a_2\theta^2$ is a symplectic quadratic. 
\end{proof}
Once again, we find that the obstruction to the existence of a taut contact circle or equilateral hyperbola is also an obstruction for $\theta^1$ and $\theta^2$ to form a symplectic couple or symplectic pair \cite{GeigesSymplecticCouple}(i.e. $\epsilon=\pm1$). In particular the invariant $C$ arises in the form of $\theta^1\wedge \theta^2=2C\omega^1\wedge\omega^2\wedge\omega^3\wedge \omega^4$. 
Let us give an example of the case $E=0$. Reconsider example \ref{hyp-ex-C3-nonzero}. Relabel the 1-forms $\omega^1$ and $\omega^2$ as $e^{-s}\omega^1+e^{-s}(ds+s(1-z)C(z)^{-1}dz)$ and $e^{-s}\omega^2+e^{-s}(ds+s(z+\epsilon )C(z)^{-1}dz)$ and introduce $\omega^4=ds$. Then
\begin{equation}
\begin{aligned}
d\omega^1&=\omega^1\wedge\omega^4-z\omega^1\wedge\omega^3+\omega^2\wedge\omega^3,\\
d\omega^2&=\omega^2\wedge\omega^4+\omega^1\wedge\omega^3-z\omega^2\wedge\omega^4,\\
d\omega^3&=0,\\
d\omega^4&=0.
\end{aligned}
\end{equation}
Notice that each symplectic form $\theta^i=d\omega^i$, $i=1,2$ share the family of Lagrangian submanifolds determined by $(x,y,z_0,s_0)$ for $z_0$ and $s_0$ constants.

Now we consider an example where the invariant $E\neq0$. In general, such examples are locally foliated by contact 3-manifolds that are transverse to $M$. 
\begin{ex}
Let $M$ and $\widetilde{M}$ be as in Proposition \ref{symp-prop} and let $(x,y,z,w)$ be local coordinates for $\widetilde{M}$. Moreover, we have taken $C(z)=\tan(z)$ and we assume that $-\pi/2<z<\pi/2$. Matters being so, the following 1-forms satisfy the structure equations \eqref{symp-struct-eqs} with $E=y^{-1}e^{2w-y(x+z)}$. 
\begin{equation}
\begin{aligned}
\omega^1&=ze^{-w}(dx+dz)+ye^{y(x+z)-w}dy,\\
\omega^2&=-(z\tan(z)+1)e^{-w}(dx+dz)-y\tan(z)e^{y(x+z)-w}dy,\\
\omega^3&=dz,\\
\omega^4&=dw-y(dx+dz).
\end{aligned}
\end{equation}
It is easy to verify that $E$ is as claimed since $d\omega^4=-dy\wedge(dx+dz)$. 
\end{ex}
\begin{prop}\label{E-const-zero}
If $E$ is constant then $E=0$. 
\end{prop}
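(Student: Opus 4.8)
The plan is immediate from the structure equations \eqref{symp-struct-eqs} on the bundle $\mcal{B}_2$ together with the identity
\[
dE = E_1\omega^1 + E_2\omega^2 + 2E\,\omega^4
\]
that was already extracted from $d^2\omega^4 = 0$. The whole content lies in the coefficient $2E$ of $\omega^4$: it is rigidly determined, with no absorbable ambiguity, because one reads directly off \eqref{symp-struct-eqs} that $d\omega^1\wedge\omega^2 - \omega^1\wedge d\omega^2 = -2\,\omega^1\wedge\omega^2\wedge\omega^4$, so that $d^2\omega^4 = dE\wedge\omega^1\wedge\omega^2 - 2E\,\omega^1\wedge\omega^2\wedge\omega^4$ and the vanishing of this forces the $\omega^4$-component of $dE$ to be exactly $2E$ (and its $\omega^3$-component to be $0$).

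Granting that formula, the argument I would give is one line. Suppose $E$ is constant; then $dE = 0$ identically on $\mcal{B}_2$. Since $\{\omega^1,\omega^2,\omega^3,\omega^4\}$ is a coframing, these one-forms are pointwise linearly independent, so the vanishing of $dE = E_1\omega^1 + E_2\omega^2 + 2E\,\omega^4$ forces each coefficient to vanish separately; in particular the $\omega^4$-coefficient gives $2E = 0$, i.e. $E = 0$. (The $\omega^1$- and $\omega^2$-coefficients then give $E_1 = E_2 = 0$, which is automatic once $E=0$.)

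I do not anticipate any genuine obstacle here: the substantive work — the derivation of $dE = E_1\omega^1 + E_2\omega^2 + 2E\,\omega^4$ from the Bianchi-type identity $d^2\omega^4=0$ — has already been carried out in the lead-up to \eqref{symp-struct-eqs}, and what remains is just the observation that a function whose differential vanishes cannot have a nonzero constant multiple of itself appearing as a coframe-coefficient of that differential. If one wanted the proof to be fully self-contained, the only step worth re-displaying is the computation of $d^2\omega^4$ above; everything else is formal.
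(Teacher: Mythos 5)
Your proof is correct and follows essentially the same route as the paper: both arguments reduce to the $\omega^1\wedge\omega^2\wedge\omega^4$ component of the identity $d^2\omega^4=0$ computed from \eqref{symp-struct-eqs}, you by citing the already-derived relation $dE=E_1\omega^1+E_2\omega^2+2E\,\omega^4$ and setting $dE=0$, the paper by recomputing $d^2\omega^4=E(d\omega^1\wedge\omega^2-\omega^1\wedge d\omega^2)$ directly under the constancy assumption. (Your sign $d\omega^1\wedge\omega^2-\omega^1\wedge d\omega^2=-2\,\omega^1\wedge\omega^2\wedge\omega^4$ is in fact the correct one; the paper's stated $+2$ is an immaterial slip.)
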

\begin{proof}
If $E$ is constant then $d^2\omega^4=E(d\omega^1\wedge \omega^2-\omega^1\wedge d\omega^2)$ but $d\omega^1\wedge \omega^2-\omega^1\wedge d\omega^2=2\omega^1\wedge\omega^2\wedge\omega^4$ thus forcing $E=0$ to satisfy $d^2\omega^4=0$. 
\end{proof}
\subsection{Constant C}\label{constant C}

The contactomorphism group for a contact structure on a 3-manifold is infinite dimensional, but parameterized by a single function of 3 variables. That is, the space of contactomorphisms is in one-to-one correspondence with smooth functions on $M$. We now prove the following theorem which yields a similar result in case the invariant function $C$ is constant. 

\begin{thm}\label{constant-C-equiv}
All analytic bi-contact structures with the same non-zero constant $C$ and $\epsilon $ are locally equivalent under bi-contactomorphisms. Moreover, the pseudo-group of bi-contactomorphisms for a given bi-contact structure is locally parameterized by two functions of one variable. 
\end{thm}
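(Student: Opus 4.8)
The plan is to run Cartan's equivalence method to its conclusion in the constant-$C$ case, where—unlike the non-constant case—we have no invariant $dC$ to exploit for a reduction, so we must prolong. Starting from the structure equations \eqref{Struct Eq 1} with $C$ constant, the pseudo-connection form $\alpha$ is uniquely determined (as noted after Example \ref{hyp-ex-C3-nonzero}), but $\beta_1,\beta_2$ retain the Cartan-lemma ambiguity with symmetric matrix $(h_{ij})$, and the tableau $A$ is not involutive since $\dim(\pr{A}{1})=3$ while $s_1=2$, $s_2=1$, so $s_1+2s_2=4>3$. Hence I would prolong: adjoin $\beta_1,\beta_2$ (equivalently the three functions $h_{11},h_{12},h_{22}$) as new fiber coordinates on a bundle $\mcal{B}_1^{(1)}\to\mcal{B}_1$, write $d\beta_i$ in terms of new pseudo-connection forms $\pi_1,\pi_2,\pi_3$ (the differentials of the $h$'s modulo semibasic terms) plus torsion, and compute the prolonged structure equations from $d^2\omega^i=0$ and $d^2\alpha=0$. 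The key structural fact to extract is that $d\alpha$ must be expressible through $\omega^1,\omega^2,\omega^3$ and the new forms; imposing $d^2\omega^1=d^2\omega^2=0$ with constant $C$ should pin down $d\alpha$ and force relations among the torsion coefficients.

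Next I would analyze the prolonged tableau. I expect that the prolongation is involutive—this is the generic outcome of a single prolongation of the noninvolutive $A$ above—so that by the Cartan–Kähler theorem the local solutions depend, in the last nonzero Cartan character $s_p'$, on $s_p'$ functions of $p$ variables. To get "two functions of one variable" I need the reduced characters of the prolonged system to satisfy $s_1'=\dots=0$ except the last, which should equal $2$, with $p=1$; equivalently, the generality of bi-contactomorphisms fixing the structure should match two arbitrary functions of one variable. Concretely, I would verify that the bi-contactomorphism pseudogroup acts with this generality by examining the involutive prolonged system's characters directly: the symbol at the prolonged stage should have reduced characters forcing $s_1'+2s_2'+\dots = \dim(\text{prolonged tableau})$ with the count bottoming out at a two-dimensional space of top-order freedom. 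The local equivalence of any two analytic bi-contact structures with the same constant $(C,\epsilon)$ then follows because the prolonged coframe is an $e$-structure (or involutive system) whose only invariants are functions of $C,\epsilon$ alone—there is no nonconstant invariant available since $dC=0$ kills the first one and the $d^2$-closure relations, being linear with constant coefficients in $C,\epsilon$, generate no new ones—so the Cartan equivalence theorem for involutive systems (or the Cartan–Kähler uniqueness in the analytic category) gives a local bi-contactomorphism between them.

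The main obstacle I anticipate is the bookkeeping of the prolongation: correctly computing $d\beta_1,d\beta_2$ and the torsion that appears, identifying which combinations are absorbable, and confirming involutivity of the prolonged symbol rather than needing a second prolongation. A secondary subtlety is ruling out hidden invariants: I must check that all the $d^2=0$ identities at every stage are automatically satisfied given $C,\epsilon$ constant—i.e. that no new functionally independent torsion function survives—which amounts to a finite (if tedious) consistency computation, plausibly delegated to the \textbf{Cartan} Maple package as was done in Proposition \ref{der-conditions}. Once involutivity and the absence of nonconstant invariants are established, both assertions of the theorem—local homogeneity under bi-contactomorphisms and the "two functions of one variable" parametrization of the isotropy pseudogroup—drop out of the standard Cartan–Kähler machinery applied to the prolonged involutive system.
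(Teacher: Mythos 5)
Your overall strategy---prolong the non-involutive system \eqref{Struct Eq 1} and conclude with Cartan--K\"ahler---is the same as the paper's, but there is a concrete gap at the heart of the argument: you never identify, let alone dispose of, the new essential torsion that appears at the prolonged level, and that is exactly where the hypothesis $C\neq 0$ enters. After adjoining $\alpha,\beta_1,\beta_2$ as $\omega^4,\omega^5,\omega^6$, the identities $d^2\omega^1=d^2\omega^2=0$ force $d\omega^4$ to contain a term $E\,\omega^1\wedge\omega^2$ for a function $E$ on the prolonged bundle; constancy of $C$ does not kill $E$, and it is not obviously absorbable. The paper computes the fiber action $E\mapsto E-(h_{11}-\epsilon h_{22}+2Ch_{12})$, normalizes $E=0$, and thereby reduces the three-parameter prolongation group to two parameters by solving $h_{12}=(\epsilon h_{22}-h_{11})/(2C)$; this is the only place $C\neq0$ is used, and it is precisely what justifies your claim that ``no nonconstant invariant survives.'' Without this normalization-and-reduction step, your assertion that the $d^2=0$ identities generate no new invariants is unsupported: $E$ is a genuinely new torsion function, and one must verify that it can be translated away by the group rather than being an invariant that could obstruct equivalence.

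The second gap is the appeal to genericity for involutivity: a single prolongation of a non-involutive tableau need not be involutive (Kuranishi guarantees involutivity only after finitely many prolongations), and in any case the system you propose to test is not the one that ends up involutive here---the paper checks involutivity only \emph{after} the $E=0$ reduction. There, $d^2\omega^i$ for $i=5,6$ imposes no new conditions modulo $\{\omega^1,\omega^2\}$, the only nonzero Cartan character is $s_1=2$, and Cartan's lemma applied to $d^2\omega^5$ and $d^2\omega^6$ shows the remaining pseudo-connection freedom has dimension exactly $2$, so Cartan's test passes. That explicit computation, not a genericity heuristic, is what simultaneously yields local equivalence in the analytic category and the count of two functions of one variable (the last nonzero character $s_1=2$). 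With the $E$-normalization and the post-reduction involutivity check supplied, your outline becomes the paper's proof.
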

\begin{proof}
We will prolong the principal bundle associated to the non-involutive structure equations (\ref{Struct Eq 1}) to a new principal bundle which we denote $\mcal{B}^{(1)}_1$. We remark that our prolongation is not the full prolongation by all the freedom available from $\text{dim} A^{(1)}$ in the application of Cartan's test. Instead, since $\alpha$ is uniquely defined, it suffices to prolong only in the $\alpha$ direction and leave the $\beta_1$ and $\beta_2$ unchanged. Relabeling $\alpha=\omega^4$, we take $\pr{\omega}{1}$ as the new tautological 1-form with components $\omega^i$ for $1\leq i\leq 4$. 
To determine the structure equations, we need to determine $d\omega^4$ from $d^2\omega^1=d^2\omega^2=0$. Such calculations lead to the following: 
\begin{equation}
\begin{aligned}
d^2\omega^1 & = -d\omega^4 \wedge \omega^1 +\omega^4\wedge d\omega^1 +d(-C\omega^1+\omega^2)\wedge \omega^3 -(-C\omega^1+\omega^2)\wedge d\omega^3, \\
& = -d\omega^4\wedge \omega^1 + \omega^4 \wedge (-C\omega^1+\omega^2)\wedge \omega^3 +C\omega^4 \wedge \omega^1\wedge\omega^3 -\omega^4\wedge \omega^2 \wedge \omega^3\\
& +(-C\omega^1+\omega^2)\wedge(\beta_1 \wedge \omega^1 +\beta_2\wedge\omega^2),\\
& =  -(d\omega^4 +(\beta_1+C\beta_2)\wedge \omega^2) \wedge \omega^1 
\end{aligned}
\end{equation}
and 

\begin{equation}
\begin{aligned}
d^2\omega^2 & = -d\omega^4 \wedge \omega^2 +\omega^4\wedge d\omega^2 +d(\epsilon\omega^1+C\omega^2)\wedge \omega^3 -(\epsilon\omega^1+C\omega^2)\wedge d\omega^3, \\
& = -d\omega^4\wedge \omega^2 + \omega^4 \wedge (\epsilon\omega^1+C\omega^2)\wedge \omega^3 -\epsilon\omega^4 \wedge \omega^1\wedge\omega^3 -C\omega^4\wedge \omega^2 \wedge \omega^3\\
& +(\epsilon\omega^1+C\omega^2)\wedge(\beta_1 \wedge \omega^1 +\beta_2\wedge\omega^2),\\
& =  -(d\omega^4 +(-C\beta_1+\epsilon\beta_2)\wedge \omega^1) \wedge \omega^2
\end{aligned}
\end{equation}
which in turn imply that there exists some function $E$ such that 
\begin{equation}
d\omega ^4  =-(-C\beta_1+\epsilon\beta_2)\wedge\omega^1 - (\beta_1+C\beta_2)\wedge\omega^2+E\omega^1\wedge\omega^2. 
\end{equation}
This now yields the structure equations 
\begin{equation} 
d\omega^{(1)}=-\begin{bmatrix} 0 & 0 & 0 & 0 \\ 0 & 0 & 0 & 0 \\ \beta_1 & \beta_2 & 0 & 0 \\ -C\beta_1+\epsilon\beta_2 & \beta_1+C\beta_2 & 0 & 0  \end{bmatrix}\wedge \omega^{(1)} + \begin{bmatrix} \omega^1\wedge \omega^4 -C\omega^1\wedge \omega^3 +\omega^2\wedge \omega^3\\
\omega^2\wedge \omega^4 +\epsilon\omega^1\wedge\omega^3 + C\omega^2 \wedge \omega^3\\ 0 \\ E \omega^1\wedge \omega^2 \end{bmatrix}.  
\end{equation}
 Notice that the $E\,\omega^1\wedge\omega^2$ torsion term is actually entirely absorbable via 
\begin{equation}
\begin{aligned}
\beta_1 &\mapsto \beta_1 + \frac{1}{2}E\omega^1,\\
\beta_2 &\mapsto \beta_2 -\frac{1}{2}\epsilon\,E\omega^2,
\end{aligned}
\end{equation}
and therefore 
\begin{equation} 
d\omega^{(1)}=-\begin{bmatrix} 0 & 0 & 0 & 0 \\ 0 & 0 & 0 & 0 \\ \beta_1 & \beta_2 & 0 & 0 \\ -C\beta_1+\epsilon\beta_2 & \beta_1+C\beta_2 & 0 & 0  \end{bmatrix}\wedge \omega^{(1)} + \begin{bmatrix} \omega^1\wedge \omega^4 -C\omega^1\wedge \omega^3 +\omega^2\wedge \omega^3\\
\omega^2\wedge \omega^4 +\epsilon\omega^1\wedge\omega^3 + C\omega^2 \wedge \omega^3\\ 0 \\ 0 \end{bmatrix}.  
\end{equation}
Since all remaining torsion is constant, then we have no opportunity to normalize the group any further. As such, we need to test for involutivity. The Cartan characters are $s_1=2$ and $s_2=s_3=s_4=0$. As before, if we take $\beta_1'$ and $\beta_2'$ as different 1-forms on the fibers, we find again by Cartan's lemma that 
\begin{equation}
\begin{bmatrix}\beta_1\\ \beta_2\end{bmatrix}=\begin{bmatrix} \beta'_1\\ \beta'_2 \end{bmatrix}+\begin{bmatrix} h_{11} & h_{12}\\ h_{12} & h_{22} \end{bmatrix}\begin{bmatrix} \omega_1\\ \omega_2 \end{bmatrix}. 
\end{equation}
for some functions $h_{ij}$. However, we now have contribution from the $d\omega^4$ and substituting the above yields 
\begin{equation}
(-Ch_{12}+\epsilon h_{22}-h_{11}-C\,h_{12})\omega^1\wedge \omega^2=0
\end{equation}
and therefore we go from freedom in three variables for the pseudo-connection to only two and therefore $\text{dim}A^{(1)}=2$, where we have recylced the notation of $A$ as the new tableau. Thus, we have that $\text{dim}A^{(1)}=s_1+2\,s_2+3\,s_3+4\,s_4=2$ and hence we have involutivity with $s_1=2$ giving us the two functions of one variable function count for bicontactomorhisms. Thus, again by application of the Cartan-K\"ahler theorem we have equivalence in the analytic category.
\end{proof}
\begin{ex}\label{T^2xR ex}
A simple example of the constant $C$ case for hyperbolic bi-contact structures is the following: let $M=\mathbb{T}^2\times\mathbb{R}$ with coordinates $(\theta,\varphi,z)$ and consider 
\begin{equation*}
\begin{aligned}
\omega^1&=f(z)d\theta + g(z)d\varphi + (g(z)-2\,C\,f(z))dz,\\
\omega^2&=(2\,C\,f(z) - g(z))d\theta - f(z)d\varphi + g(z)dz,\\
\omega^3&=(f(z)g'(z)-f'(z)g(z))dz,
\end{aligned}
\end{equation*}
for any $f,g\in C^1(\mathbb{R})$ such that $f(z)^2-g(z)^2+2Cf(z)g(z)=1$. 
\end{ex}
It is straightforward to check that the constant $C$ is indeed the invariant $C$ and that $\omega^2\wedge d\omega^2=-\omega^1\wedge d\omega^1$. This demonstrates that $M$ admits infinitely many bi-contactomorphism classes parameterized by $\mathbb{R}$. Additionally, one may explicitly parameterize the family of hyperbolas determined by $f(z)^2-g(z)^2+2Cf(z)g(z)=1$ via 
\begin{equation}
\begin{bmatrix}f(z)\\ g(z)\end{bmatrix}=\begin{bmatrix} \cosh(\psi) & \sinh(\psi)\\ -\sinh(\psi) & \cosh(\psi) \end{bmatrix}^{-1}\begin{bmatrix}\cosh\left(\Psi(z)\right)\\ \sinh\left(\Psi(z)\right)\end{bmatrix}
\end{equation}
for some constant $\psi$ such that $C=\sinh(2\psi)$ and some smooth function $\Psi(z)$. Finally, we observe that the volume form is given by
\[
\Omega=-\cosh(2\psi)\frac{d\Psi}{dz}d\theta\wedge d\varphi \wedge dz
\]
and hence it is possible to choose $\Psi(z)$ so that the manifold has finite volume. 

\subsection{\texorpdfstring{$(-\epsilon )$}{-epsilon}-Cartan Structures}
There is an additional special case introduced in \cite{GeigesTautCirc} and investigated some in \cite{PerroneBiContactHyp}. 
\begin{defn}
A $(-\epsilon )$-Cartan structure is a bi-contact structure such that 
\[
\omega^1\wedge d\omega^2=\omega^2 \wedge d\omega^1 =0.
\]
\end{defn}
Clearly, such structures must also have $C=0$. However, we have additional rigidity imposed on bi-contactomorphism equivalence, in particular, we will be forced to take the $\alpha$ 1-form in the structure equations \eqref{Struct Eq 1} to be basic, so $(-\epsilon)$ Cartan structures are not properly covered by Theorem \ref{constant-C-equiv}.  There is an immediate class of examples of $(-\epsilon)$-Cartan structures: the solder forms $(\theta^1,\theta^2)$ on the orthonormal frame bundle of a pseudo-Riemannain surface $\Sigma$ with metric $g_{-\epsilon}=(\underline{\theta}^1)^2-\epsilon(\underline{\theta}^2)^2$ where $\pi^* \underline{\theta}^i=\theta^i,\,\,\,i=1,2$ and $\pi: \mcal{F}\Sigma^2\to\Sigma^2$

 We attribute the following theorem to Perrone for his structure equations in item (iii) of theorem 7.1 from \cite{PerroneBiContactHyp} and also to Geiges and Gonzalo who also made the observation in \cite{GeigesTautCirc}. Though we have additionally uncovered the equivalence under bi-contactomorphisms. 
\begin{thm}
Let $(M,\omega^1,\omega^2)$ be a $(-\epsilon)$-Cartan structure. Then there exists a pseudo-Riemannian surface $(\Sigma^2,g_{-\epsilon})$ where $g_{-\epsilon}=(\underline{\theta}^1)^2-\epsilon(\underline{\theta}^2)^2$ such that $(M,\omega^1,\omega^2)$ is bi-contactomorphic to $(\mcal{F}\Sigma^2,\theta^1,\theta^2)$ and vice versa i.e. given a surface there exists a corresponding bi-contactomorphism class that is a $(-\epsilon)$-Cartan structure. 
\end{thm}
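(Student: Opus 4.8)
The plan is to establish the correspondence in both directions, using the $(-\epsilon)$-Cartan condition to first pin down the local structure equations of $(M,\omega^1,\omega^2)$, and then to recognize those equations as exactly the structure equations of the orthonormal frame bundle of a pseudo-Riemannian surface. First I would observe that a $(-\epsilon)$-Cartan structure has $C=0$ (as already noted in the text), and moreover that $\omega^1\wedge d\omega^2=\omega^2\wedge d\omega^1=0$ forces, together with the $1$-adapted normalization $\omega^1\wedge d\omega^1=-\epsilon\,\omega^2\wedge d\omega^2=\Omega$, the refined structure equations
\begin{equation*}
\begin{aligned}
d\omega^1&=\omega^2\wedge\omega^3,\\
d\omega^2&=\epsilon\,\omega^1\wedge\omega^3,
\end{aligned}
\end{equation*}
after possibly absorbing torsion into $\omega^3$; here I would argue that the remaining freedom in the choice of the completing form $\omega^3$ can be used exactly as in the Cartan reduction of Section \ref{sec-eq-prob} to eliminate the $\omega^1\wedge\omega^3$ term from $d\omega^1$ and the $\omega^2\wedge\omega^3$ term from $d\omega^2$. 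From $d^2\omega^1=0$ and $d^2\omega^2=0$ one then gets $d\omega^3\wedge\omega^1=d\omega^3\wedge\omega^2=0$, so $d\omega^3=K\,\omega^1\wedge\omega^2$ for a function $K$ on $M$, and $d^2\omega^3=0$ gives $dK=K_1\omega^1+K_2\omega^2$, i.e. $K$ is constant along the $\omega^3$-direction.

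Next I would recall the structure equations of the orthonormal frame bundle $\mathcal{F}\Sigma^2\to\Sigma^2$ of a surface with metric $g_{-\epsilon}=(\underline{\theta}^1)^2-\epsilon(\underline{\theta}^2)^2$: writing $\theta^1,\theta^2$ for the solder forms and $\theta^3$ for the Levi-Civita connection form, the first structure equations are $d\theta^1=\theta^2\wedge\theta^3$, $d\theta^2=\epsilon\,\theta^1\wedge\theta^3$ (the signs reflecting the signature), and the second structure equation is $d\theta^3=-\kappa\,\theta^1\wedge\theta^2$ where $\kappa\circ\pi$ is the Gauss curvature pulled back from $\Sigma$. These match the equations derived in the previous paragraph with $\kappa\circ\pi = -K$. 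To prove the theorem I would then invoke a local equivalence theorem for $e$-structures (Cartan's technique, as in \cite{OlverEquivalence,GardnerEquivalence}): two coframes with the same structure functions and the same relations among their derivatives are locally equivalent, and here both coframes $(\omega^1,\omega^2,\omega^3)$ and $(\theta^1,\theta^2,\theta^3)$ satisfy identical structure equations with the single structure function $K$ (resp. $-\kappa$), each annihilated by the third coframe vector. This yields a local bicontactomorphism intertwining $(\omega^1,\omega^2)$ with $(\theta^1,\theta^2)$, and since $\theta^1,\theta^2$ descend to define $g_{-\epsilon}$, one reads off the surface $\Sigma$ directly (locally it is the leaf space of the foliation by the $\omega^3$-flow, with metric $(\underline{\theta}^1)^2-\epsilon(\underline{\theta}^2)^2$ well-defined since $\omega^1,\omega^2$ are basic for that foliation). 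The converse direction—that $(\mathcal{F}\Sigma^2,\theta^1,\theta^2)$ is indeed a $(-\epsilon)$-Cartan structure—is immediate from the displayed structure equations, since $\theta^1\wedge d\theta^2 = \theta^1\wedge(\epsilon\,\theta^1\wedge\theta^3)=0$ and similarly $\theta^2\wedge d\theta^1=0$, while $\theta^1\wedge d\theta^1=\theta^1\wedge\theta^2\wedge\theta^3\neq 0$ shows both are contact.

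The main obstacle I anticipate is the passage from the global object $M$ to the surface $\Sigma$: the equivalence theorem for $e$-structures is inherently local, so producing $\Sigma$ requires showing that the $\omega^3$-direction integrates to a foliation with Hausdorff (at least locally) leaf space and that $\omega^1,\omega^2$ genuinely push forward—this is where co-orientability and the structure equation $d\omega^3=K\,\omega^1\wedge\omega^2$ (which makes $\ker\omega^1\cap\ker\omega^2$ the tangent to the fibers) are used. A secondary subtlety is the normalization/absorption argument reducing the general $1$-adapted equations to the clean form above: one must check that the torsion-absorbing modification of $\omega^3$ does not reintroduce $\omega^1\wedge\omega^2$ terms into $d\omega^1$ or $d\omega^2$, which follows from the vanishing $\omega^1\wedge d\omega^2=\omega^2\wedge d\omega^1=0$ precisely because that kills the would-be $\omega^1\wedge\omega^2$ torsion in $d\omega^1$ and $d\omega^2$. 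Once these two points are handled, the rest is bookkeeping with the structure equations, and attribution to Perrone and to Geiges–Gonzalo is as indicated in the statement.
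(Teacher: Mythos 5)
Your route is essentially the paper's: impose the $(-\epsilon)$-Cartan condition on the $1$-adapted structure equations, normalize the residual torsion to reach $d\omega^1=\omega^2\wedge\omega^3$, $d\omega^2=\epsilon\,\omega^1\wedge\omega^3$, $d\omega^3=K\,\omega^1\wedge\omega^2$ with $dK\wedge\omega^1\wedge\omega^2=0$, and recognize these as the structure equations of the orthonormal (co)frame bundle of a pseudo-Riemannian surface; your converse check and your extra care about recovering $\Sigma$ as a local leaf space go slightly beyond what the paper writes out, and are welcome.

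Two local slips should be repaired, though neither is fatal. First, you have the normalization mechanics partly reversed: the conditions $\omega^2\wedge d\omega^1=0$ and $\omega^1\wedge d\omega^2=0$ kill, automatically and with no choice of coframe, the $\omega^1\wedge\omega^3$ coefficient of $d\omega^1$ and the $\omega^2\wedge\omega^3$ coefficient of $d\omega^2$ (equivalently, they force $\alpha$ to be semibasic in $\omega^1,\omega^2$, which is how the paper phrases it); they say nothing about the $\omega^1\wedge\omega^2$ coefficients, since $\omega^i\wedge(\omega^1\wedge\omega^2)=0$. It is the residual freedom $\omega^3\mapsto\omega^3+c_1\omega^1+c_2\omega^2$ (the parameters $b_1,b_2$ in $G_1$, used in the paper via $d^2\omega^1=d^2\omega^2=0$ to set $A_1=A_2=0$) that removes the $\omega^1\wedge\omega^2$ torsion, so your closing sentence attributing that removal to the Cartan condition is backwards. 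Second, in the descent to $\Sigma$: $\omega^1,\omega^2$ are \emph{not} basic for the foliation tangent to $e_3$ (the structure equations give $\mathcal{L}_{e_3}\omega^1=-\omega^2$ and $\mathcal{L}_{e_3}\omega^2=-\epsilon\,\omega^1$, consistent with $e_3$ generating the fiber rotation/boost), so the metric does not descend because the individual forms do; rather, the symmetric tensor $(\omega^1)^2-\epsilon(\omega^2)^2$ is semibasic and, by the displayed Lie derivatives, invariant under the $e_3$-flow, hence basic, and that is what defines $g_{-\epsilon}$ on the local leaf space. With these corrections your argument matches the paper's proof and conclusion.
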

\begin{proof}
The proof follows by continuing Cartan's method of equivalence. We start from the structure equations \eqref{Struct Eq 1} where we have that $C=0$. The $(-\epsilon )$-Cartan structure requirement means that we must impose 
\[
\omega^1 \wedge d\omega^2 = \omega^2 \wedge d\omega^1=0
\]
on the structure equations. Doing so yields 
\[
\alpha\wedge \omega^1\wedge \omega^2 =0
\]
meaning that $\alpha$ must in fact be basic and depend solely on $\omega^1$ and $\omega^2$. As such we set $\alpha=A_1\omega^1+A_2\omega^2$ for some $A_1$ and $A_2$ on the coframe bundle. We now have the following structure equations 
\begin{equation}
 d\begin{bmatrix}
 \omega^1\\ \omega^2\\ \omega^3
 \end{bmatrix}
 =
  -\begin{bmatrix}
   0 & 0 & 0\\
   0 & 0 & 0\\
   \beta_1 & \beta_2 & 0\\
  \end{bmatrix}\wedge
 \begin{bmatrix}
 \omega^1\\ \omega^2\\ \omega^3
 \end{bmatrix}+
\begin{bmatrix}
A_2\omega^1\wedge\omega^2+ \omega^2\wedge\omega^3\\  -A_1\omega^1\wedge\omega^2 +\epsilon \omega^1\wedge\omega^3\\ 0
\end{bmatrix}.
\end{equation}
Notice that 
\begin{equation*}
\begin{aligned}
d^2\omega^1&=(dA_2-A_1\omega^3+\beta_1)\wedge \omega^1\wedge \omega^2,\\
d^2\omega^2&=-(dA_1-\epsilon A_2\omega^3+\epsilon \beta_2)\wedge \omega^1\wedge \omega^2.
\end{aligned}
\end{equation*}
Thus we can normalize the torsion terms $A_1$ and $A_2$ to zero. As such, $\beta_1$ and $\beta_2$ become basic, but in such way that $\beta_i \equiv 0\mod \{\omega^1,\omega^2\}$ so that $d^2\omega^1=d^2\omega^2=0$. As such, if we take
\begin{equation*}
\begin{aligned}
\beta_1&=B_1\omega^1+\frac{1}{2}\left(B_3+K\right)\omega^2,\\
\beta_2&=\frac{1}{2}\left(K-B_3\right)\omega^1+B_2\omega^2,
\end{aligned}
\end{equation*}
where $B_i$, $1\leq i\leq k$ and $K$ are new torsion terms. This results in the following structure equations 
\begin{equation*}
\begin{aligned}
d\omega^1&=\omega^2 \wedge \omega^3,\\
d\omega^2&=\epsilon \omega^1\wedge \omega^3,\\
d\omega^3&=K\omega^1\wedge\omega^2. 
\end{aligned}
\end{equation*}
Moreover, in order that $d^2\omega^3=0$ we must have $dK\wedge \omega^1\wedge \omega^2=0$ everywhere. These are precisely the structure equations for the coframe bundle of a Riemannian or Lorentzian surface (depending on the sign of $\epsilon $) where $\omega^1$ and $\omega^2$ make up the orthonormal coframe for a surface $\Sigma$ and $\omega^3$ plays the role of the connection 1-form and $K$ is the Gaussian curvature. 
\end{proof}
In \cite{PerroneBiContactHyp} Perrone mentions this relation but only in the context of Webster curvature (which reduces to Gaussian curvature in special cases), yielding some nice results about associated contact metrics. Moreover, the structure equations Perrone presents have a factor of two that can be eliminated to resemble the above structure equations by scaling the contact 1-forms by one-half.

\section{Two Local Normal Forms}\label{sec-norm-form}
In this section we present some special circumstances of the non-zero $C_3$ case. In \cite{Jackman-Bi-Contact}, the author finds a number of several other normal forms which are simpler in nature. The author of \cite{Jackman-Bi-Contact} and the first author of this paper are working on extending these results and other general facets of bi-contact geometry. First, we remark that several combinations of the invariant functions may be used to form local invariant coordinate systems, implying the existence of local normal forms. We prove the existence of at least one invariant coordinate system for a bi-contact structure but are unable to find a normal form due to the complexity of the associated PDEs; however, when we consider a further special case ($A_1=A_2=0$ identically on $M$) we are able recover a local normal form.
Secondly, in the special case that $dC\wedge dC_3=0$ with $C_3\neq0$ and $B_1=B_2=0$, we recover a normal form on the 4-dimensional bundle $\wt{M}=\mcal{B}_2$. 

\begin{prop}\label{C-coords}
Let $(\omega^1,\omega^2,\omega^3)$ constitute a coframing on a 3-manifold $M$ that has the structure equations \eqref{struct-eqs 2}. On any open $U\subset M$ such that $C_3\neq0$ there is an invariant coordinate system defined by $(x,y,z)=(C,C_3,C_{33})$ for at least one of the possibilities $\epsilon =\pm1$. Generically, such an invariant coordinate system exists for both cases of $\epsilon =\pm1$. 
\end{prop}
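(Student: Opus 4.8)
The plan is to show that the three invariant functions $C$, $C_3$, $C_{33}$ have linearly independent differentials on a nonempty (generically dense) open subset of $U$, by computing $dC$, $dC_3$, $dC_{33}$ directly from the structure equations \eqref{struct-eqs 2}. First I would dispose of the easy part: $dC = C_3\,\omega^3$ is nowhere zero on $U$, and recalling the relations $C_{31} = -\delta C_3\sin\zeta$, $C_{32} = -C_3\cos\zeta$ established just before Theorem \ref{non-const-C-struct} (equivalently, the $\omega^1\wedge\omega^3$ and $\omega^2\wedge\omega^3$ parts of $d^2C = 0$), we get $C_{31}^2 + C_{32}^2 = C_3^2 \neq 0$ on $U$, so that $dC\wedge dC_3 = C_3\,\omega^3\wedge(C_{31}\omega^1 + C_{32}\omega^2)$ is already nonzero everywhere on $U$. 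Thus the whole question reduces to deciding when $dC_{33}$ is independent of $dC$ and $dC_3$.

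Next I would compute the $\omega^i\wedge\omega^3$ components of $d^2C_3 = 0$. Writing $d\zeta = \zeta_1\omega^1 + \zeta_2\omega^2 + \zeta_3\omega^3$ (the explicit $\zeta_1,\zeta_2$ from Proposition \ref{der-conditions}(2) are not needed — only $\zeta_3$ survives, so the auxiliary function $W$ never enters), expanding $d\bigl(-\delta C_3\sin\zeta\,\omega^1 - C_3\cos\zeta\,\omega^2 + C_{33}\,\omega^3\bigr) = 0$ with \eqref{struct-eqs 2} yields, after a short calculation,
\begin{align*}
C_{331} &= -2\delta\sin\zeta\,C_{33} + \delta C_3\sin\zeta\,(A_3 - C) - \delta C_3\cos\zeta\,\zeta_3 + \epsilon C_3\cos\zeta,\\
C_{332} &= -2\cos\zeta\,C_{33} + C_3\cos\zeta\,(A_3 + C) + C_3\sin\zeta\,\zeta_3 + \delta C_3\sin\zeta.
\end{align*}
Using $dC = C_3\,\omega^3$ to annihilate every $\omega^3$ contribution, the triple wedge collapses:
\begin{align*}
dC\wedge dC_3\wedge dC_{33} &= C_3^{\,2}\bigl(\cos\zeta\,C_{331} - \delta\sin\zeta\,C_{332}\bigr)\,\omega^1\wedge\omega^2\wedge\omega^3\\
&= C_3^{\,3}\bigl(\epsilon\cos^2\zeta - \sin^2\zeta - \delta(2C\sin\zeta\cos\zeta + \zeta_3)\bigr)\,\omega^1\wedge\omega^2\wedge\omega^3,
\end{align*}
where the $C_{33}$- and $A_3$-dependence cancels. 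Hence $(C,C_3,C_{33})$ is a coordinate system exactly on the open set where $P_\epsilon := \epsilon\cos^2\zeta - \sin^2\zeta - \delta(2C\sin\zeta\cos\zeta + \zeta_3)$ is nonzero.

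Finally I would rule out $P_\epsilon \equiv 0$ for both signs at once. Since $P_{+1} - P_{-1} = 2\cos^2\zeta$, if $P_{+1}$ and $P_{-1}$ both vanished identically on $U$ then $\cos\zeta\equiv 0$ there, forcing $\zeta$ locally constant, hence $\zeta_3 \equiv 0$, and then $P_{\pm 1} = -1 \neq 0$ — a contradiction. Therefore for at least one sign of $\epsilon$ we have $P_\epsilon\not\equiv 0$, so $(C,C_3,C_{33})$ is an invariant coordinate system on a nonempty (and, in the analytic category, dense) open subset of $U$. Moreover $P_\epsilon \equiv 0$ is precisely the non-generic constraint $\zeta_3 = \delta\bigl(\epsilon\cos^2\zeta - \sin^2\zeta\bigr) - 2C\sin\zeta\cos\zeta$ tying the structure function $\zeta_3$ to $C$ and $\zeta$; away from that degeneracy $P_\epsilon\not\equiv 0$ for both signs, which gives the ``generically for both'' clause.

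The only genuine obstacle is the bookkeeping in the middle step — keeping track of which pieces of $d\zeta$, $d\omega^1$, $d\omega^2$, $d\omega^3$ feed the $\omega^i\wedge\omega^3$ parts of $d^2C_3$ — and the simplification that makes the argument clean is the cancellation of the $C_{33}$- (and $W$-) dependence in the final wedge, which reduces the entire obstruction to the elementary trigonometric polynomial $P_\epsilon$, whose non-degeneracy is then immediate.
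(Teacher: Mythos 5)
Your computation is correct and is essentially the paper's: you derive $C_{31},C_{32}$ from $d^2C=0$, extract $C_{331},C_{332}$ from the $\omega^i\wedge\omega^3$ parts of $d^2C_3=0$ (correctly noting that $\zeta_1,\zeta_2$ and hence $W$ drop out), and your final wedge $dC\wedge dC_3\wedge dC_{33}=C_3^3\,P_\epsilon\,\Omega$ agrees exactly with the paper's $\Omega_C^\epsilon=-C_3^3\bigl(1-(1+\epsilon)\cos^2\zeta+2\delta C\sin\zeta\cos\zeta+\delta\zeta_3\bigr)\Omega$, with $P_\epsilon=0$ being precisely the paper's degeneracy PDE \eqref{zeta-PDE}. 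The only real divergence is the endgame. The paper assumes the PDE holds identically for the given $\epsilon$, substitutes it to get $\Omega_C^{-\epsilon}=-2\epsilon C_3^3\cos^2\zeta\,\Omega$, and then argues via the $\mathrm{arccot}/\mathrm{arccsc}$ constraints at points with $\zeta_3=0$ that this is \emph{never} zero on $U$; you instead use $P_{+1}-P_{-1}=2\cos^2\zeta$ to rule out simultaneous identical vanishing, which is shorter and cleaner but delivers a weaker conclusion: nonvanishing of one of the two only on a nonempty (dense in the analytic and connected setting) open subset of $U$, rather than the paper's pointwise ``never zero'' claim. Note in particular that a pointwise dichotomy genuinely fails: at a point with $\cos\zeta=0$ and $\zeta_3=-\delta$ both $P_{+1}$ and $P_{-1}$ vanish, so your ``identically'' formulation is the honest one, but you should flag that your argument proves the proposition only in this slightly weakened (open-dense) sense, whereas the paper's stated conclusion — and its extra analysis of the locus $\cos\zeta=0$ under the PDE — aims at nonvanishing on all of $U$; the ``generically both'' clause is handled the same way in both arguments.
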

\begin{proof}
We need to find an expression for $dC_{33}$, which we achieve by checking the conditions imposed by $d^2C_3=0$. By the structure equations from Theorem \ref{non-const-C-struct} and Proposition \ref{der-conditions} we find that 
\begin{equation}
\begin{aligned}
dC_{33}&=(C_3(\zeta_3-\delta \epsilon )\cos(\zeta)+(2C_{33}+C_3C-C_3A_3)\sin(\zeta))\omega^1\\
\,&+(-C_3(\zeta_3+\delta )\sin(\zeta)+\delta (2C_{33}-C_3C-C_3A_3)\cos(\zeta))\omega^2+C_{333}\omega^3.
\end{aligned}
\end{equation}
We also have
\begin{equation}
\begin{aligned}
dC\wedge dC_3&=C_3^2(\sin(\zeta)\omega^1\wedge\omega^3+\delta \cos(\zeta)\omega^2\wedge \omega^3),
\end{aligned}
\end{equation}
and it now follows that 
\begin{equation}
\begin{aligned}
\Omega_C^{\epsilon }&= -C_{3}^3(1-(1+\epsilon )\cos^2(\zeta)+2\delta C\sin(\zeta)\cos(\zeta)+\delta \zeta_3)\omega^1\wedge\omega^2\wedge\omega^3,
\end{aligned}
\end{equation}
where $\Omega_C^{\epsilon }=dC\wedge dC_3\wedge dC_{33}$. Thus, aside from $C_3\neq 0$, the function $\zeta$ must not solve the following PDE
\begin{equation}\label{zeta-PDE}
\zeta_3=-\delta +\delta (1+\epsilon )\cos^2(\zeta)-2C\sin(\zeta)\cos(\zeta),
\end{equation}
otherwise, the 3-form $\Omega_C^{\epsilon }$ is zero when this is the case. Assume $\zeta$ solves \eqref{zeta-PDE} on some open $U\subset M$. Then the 3-form $\Omega_C^{-\epsilon }$ has the expression
\begin{equation}
\Omega_C^{-\epsilon }=-2C_3^3\cos^2(\zeta)\omega^1\wedge\omega^2\wedge\omega^3. 
\end{equation}
Thus, so long as $C_3\neq0$ and $\zeta\neq \frac{(2k+1)\pi}{2}$ the volume form $\Omega^{-\epsilon }_C$ is nonzero. In fact, we can go further. If $\zeta$ satisfies the PDE \eqref{zeta-PDE}, and $p\in M$ any point where $\zeta_3\big|_p=0$, then $\zeta_p=\zeta\big|_p$ and $C_p=C\big|_p$ must satisfy
\begin{equation}
-\delta +\delta (1+\epsilon )\cos^2(\zeta_p)=2C_p\sin(\zeta_p)\cos(\zeta_p),
\end{equation}
equivalently
\begin{equation}
\epsilon \cos^2(\zeta_p)-\sin^2(\zeta_p)=\delta C_p\sin(2\zeta_p).
\end{equation}
For $\epsilon =1$ this yields that 
\begin{equation}
\zeta_p=\frac{1}{2}\text{arccot}(\delta  C_p),
\end{equation}
which means that $\zeta_p$ can only ever take values in the open intervals $(n\pi/2,(n+1)\pi/2)$ for $n\in \mathbb{Z}$. In case $\epsilon =-1$, we have instead 
\begin{equation}
\zeta_p=\frac{1}{2}\text{arccsc}(-\delta C_p).
\end{equation}
Thus, $\zeta_p$ can only take values in the intervals $[n\pi/2-\pi/4, n\pi/2) \cup (n\pi/2,n\pi/2+\pi/4]$. Therefore, for $C_3\neq0$ on any open $U\subset M$, whenever $\Omega_C^{\epsilon }=0$ then $\Omega_C^{-\epsilon }$ is never zero. 
\end{proof}

\begin{remark}
The PDE \eqref{zeta-PDE} may be transformed into the equivalent PDEs
\begin{equation}\label{alt-zeta-PDE}
\begin{aligned}
\partiald{H}{x}&=\frac{1}{y}(1-x\sinh(2H)),\quad \epsilon =1\\
\partiald{G}{x}&=-\frac{1}{y}(1+x\sin(2G)), \quad \epsilon =-1,
\end{aligned}
\end{equation}
where $x=C$ and $y=C_3$. In particular, the PDE for function $H$ may be transformed to \textit{almost} the PDE for $G$. Indeed, let $\widehat{H}=i\,H$ and $x\mapsto -i\,\widehat{x}$. Then the PDE for $H$ becomes 
\[
\partiald{\widehat{H}}{\widehat{x}}=\frac{1}{y}(1+\widehat{x}\sin(2\widehat{H})).
\] 
It is interesting to note that, although the PDEs \eqref{alt-zeta-PDE} are nearly related by complex rotations, the square of the derivatives may be transformed to one another, at the cost of swapping branches. 
\end{remark}

Now we restrict to a special case and find a local normal form. As a consequence, we manage to uncover an additional bi-contact structure. First, an important lemma. 

\begin{lem}\label{C-zeta}
Let $M$ be a 3-manifold with bi-contact structure satisfying the structure equations \eqref{struct-eqs 2}. If $A_1=A_2=0$ on an open subset $U$ of $M$ then $dC\wedge d\zeta =0$ on $U$. Moreover, 
\begin{enumerate}
\item If $\epsilon =-1$ then $C=-\csc(2\zeta)$ and 
\item if $\epsilon =1$ then $C=\cot(2\zeta)$. 
\end{enumerate}
\end{lem}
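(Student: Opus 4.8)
The plan is to carry the differential consequences of the hypothesis $A_1=A_2=0$ one step past the structure equations \eqref{struct-eqs 2}: from $d^2\omega^3=0$, $d^2\omega^1=d^2\omega^2=0$, $d^2A_3=0$ and $d^2\zeta=0$ I will extract enough algebraic and first-order relations among $C$, $\zeta$ and $A_3$ to force $C$ to be a fixed function of $\zeta$, after which $dC\wedge d\zeta=0$ is automatic.

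First I would set $\alpha=A_3\omega^3$ (which is what $A_1=A_2=0$ means) and record that $dC=C_3\omega^3$ gives $C_1=C_2=0$. Expanding $d^2\omega^3=0$ in the resulting simplified equations produces the single relation $\sin\zeta\,\zeta_1+\delta\cos\zeta\,\zeta_2=0$, so there is a function $W$ on $U$ with $\zeta_1=W\cos\zeta$ and $\zeta_2=-\delta W\sin\zeta$ (equivalently, Proposition \ref{der-conditions}(2) specialized to $A_1=A_2=0$). Likewise $d^2\omega^1=0$ and $d^2\omega^2=0$ give
\[
(A_3)_1=\epsilon\cos\zeta-\delta(A_3+C)\sin\zeta,\qquad (A_3)_2=\delta\sin\zeta-(A_3-C)\cos\zeta .
\]
Then I would impose the $\omega^1\wedge\omega^2$-component of $d^2A_3=0$, i.e.\ the mixed-partial identity $e_2\!\big((A_3)_1\big)=e_1\!\big((A_3)_2\big)$; substituting the two displayed formulas together with $\zeta_1=W\cos\zeta$, $\zeta_2=-\delta W\sin\zeta$, the $A_3$-terms cancel and one is left with an identity in $W,C,\zeta$ alone, namely $(W-\delta)\big(C\sin 2\zeta-\delta\cos 2\zeta\big)=0$ when $\epsilon=1$, and $W\big(C\sin 2\zeta-\delta\big)=\delta C\sin 2\zeta+1$ when $\epsilon=-1$. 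Finally, the $\omega^1\wedge\omega^2$-component of $d^2\zeta=0$ yields the first-order relation $\sin\zeta\,W_1+\delta\cos\zeta\,W_2+W^2=0$.

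These three outputs finish the argument. If $\epsilon=1$, on the open set where $C\sin 2\zeta-\delta\cos 2\zeta\neq0$ the identity forces $W\equiv\delta$, hence $W_1=W_2=0$ there, and the last relation degenerates to $\delta^2=0$; so that set is empty, $C\sin 2\zeta=\delta\cos 2\zeta$, and under the normalization $\delta=1$ this reads $C=\cot 2\zeta$. If $\epsilon=-1$, then $C\sin 2\zeta=\delta$ is impossible (it would give $0=2$), so $W=(\delta C\sin 2\zeta+1)/(C\sin 2\zeta-\delta)$ is a smooth function of $u:=C\sin 2\zeta$; since $C_1=C_2=0$ one computes $\sin\zeta\,u_1+\delta\cos\zeta\,u_2=0$, whence $\sin\zeta\,W_1+\delta\cos\zeta\,W_2=0$ and the last relation collapses to $W^2=0$, so $W=0$ and $\delta C\sin 2\zeta+1=0$, i.e.\ $C=-\csc 2\zeta$ for $\delta=1$. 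In either case $C$ is a function of $\zeta$ (and $W=0$, so $\zeta_1=\zeta_2=0$ and $d\zeta=\zeta_3\omega^3$ is proportional to $dC=C_3\omega^3$), hence $dC\wedge d\zeta=0$ on $U$.

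The delicate step — and the one I expect to be the main obstacle to presenting cleanly — is the $d^2A_3=0$ computation: one must verify that $A_3$ truly drops out of the $\omega^1\wedge\omega^2$-component, since this is exactly what makes the resulting relation closed in $(W,C,\zeta)$ and lets the argument terminate without a further prolongation. Everything else is routine exterior-derivative bookkeeping, and the final passage from the two $W$-identities to "$W\equiv0$" is an elementary open/closed (equivalently, function-of-one-variable) argument.
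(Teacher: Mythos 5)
Your proposal is correct and follows essentially the same route as the paper's proof: the cascade $d^2\omega^3=0$ (giving $\zeta_1,\zeta_2$ in terms of $W$), $d^2\omega^1=d^2\omega^2=0$ (giving $(A_3)_1,(A_3)_2$), the $\omega^1\wedge\omega^2$-component of $d^2A_3=0$ (the algebraic relation among $W,C,\zeta$, which indeed has the $A_3$-terms cancel), and finally $d^2\zeta=0$ together with $C_1=C_2=0$ to force $W=0$ when $\epsilon=-1$. Your only deviations are cosmetic improvements in bookkeeping: you dispose of the degenerate branch ($W+1=0$ in the paper's sign convention) explicitly in the $\epsilon=1$ case, where the paper simply divides, and you replace the paper's explicit computation of $W_1,W_2$ from $dC=C_3\omega^3$ by a chain-rule argument through $u=C\sin 2\zeta$; both lead to the same conclusion.
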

\begin{proof}
The proof is an exercise in checking $d^2=0$ conditions. Moving forward with our computation, we find that since $A_1=A_2=0$
\[
\zeta_1\sin(\zeta)+\zeta_2\cos(\zeta)=0,
\]
where subscripts on $\zeta$ denote covariant derivatives. This implies the existence of a function $W$ such that 
\[
\zeta_1=-W\cos(\zeta)\text{ and }\zeta_2=W\sin(\zeta). 
\]
Next, we assign some covariant derivatives of $A_3$ by inspecting $d^2\omega^1$ and $d^2\omega^2$. This yields
\begin{equation*}
\begin{aligned}
(A_{3})_1&=\epsilon \cos(\zeta)-(A_3+C)\sin(\zeta),\\
(A_{3})_2&=\sin(\zeta)-(A_3-C)\cos(\zeta).
\end{aligned}
\end{equation*}
We need not assign a covariant derivative for $A_3$ in the direction of $\be_3$ at this time. Indeed, it is sufficient to compute $d^2A_3 \mod \{\omega^3\}$ and so we find
\[
d^2A_3 \equiv (2(W+1)\cos(\zeta)\sin(\zeta)C-W((\epsilon +1)\cos^2(\zeta)-\epsilon )-\epsilon \cos^2(\zeta)+\sin^2(\zeta)))\omega^1\wedge \omega^2\,\text{mod}\{\omega^3\}.
\]
This forces the above coefficient of $\omega^1\wedge \omega^2$ to vanish. We may use this to solve for the invariant function $C$ and indeed this yields 
\begin{equation}\label{C-zeta-eq-1}
C=\frac{W((\epsilon +1)\cos^2(\zeta)-\epsilon )+\epsilon \cos^2(\zeta)-\sin^2(\zeta))}{2(W+1)\cos(\zeta)\sin(\zeta)}.
\end{equation}
At this point, it is now prudent to split into cases. Let us first handle the case that $\epsilon =1$. Then our expression for $C$ given by equation \eqref{C-zeta-eq-1} becomes
\[
C=\cot(2\zeta).
\]
We now have the form claimed in the Lemma. 
The case that $\epsilon =-1$ requires marginally more effort. Indeed, we find that 
\[
C=\frac{W-1}{(W+1)\sin(2\zeta)}, 
\]
and so to prove our claim we must demonstrate that $W=0$ on $M$. We calculate $dC$ and find that 
\[
dC\equiv \frac{2\sin(\zeta)W_1+W(W^2-1)\cos(2\zeta)}{(W+1)^2\sin(2\zeta)\sin(\zeta)}\omega^1+\frac{2\cos(\zeta)W_2-W(W^2-1)\cos(2\zeta)}{(W+1)^2\sin(2\zeta)\cos(\zeta)}\omega^2\,\text{mod}\{\omega^3\},
\]
where $dW\equiv W_1\omega^1+W_2\omega^2\mod\{\omega^3\}$. We recall that in this adapted coframing $dC=C_3\omega^3$ and therefore it must be the case the the above coefficients of $\omega^1$ and $\omega^2$ vanish. Indeed, this yields
\begin{equation*}
\begin{aligned}
W_1&=-\frac{W(W^2-1)\cos(2\zeta)}{2\sin(\zeta)},\\
W_2&=\frac{W(W^2-1)\cos(2\zeta)}{2\cos(\zeta)}. 
\end{aligned}
\end{equation*}
Now we investigate the conditions forced upon us by $d^2\zeta=0$. Indeed, since we now have that 
\[
d\zeta \equiv -W\cos(\zeta)\omega^1+W\sin(\zeta)\omega^2\mod\{\omega^3\}
\]
then
\begin{equation*}
\begin{aligned}
d^2\zeta &\equiv (\cos(\zeta)W_2+\sin(\zeta)W_1-W\sin(\zeta)\zeta_2+W\cos(\zeta)\zeta_1)\omega^1\wedge\omega^2\mod\{\omega^3\},\\
\,&\equiv -W^2\omega^1\wedge\omega^2 \mod\{\omega^3\}.
\end{aligned}
\end{equation*}
Thus, we must deduce that $W=0$ on $M$. Hence when $\epsilon =-1$ we have that $C=-\csc(2\zeta)$ as claimed.
\end{proof}
There is an immediate corollary of Lemma \ref{C-zeta}. 
\begin{cor}
If $(\omega^1,\omega^2)$ is a bi-contact structure a 3-manifold $M$ with $C_3\neq 0$ and $A_1=A_2=0$ identically on $M$ then $(\omega^1,\omega^2)$ is never elliptic. 
\end{cor}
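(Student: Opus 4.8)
The plan is to read off the corollary directly from Lemma \ref{C-zeta} together with the trichotomy of Proposition \ref{sig-to-taut}. Recall from Proposition \ref{sig-to-taut} that a bi-contact structure is elliptic precisely when $\epsilon=-1$ and $|C|<1$ holds at every point of $M$; in the complementary situations (namely $\epsilon=1$, or $\epsilon=-1$ with $|C|\ge1$) the structure is hyperbolic or linear. Thus it suffices, under the standing hypotheses $C_3\neq0$ and $A_1=A_2=0$ on $M$, to exclude the case $\epsilon=-1$ with $|C|<1$ somewhere.

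First I would dispose of the case $\epsilon=1$: here Proposition \ref{sig-to-taut}(3) already declares the structure hyperbolic, so there is nothing to check (in particular one does not need the expression $C=\cot(2\zeta)$ from Lemma \ref{C-zeta}, which in that case imposes no restriction on the values of $C$). The substantive case is $\epsilon=-1$. By Lemma \ref{C-zeta}, the hypotheses $C_3\neq0$ and $A_1=A_2=0$ force $C=-\csc(2\zeta)$ on $M$, where $\zeta$ is the angular function appearing in the structure equations \eqref{struct-eqs 2} through $B_1=\cos(\zeta)$ and $B_2=\delta\sin(\zeta)$. Since $|\csc(t)|\ge1$ for every $t$ at which it is defined, we get $|C|=|\csc(2\zeta)|\ge1$ at every point of $M$, so the strict inequality $|C|<1$ never holds. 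Hence, by Proposition \ref{sig-to-taut}, the structure is not elliptic.

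There is essentially no obstacle in this argument: the corollary is a short bookkeeping consequence of Lemma \ref{C-zeta} once one recalls that $\csc$ omits the open interval $(-1,1)$. The only mildly delicate point is that, because $C$ is a smooth (indeed bi-contactomorphism-invariant) function on $M$, the identity $C=-\csc(2\zeta)$ automatically keeps $\zeta$ away from the integer multiples of $\pi/2$ where $\csc(2\zeta)$ is singular; but this observation is not even needed for the conclusion, since the bound $|\csc(2\zeta)|\ge1$ is valid wherever the right-hand side makes sense.
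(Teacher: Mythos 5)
Your argument is correct and is exactly the reasoning the paper intends (it states the corollary as immediate from Lemma \ref{C-zeta}): for $\epsilon=1$ the form $\mathcal{P}_C$ is automatically indefinite, and for $\epsilon=-1$ the identity $C=-\csc(2\zeta)$ forces $|C|\ge 1$, which rules out definiteness by Proposition \ref{sig-to-taut}. Nothing further is needed.
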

\begin{thm}\label{normal-form}
Let $(\omega^1,\omega^2,\omega^3)$ be any coframing on a 3-manifold $M$ that satisfies the structure equations \eqref{struct-eqs 2} and let $\zeta_3,$ and $\zeta_{33}$ be the derivatives of $\zeta$ and $\zeta_3$ respectively in the direction dual to $\omega^3$. If $A_1=A_2=0$ identically on an open set $U$ of $M$ then there exists a local invariant coordinate system $(x,y,z)=(\zeta,\zeta_3,\zeta_{33})$ where $\zeta\in(0,\pi/4)$ and $\zeta_3\neq 0$, and arbitrary functions $f(x)\in C^3(\mathbb{R})$ and $g(x)\in C^2(\mathbb{R})$ such that the coframing $(\omega^1,\omega^2,\omega^3)$ has the form 
\begin{equation*}
\begin{aligned}
\omega^1&=\cos(x)\eta^1+\sin(x)\eta^2,\\
\omega^2&=-\sin(x)\eta^1+\cos(x)\eta^2,\\
\omega^3&=\frac{1}{y}dx,
\end{aligned}
\end{equation*}
where $\eta^1$ and $\eta^2$ are 1-forms given by
\begin{equation*}
\begin{aligned}
\eta^1&=\frac{1}{y^2}(N_1dx+N_2dy-dz),\\
\eta^2&=\frac{z}{y^2}\,dx-\frac{1}{y}dy.
\end{aligned}
\end{equation*}
The functions $N_1$ and $N_2$ are dependent on $\epsilon $.  
\begin{enumerate}
\item If $\epsilon =1$ then
\begin{equation*}
\begin{aligned}
N_1&=y^2g(x)+2y(f(x)-2\cot(2x))\csc(2x)-(f(x)^2+f'(x)+1)y^2\ln(y)-zf(x),\\
N_2&=\frac{2z-y^2f(x)-2y\csc(2x)}{y},
\end{aligned}
\end{equation*}
where $f'(x)$ denotes the derivative of $f(x)$. 
\item else if $\epsilon =-1$ then
\begin{equation*}
\begin{aligned}
N_1&=y^2g(x) -2y\cot(2x)f(x)+2y+4y\csc^2(2x)-(f(x)^2+f'(x)+1)y^2\ln(y)-zf(x),\\
N_2&=\frac{2z-y^2f(x)+y\cot(2x)}{y}.
\end{aligned}
\end{equation*}
\end{enumerate}	
\end{thm}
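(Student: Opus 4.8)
\section*{Proof proposal}

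The plan is to build invariant coordinates out of $\zeta$ and its iterated $\omega^3$-derivatives, to rotate the coframe through the angle $\zeta$, and then to read $\omega^1,\omega^2,\omega^3$ off of the resulting structure equations; the hypothesis $A_1=A_2=0$ enters only through Lemma~\ref{C-zeta}. That lemma gives $W=0$ --- so that $d\zeta=\zeta_3\,\omega^3$ --- together with $C=\cot(2\zeta)$ when $\epsilon=1$ (resp. $C=-\csc(2\zeta)$ when $\epsilon=-1$); I will also reuse the covariant derivatives $(A_3)_1,(A_3)_2$ produced in its proof. Take $\delta=1$ for definiteness (the other sign is handled identically), work on the branch $\zeta\in(0,\pi/4)$ of the statement, and on the open set where $\zeta_3\ne0$ --- this is the standing hypothesis $C_3\ne0$, since $C_3=-2\csc^2(2\zeta)\,\zeta_3$. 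Put $x=\zeta$, $y=\zeta_3$, $z=\zeta_{33}$.

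First I would note that $d\zeta=\zeta_3\,\omega^3$ gives $\omega^3=\tfrac1y\,dx$ immediately. Set $\eta^1=\cos(\zeta)\,\omega^1-\sin(\zeta)\,\omega^2$ and $\eta^2=\sin(\zeta)\,\omega^1+\cos(\zeta)\,\omega^2$, so that $\omega^1=\cos x\,\eta^1+\sin x\,\eta^2$ and $\omega^2=-\sin x\,\eta^1+\cos x\,\eta^2$ as in the statement. Substituting $\delta=1$ into $d\omega^3=\delta\sin(\zeta)\,\omega^1\wedge\omega^3+\cos(\zeta)\,\omega^2\wedge\omega^3$ and rotating gives $d\omega^3=\eta^2\wedge\omega^3$. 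Then $0=d^2\zeta=d\zeta_3\wedge\omega^3+\zeta_3\,d\omega^3$ forces $d\zeta_3=-\zeta_3\,\eta^2+\zeta_{33}\,\omega^3$, hence $\eta^2=\tfrac{z}{y^2}\,dx-\tfrac1y\,dy$. Together with the $\eta^1$-component of $d\zeta_{33}$ found below, a one-line wedge computation gives $d\zeta\wedge d\zeta_3\wedge d\zeta_{33}=-\zeta_3^{\,4}\,\eta^1\wedge\eta^2\wedge\omega^3\ne0$, so $(x,y,z)=(\zeta,\zeta_3,\zeta_{33})$ is an invariant coordinate system on this open set.

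Next I would compute $d\eta^1$ and $d\eta^2$ from the structure equations \eqref{struct-eqs 2} with $A_1=A_2=0$, plugging in $d\zeta=\zeta_3\,\omega^3$ and the relevant value of $C$. The decisive point is that once $C=\cot(2\zeta)$ (resp. $-\csc(2\zeta)$) is inserted, the trigonometric coefficients collapse, leaving $d\eta^2=-\zeta_3\,\eta^1\wedge\omega^3+\mu\,\eta^2\wedge\omega^3$ and $d\eta^1=\nu\,\eta^1\wedge\omega^3+\zeta_3\,\eta^2\wedge\omega^3$, where for $\epsilon=1$ one gets $\mu=A_3+\csc(2x)$ and $\nu=A_3-\csc(2x)$ (with the $\csc$ replaced by $\cot$-type terms when $\epsilon=-1$). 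The same collapse applied to the rotated $(A_3)_1,(A_3)_2$ shows $dA_3=\nu\,\eta^2+A_{33}\,\omega^3$ has no $\eta^1$-part; combining this with $d\zeta_3$ shows that $d\big((A_3-\csc 2x)/\zeta_3\big)$ is a multiple of $\omega^3$, i.e. $(A_3-\csc 2x)/\zeta_3$ is a function of $x$ alone, which I call $f(x)$: thus $A_3=yf(x)+\csc(2x)$. This is the conceptual heart of the argument, and it is what produces the first of the two arbitrary functions. Returning now to $d^2\zeta_3=0$ yields $d\zeta_{33}=-\zeta_3^{\,2}\,\eta^1+(\zeta_3\mu-2\zeta_{33})\,\eta^2+\zeta_{333}\,\omega^3$ for some function $\zeta_{333}$, and solving for $\eta^1$ puts it in the form $\eta^1=\tfrac1{y^2}\big(N_1\,dx+N_2\,dy-dz\big)$ with $N_2$ already in the claimed closed form.

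Finally I would impose $d\eta^1=\nu\,\eta^1\wedge\omega^3+\zeta_3\,\eta^2\wedge\omega^3$ on this coordinate expression. Matching the $dz\wedge dx$, $dy\wedge dz$ and $dx\wedge dy$ components gives $\partial_z N_1=-f$, consistency with the known $N_2$, and a first-order linear transport equation in $y$ for $P(x,y):=N_1+f(x)z$, of the shape $\partial_y P-\tfrac2y P=-y\big(f^2+f'+1\big)+(\text{explicit in }x,y)$; its general solution is $P=y^2g(x)+(\text{particular solution})$, and the integration ``constant'' $g(x)$ is the second arbitrary function. Reassembling $N_1=P-f(x)z$ reproduces the stated formula, and the case $\epsilon=-1$ runs line for line with $C=-\csc(2\zeta)$ in place of $\cot(2\zeta)$, yielding the companion formulas; the required regularity $f\in C^3$, $g\in C^2$ is precisely what is needed for the intermediate $d^2=0$ identities (which involve $f''$) and for $\eta^1$ to be of class $C^1$. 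The two steps demanding real care are the trigonometric collapses --- where the exact output of Lemma~\ref{C-zeta} is indispensable --- and recognizing and solving the transport ODE whose integration constant becomes $g$; everything else is forced.
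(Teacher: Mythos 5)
Your proposal is correct and follows essentially the paper's own route: the invariant coordinates $(x,y,z)=(\zeta,\zeta_3,\zeta_{33})$, the rotation of the coframe through the angle $\zeta$ with $\omega^3=dx/y$, the identification $A_3=yf(x)+\csc(2x)$ (resp. $A_3=yf(x)-\cot(2x)$ for $\epsilon=-1$), and a first-order linear ODE in $y$ whose integration ``constant'' is $g(x)$ --- the paper merely phrases the last two steps as coordinate PDE conditions from $d^2=0$ for $\varphi=A_3$ and $\psi=\zeta_{333}$ rather than invariantly and through $N_1$ directly. The only slip is a sign in your intermediate claim: the $\eta^2$-component of $dA_3$ is $-(A_3-\csc(2x))$, not $+\nu$, and it is exactly this sign, paired with $d\zeta_3=-\zeta_3\,\eta^2+\zeta_{33}\,\omega^3$, that makes $d\bigl((A_3-\csc(2x))/\zeta_3\bigr)$ proportional to $\omega^3$ and hence yields $f(x)$.
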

\begin{proof}
We first demonstrate that $\zeta,\zeta_3$ and $\zeta_{33}$ yield a local invariant coordinate system. Now by Lemma \ref{C-zeta} and the structure equations \eqref{struct-eqs 2} we find that $d\zeta=\zeta_3\,\omega^3$ regardless of either $\epsilon =\pm1$ case from Lemma \ref{C-zeta}. Indeed, this turns out to be true for the following computations below, and we find that only $\epsilon $ itself contributes. Differentiating again we find
\[
d\zeta_3=-\zeta_3\sin(\zeta)\omega^1-\zeta_3\cos(\zeta)\omega^2+\zeta_{33}\omega^3,
\]
and differentiating yet again we discover
\begin{equation*}
\begin{aligned}
d\zeta_{33}&=\left(-\zeta_3\left(\zeta_3+\frac{1-\epsilon }{2}\right)\cos(\zeta)+(A_3\zeta_3-2\zeta_{33})\sin(\zeta)+\frac{1}{2}\zeta_3\sec(\zeta)\right)\omega^1\\
\,&+\left((A_3\zeta_3-2\zeta_{33})\cos(\zeta)+\zeta_3\left(\zeta_3+\frac{1-\epsilon }{2}\right)\sin(\zeta)+\epsilon \frac{1}{2}\zeta_3\csc(\zeta) \right)\omega^2+\zeta_{333}\omega^3.
\end{aligned}
\end{equation*}
Thus, we now see that 
\[
d\zeta \wedge d\zeta_3\wedge d\zeta_{33} = -\zeta_3^4\,\omega^1\wedge \omega^2\wedge \omega^3. 
\]
Therefore, away from subsets of $M$ where $\zeta_3=0$, we find that $(\zeta,\zeta_3,\zeta_{33})$ forms a local coordinate system. Let us label such a coordinate system as $(x,y,z)$ where $x=\zeta,y=\zeta_3,$ and $z=\zeta_{33}$. We now wish to find expressions of $\omega^1,\omega^2$ and $\omega^3$ in terms of $dx,dy,$ and $dz$. Let us now specialize to the case of $\epsilon =-1$ and invert the relations for $dx$, $dy$ and $dz$ above. 
\begin{equation*}
\begin{aligned}
\omega^1&= \frac{1}{2y^4}\left((2yz\varphi+2y\psi-4z^2)\cos(x)+2y^2z\sin(x)-\frac{yz\cos(2x)}{\sin(x)}\right)dx\\
\,&-\frac{1}{2y^3}\left((2y\varphi-4z)\cos(x)+2y^2\sin(x)-\frac{y\cos(2x)}{\sin(x)} \right)dy-\frac{\cos(x)}{y^2}dz,\\
\omega^2&=-\frac{1}{2y^4}\left((2yz\varphi+2y\psi-4z^2)\sin(x)-2y^2z\cos(x)-\frac{yz\cos(2x)}{\cos(x)}\right)dx\\
\,&+\frac{1}{2y^3}\left((2y\varphi-4z)\sin(x)-2y^2\cos(x)-\frac{y\cos(2x)}{\cos(x)} \right)dy+\frac{\sin(x)}{y^2}dz,\\
\omega^3&=\frac{1}{y}dx, 
\end{aligned}
\end{equation*}
where $\varphi(x,y,z)=A_3$ and $\psi(x,y,z)=\zeta_{333}$. We now arrange $\omega^1$ and $\omega^2$ in terms of $\eta^1$ and $\eta^2$ as claimed and then we will recover the functions $\varphi$ and $\psi$. Notice that 
\begin{equation*}
\begin{aligned}
\omega^1&= \left(\frac{yz\varphi+y\psi-2z^2-yz\cot(2x)}{y^2}dx-\frac{y\varphi-2z-y\cot(2x)}{y}dy-dz\right)\frac{\cos(x)}{y^2}\\
\,&+\left(zdx-ydy\right)\frac{\sin(x)}{y^2}\\
\omega^2&=-\left(\frac{yz\varphi+y\psi-2z^2-yz\cot(2x)}{y^2}dx-\frac{y\varphi-2z-y\cot(2x)}{y}dy-dz\right)\frac{\sin(x)}{y^2}\\
\,&+\left(zdx-ydy\right)\frac{\cos(x)}{y^2}\\
\end{aligned}
\end{equation*}
Let
\begin{equation*}
\begin{aligned}
\eta^1&= \frac{yz\varphi+y\psi-2z^2-yz\cot(2x)}{y^4}dx-\frac{y\varphi-2z+y\cot(2x)}{y^3}dy-\frac{1}{y^2}dz,\\
\eta^2&=\frac{z}{y^2}dx-\frac{1}{y}dy.\\
\end{aligned}
\end{equation*}
Finally, we need to recover the functions $\varphi$ and $\psi$ in order to acquire the claimed form in the Theorem. Indeed, we may do so by checking $d^2=0$ conditions and solving the resulting PDEs. The $\omega^3$ 1-form does not contribute to this effort. Suppressing some calculation details, we find that $\sin(x)d^2\omega^1+\cos(x)d^2\omega^2=-\varphi_z/y^2$ and as such $\varphi$ must have no dependence on $z$. Using this fact, $\sin(x)d^2\omega^1$ reduces to
\[
\sin(x)d^2\omega^1 = \cos(2x)+(\varphi-y\varphi_y)\sin(2x). 
\]
It follows that $\varphi(x,y)$ has the form
\[
\varphi(x,y)=yf(x)-\cot(2x),
\]
where $f(x)$ is an arbitrary differentiable function. 
To determine $\psi(x,y,z)$, we simply compute $d\omega^1$ and $d\omega^2$ in terms of $\omega^1,\omega^2,$ and $\omega^3$ and solve the PDE conditions that are forced upon $\psi(x,y,z)$ in order to satisfy the structure equations. Again suppressing details, we find that 
\[
\omega^2\wedge d\omega^1 - \omega^1\wedge d\omega^2 = \psi_z-\frac{4z}{y}
\]
and therefore $\psi(x,y,z)=-2z\varphi(x,y)+2z^2/y+\xi(x,y)$ for some $\xi(x,y)$ since $\omega^2\wedge d\omega^1 - \omega^1\wedge d\omega^2 = -2\varphi(x,y) \Omega$ by the structure equations. The structure equations also yield the identity $\omega^1\wedge d\omega^1-\omega^2\wedge d\omega^2 = 0$ which imposes the condition
\begin{equation*}
\begin{aligned}
0&=\frac{3\xi(x,y)\cos(2x)}{y^2}-\frac{\xi_y(x,y)\cos(2x)}{y}+\cot(2x)(f(x)\cos(2x)-2(1+\csc(2x))\\
\,&-(f(x)^2+f'(x)+1)y\cos(2x)
\end{aligned}
\end{equation*}
with solution given by 
\[
\xi(x,y)=y^3g(x)-(f(x)^2+f'(x)+1)y^3\ln(y)+2(1+\csc^2(2x)-f(x)\cot(2x))y^2
\]
for some $g(x)$. At this point, the structure equations are completely satisfied, despite the arbitrary functions $f(x)$ and $g(x)$. The proof of the case that $\epsilon =1$ is the same, but with slightly different PDE conditions in coordinates. Indeed, a similar analysis yields that 
\begin{equation*}
\begin{aligned}
\varphi(x,y,z) & =yf(x)+\csc(2x),\\
\psi(x,y,z) & =-2z\varphi(x,y)+2z^2/y+\xi(x,y)
\end{aligned}
\end{equation*}
for some $f(x)$ and for some $\xi(x,y)$ that satisfies $\omega^1\wedge d\omega^1+\omega^2\wedge d\omega^2 = 0$.
\end{proof}
\begin{prop}\label{normal-form-eta-pos}
Let the 1-forms $(\eta^1,\eta^2)$ and the open set $U$ with coordinates $(x,y,z)$ be as in Theorem \ref{normal-form} with $\epsilon=1$. Then with $\eta^3=y\,\omega^3$ the $C$-invariant for $(\eta^1,\eta^2)$ is
\[
C=\frac{\csc(2x)}{y}
\]
which satisfies $C_3\neq0$ and $dC\wedge dC_3\neq0$. Moreover, $U$ is the union of the following (possibly disconnected) open sets: 
\begin{equation}
\begin{aligned}
U_e&=\left\{(x,y,z)\in U: \left|\frac{\csc(2x)}{y}\right|<1\right\}\\
U_h&=\left\{(x,y,z)\in U: \left|\frac{\csc(2x)}{y}\right|>1\right\}.
\end{aligned}
\end{equation}

\end{prop}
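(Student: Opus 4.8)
The plan is to run the first two steps of the equivalence procedure of Section~\ref{sec-eq-prob} on the pair $(\eta^1,\eta^2)$, feeding in the explicit coframing produced in Theorem~\ref{normal-form} for $\epsilon=1$. Inverting the rotation there gives $\eta^1=\cos(x)\omega^1-\sin(x)\omega^2$ and $\eta^2=\sin(x)\omega^1+\cos(x)\omega^2$, while $y=\zeta_3$ and $\omega^3=y^{-1}\,dx$ give $\eta^3=y\,\omega^3=dx$. The key simplification is that $dx=d\zeta=\zeta_3\,\omega^3=\eta^3$, so that differentiating the rotation yields
\begin{equation*}
d\eta^1=\eta^2\wedge\eta^3+\cos(x)\,d\omega^1-\sin(x)\,d\omega^2,\qquad d\eta^2=-\eta^1\wedge\eta^3+\sin(x)\,d\omega^1+\cos(x)\,d\omega^2,
\end{equation*}
into which I would substitute the structure equations \eqref{struct-eqs 2} with $A_1=A_2=0$, $\epsilon=1$, and $C=\cot(2x)$ (Lemma~\ref{C-zeta}).

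First I would verify that $(\eta^1,\eta^2,\eta^3)$ is $1$-adapted. Wedging the displayed $2$-forms against $\eta^1$ and $\eta^2$ and inserting $d\omega^1,d\omega^2$ from \eqref{struct-eqs 2}, the ``rotated'' contributions to $\eta^1\wedge d\eta^1$ and to $\eta^2\wedge d\eta^2$ both vanish because $\cos(2x)=\sin(2x)\cot(2x)$, leaving $\eta^1\wedge d\eta^1=\eta^2\wedge d\eta^2=\eta^1\wedge\eta^2\wedge\eta^3=y\,\Omega$; hence $(\eta^1,\eta^2,\eta^3)$ is $1$-adapted with associated sign $\epsilon_\eta=-1$. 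The same substitution gives $\eta^1\wedge d\eta^2+\eta^2\wedge d\eta^1=\bigl((1+\epsilon)\sin(2x)+2\cot(2x)\cos(2x)\bigr)\Omega=2\csc(2x)\,\Omega$ (the $A_3$ terms cancel), and comparison with $2C\,\eta^1\wedge\eta^2\wedge\eta^3=2Cy\,\Omega$ gives $C=\csc(2x)/y$, as claimed.

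For the derived invariants I would use the coframe relations $dx=\eta^3$, $dy=(z/y)\eta^3-y\,\eta^2$, $dz=-y^2\eta^1-N_2y\,\eta^2+(N_1+N_2z/y)\eta^3$, obtained by inverting the formulas for $\eta^1,\eta^2$. Since $C=\csc(2x)/y$ depends only on $x$ and $y$, one gets $dC=C\,\eta^2+C_3\,\eta^3$ with $C_3=-\tfrac{\csc(2x)}{y}\bigl(2\cot(2x)+z/y^2\bigr)$, so that $dC\wedge\eta^1\wedge\eta^2=C_3\,\eta^1\wedge\eta^2\wedge\eta^3$. Differentiating $C_3$ once more and re-expanding, its $\eta^1$-component equals $\csc(2x)/y$, which is nonzero on $U$; since $dC$ has no $\eta^1$-component and $C=\csc(2x)/y\neq0$ on $U$, the $1$-forms $dC$ and $dC_3$ are everywhere independent on $U$, so $dC\wedge dC_3\neq0$ throughout $U$. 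The one genuinely delicate point is that $C_3$ itself vanishes along the hypersurface $\{z=-2y^2\cot(2x)\}$ of $U$; I would read ``$C_3\neq0$'' as the generic statement that $(\eta^1,\eta^2)$ lies in the $C_3\neq0$ regime (Case~2 of Theorem~\ref{non-const-C-struct}) rather than as a pointwise claim on all of $U$, or else restrict $U$ to the dense open set where it holds.

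Finally the decomposition is immediate: since $\epsilon_\eta=-1$, Proposition~\ref{sig-to-taut} says $(\eta^1,\eta^2)$ is elliptic exactly where $|C|<1$, linear exactly where $|C|=1$, and hyperbolic exactly where $|C|>1$; with $C=\csc(2x)/y$ these are precisely $U_e$, the hypersurface $\{|y|=\csc(2x)\}$, and $U_h$. Thus $U=U_e\cup U_h$ up to the nowhere dense linear locus, which I would either fold into the statement or discard. No individual step is deep; the work lies in the bookkeeping of the structure-equation substitutions and in tracking the exact loci where the nondegeneracy claims hold, and that bookkeeping — together with getting the domains right — is the main obstacle.
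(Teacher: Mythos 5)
Your computation is correct and, for the part of the statement the paper actually proves, it is essentially the paper's own argument: the paper also conjugates the structure equations \eqref{struct-eqs 2} (with $A_1=A_2=0$, $C=\cot(2x)$, $A_3=yf(x)+\csc(2x)$) through the rotation $R(x)$, obtains $d\eta^1,d\eta^2$ modulo $\omega^3$, and reads off $\eta^1\wedge d\eta^1=\eta^2\wedge d\eta^2=\eta^1\wedge\eta^2\wedge\eta^3$ together with the cross terms, so that the quadratic form in $a_1,a_2$ gives $C=\csc(2x)/y$ and the elliptic/hyperbolic split of $U$ exactly as you argue via Proposition \ref{sig-to-taut} with $\epsilon_\eta=-1$. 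Where you go beyond the paper is in checking the derivative claims: the paper's proof never verifies $C_3\neq0$ or $dC\wedge dC_3\neq0$, whereas your coordinate computation $dC=C\,\eta^2+C_3\,\eta^3$ with $C_3=-\tfrac{\csc(2x)}{y}\bigl(2\cot(2x)+z/y^2\bigr)$, and the observation that $dC_3$ has $\eta^1$-component $\csc(2x)/y\neq0$ while $dC$ has none, correctly establishes $dC\wedge dC_3\neq0$ on all of $U$. Your flag about the locus $\{z=-2y^2\cot(2x)\}$, where $C_3$ vanishes (and likewise the $|C|=1$ locus omitted from $U_e\cup U_h$), is a genuine and correct caveat about the literal pointwise wording of the statement rather than a defect of your argument; reading those claims generically, or shrinking $U$ off these hypersurfaces as you propose, is the right repair, and it is information the paper's own proof does not supply.
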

\begin{proof}
We differentiate the relation between $\eta^1,\eta^2$ and $\omega^1$ and $\omega^2$. Let us first write 
\[
\bm{\omega}=R(x)\bm{\eta},
\]
where $\bm{\omega}=[\omega^1\quad \omega^2]^T$, $\bm{\eta}=[\eta^1\quad \eta^2]^T$, and 
\[
R(x)=\begin{bmatrix}\cos(x) & \sin(x)\\ -\sin(x) & \cos(x) \end{bmatrix}.
\]
It follows that
\[
d\bm{\eta}=R^{-1}d\bm{\omega}-R^{-1}dR\wedge \bm{\eta}.
\]
Moreover, if we let 
\[
S=\begin{bmatrix}C-A_3 & -1\\ -\epsilon  & -A_3-C \end{bmatrix}
\]
then
\[
d\bm{\omega}=S\omega^3\wedge \bm{\omega}
\]
and therefore 
\[
d\bm{\eta}=(R^{-1}SR-yR^{-1}R')\omega^3\wedge \bm{\eta},
\]
where $R'$ denotes the derivative of $R$ with respect to $x$. 
As such, when $\epsilon =1$ we get
\begin{equation*}
\begin{aligned}
d\eta^1&=yf(x)\eta^1\wedge\omega^3+y\eta^2\wedge\omega^3,\\
d\eta^2&=-y\eta^1\wedge\omega^3+(yf(x)+2\csc(2x))\eta^2\wedge\omega^3,
\end{aligned}
\end{equation*}
and so
\begin{equation*}
\begin{aligned}
\eta^1\wedge d\eta^1 & = y\eta^1\wedge \eta^2 \wedge \omega^3,\\
\eta^1\wedge d\eta^2 & =(yf(x)+2\csc(2x))\eta^1\wedge\eta^2\wedge \omega^3,\\
\eta^2\wedge d\eta^1 & = -yf(x)\eta^1\wedge \eta^2 \wedge \omega^3,\\
\eta^2\wedge d\eta^2 & =y\eta^1 \wedge \eta^2\wedge \omega^3.
\end{aligned}
\end{equation*}
Thus, if $\eta_a=a_1\eta^1+a_2\eta^2$ we find that 
\begin{equation*}
\begin{aligned}
\eta_a\wedge d\eta_a &=a_1^2\eta^1\wedge d\eta^1+a_1a_2\eta^1 \wedge d\eta^2+a_1a_2\eta^2 \wedge d\eta^1+a_2^2 \eta^2 \wedge d\eta^2,\\
\,&=(ya_1^2+ya_2^2-2a_1a_2(\csc(2x)))\eta^1\wedge \eta^2\wedge \omega^3. 
\end{aligned}
\end{equation*}
Notice that if we set $\eta^3=y\omega^3$ then the we have that 
\begin{equation*}
\begin{aligned}
\eta^1\wedge d\eta^1 & = \eta^1\wedge \eta^2 \wedge \eta^3,\\
\eta^1\wedge d\eta^2 & =\left(f(x)+\frac{2\csc(2x)}{y}\right)\eta^1\wedge\eta^2\wedge \eta^3,\\
\eta^2\wedge d\eta^1 & = -f(x)\eta^1\wedge \eta^2 \wedge \eta^3,\\
\eta^2\wedge d\eta^2 & =\eta^1 \wedge \eta^2\wedge \eta^3.
\end{aligned}
\end{equation*}
It then follows that 
\begin{equation*}
\begin{aligned}
\eta_a\wedge d\eta_a &=a_1^2\eta^1\wedge d\eta^1+a_1a_2\eta^1 \wedge d\eta^2+a_1a_2\eta^2 \wedge d\eta^1+a_2^2 \eta^2 \wedge d\eta^2,\\
\,&=\left(a_1^2+a_2^2-2a_1a_2\left(\frac{\csc(2x)}{y}\right)\right)\eta^1\wedge \eta^2\wedge \eta^3. 
\end{aligned}
\end{equation*}
So $(\eta^1,\eta^2)$ forms a bi-contact ellipse or hyperbola depending on when $|\frac{\csc(2x)}{y}|<1$ or $|\frac{\csc(2x)}{y}|>1$ respectively. 
\end{proof}
If one tries to extend Proposition \ref{normal-form-eta-pos} to the case of $\epsilon =-1$, it becomes clear that there is no way to normalize the induced volume forms from $\eta^1$ and $\eta^2$ to be consistent with a bi-contact ellipse. It is possible to adapt the frame further so that the $(\eta^1,\eta^2)$ bi-contact structure satisfies the case of $dC\wedge dC_3\neq 0$ and thus satisfies the structure equations \ref{struct-eqs 2}. The explicit formulas yield non-trivial and complicated $A_1$ and $A_2$ invariants which we do not list here. 

The following theorem yields another local normal form but for the case of $dC\wedge\omega^1\wedge \omega^2\neq0$ with $dC\wedge dC_3=0$. 
\begin{thm}
Let $(\omega^1,\omega^2,\omega^3,\omega^4)$ be a coframe on 4-manifold $\wt{M}$ with structure equations \eqref{symp-struct-eqs}. Then on any open set $U\subset \wt{M}$ such that $E>0$ there exists a local coordinate chart on a possibly smaller open set such that 
\begin{equation}
\begin{aligned}
\omega^1&=K\,dx+L\,dy,\\
\omega^2&=C\omega^1-\left(\partiald{K}{z}\,dx+\partiald{L}{z}\,dy\right),\\
\omega^3&=dz,\\
\omega^4&=\frac{1}{2w}dw-\left(K^2\partiald{\,}{z}\left(\frac{f}{K}\right)\right)dx-\left(L^2\partiald{\,}{z}\left(\frac{f}{L}\right)\right)dy,
\end{aligned}
\end{equation}
\end{thm}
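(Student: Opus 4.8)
The plan is to manufacture the four coordinates one at a time directly from the structure equations \eqref{symp-struct-eqs}, and then read off the coefficient functions of each $\omega^i$ by matching $2$-forms in the resulting chart.

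First I would use $d\omega^3=0$ and the Poincar\'e lemma to produce a local function $z$ with $\omega^3=dz$; in this case one moreover has $dC=C_3\,\omega^3$, so $C$ depends only on $z$, which is harmless but worth recording. Next I would observe that the rank-two Pfaffian system $\{\omega^1,\omega^2\}$ is completely integrable on $\wt M$: reading off \eqref{symp-struct-eqs}, every term of $d\omega^1$ and of $d\omega^2$ carries a factor of $\omega^1$ or of $\omega^2$, so $d\omega^1\equiv d\omega^2\equiv 0\bmod\{\omega^1,\omega^2\}$. The Frobenius theorem then supplies functionally independent local functions $x,y$ with $\operatorname{span}\{dx,dy\}=\operatorname{span}\{\omega^1,\omega^2\}$, hence $\omega^1=K\,dx+L\,dy$ and $\omega^2=P\,dx+Q\,dy$ for some functions $K,L,P,Q$. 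Finally, on the set $E>0$ I would set $w=E$ and use the already-derived relation $dE=E_1\omega^1+E_2\omega^2+2E\,\omega^4$ to solve
\[
\omega^4=\frac{dw}{2w}-\frac{E_1}{2E}\,\omega^1-\frac{E_2}{2E}\,\omega^2 .
\]
That $(x,y,z,w)$ is an honest coordinate system near the point is then immediate: $dx\wedge dy$ is a nonzero multiple of $\omega^1\wedge\omega^2$, $dz=\omega^3$, and $dw=dE$ has $\omega^4$-component $2E\neq0$, so $dx\wedge dy\wedge dz\wedge dw$ is a nonzero multiple of $\omega^1\wedge\omega^2\wedge\omega^3\wedge\omega^4$.

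With the chart in hand I would extract the precise shapes of $\omega^2$ and $\omega^4$. The key bookkeeping point is that in the coframe $\{dx,dy,dz,dw\}$ none of $\omega^1,\omega^2,\omega^4$ has a $dz$-component, so the vector field dual to $\omega^3$ is exactly $\partial/\partial z$ and $E$ is independent of $z$. Expanding $d\omega^1=dK\wedge dx+dL\wedge dy$ and comparing with $d\omega^1=\omega^1\wedge\omega^4-C\,\omega^1\wedge\omega^3+\omega^2\wedge\omega^3$, the $dx\wedge dz$ and $dy\wedge dz$ coefficients force $P=CK-\partiald{K}{z}$ and $Q=CL-\partiald{L}{z}$, i.e. $\omega^2=C\omega^1-\bigl(\partiald{K}{z}\,dx+\partiald{L}{z}\,dy\bigr)$, as claimed. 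For $\omega^4$ the idea is to recognize both connection coefficients $E_1/(2E)$ and $E_2/(2E)$ as being governed by a single function: set $f:=E_2/(2E)$, and compute the $\omega^2\wedge\omega^3$-coefficient of $d^2E=0$ using \eqref{symp-struct-eqs}; this yields $\partial E_2/\partial z=E_1+CE_2$, hence $\partiald{f}{z}=(E_1+CE_2)/(2E)$ and $E_1/(2E)=\partiald{f}{z}-Cf$. Substituting this together with the expression just found for $\omega^2$ into the formula for $\omega^4$, the $\omega^1,\omega^2$ terms collapse to
\[
\frac{E_1}{2E}\omega^1+\frac{E_2}{2E}\omega^2=\Bigl(\partiald{f}{z}K-f\,\partiald{K}{z}\Bigr)dx+\Bigl(\partiald{f}{z}L-f\,\partiald{L}{z}\Bigr)dy=K^2\,\partiald{\,}{z}\!\Bigl(\frac{f}{K}\Bigr)dx+L^2\,\partiald{\,}{z}\!\Bigl(\frac{f}{L}\Bigr)dy,
\]
which is exactly the stated form of $\omega^4$.

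I expect the genuinely delicate step to be this last one: guessing the correct function $f$ and verifying through $d^2E=0$ that the two connection coefficients really are $\partiald{f}{z}-Cf$ and $f$ for a common $f$ — without that compatibility the coefficients of $\omega^4$ would not package into the advertised $z$-derivative expressions. Everything else (Poincar\'e for $\omega^3$, the Frobenius argument, the nondegeneracy check for the chart, and the $2$-form matching giving $\omega^2$) is routine. I would also make explicit the admissibility hypotheses under which the formulas are meaningful, namely $E>0$ and $K,L$ nonvanishing on the chart, and note that the $\epsilon=-1$ variant is obtained by carrying the same computation through with the sign change in $d\omega^2$.
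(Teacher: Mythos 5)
Your proposal is correct, and its skeleton is the same as the paper's: Frobenius applied to $\langle\omega^1,\omega^2\rangle$ to get $x,y$, the Poincar\'e lemma for $\omega^3=dz$, the choice $w=E$ justified by $\omega^1\wedge\omega^2\wedge\omega^3\wedge dE=2E\,\Omega\neq0$, and coefficient matching in $d\omega^1$ to force $\omega^2=C\omega^1-\left(\partiald{K}{z}dx+\partiald{L}{z}dy\right)$ (the paper's relation $K_2=CK_1-\partial_z K_1$). Where you genuinely diverge is the final packaging of $\omega^4$: the paper writes $E_i=2wf_i$, derives the full PDE system \eqref{w-ODE}--\eqref{f-alg}, solves it explicitly (the $w^{-1/2}$ scaling, the second-order ODE $Q''=(C^2+\epsilon+C')Q$, the constant Wronskian $W_0$, and the algebraic equations for $f_1,f_2$ from the $dx\wedge dy$ components), and only then observes $f_1=-Cf_2+\partial_z f_2$; you instead get this compatibility directly from the $\omega^2\wedge\omega^3$-component of $d^2E=0$, namely $\partial_z E_2=E_1+CE_2$ (which indeed holds, is $\epsilon$-independent, and uses $e_3=\partial_z$, which you correctly justify from the absence of $dz$-components in $\omega^1,\omega^2,\omega^4$). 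Your shortcut is cleaner and avoids integrating any ODEs, which is a real gain for the displayed normal form. What it does not buy is the ``where'' clause that follows the theorem in the paper --- the explicit description of $K,L$ as $w^{-1/2}$ times solutions $Q_1,Q_2$ of the second-order ODE and of $f$ via $\frac{w}{W_0\det(h)}\left(\partiald{L}{x}-\partiald{K}{y}\right)$ --- which in the paper is obtained precisely from the PDE analysis you skip; if that description is counted as part of the statement, you would need to add the paper's separation-of-variables step. One further minor point, shared with the paper: the expressions $K^2\partial_z(f/K)$ and $L^2\partial_z(f/L)$ should really be read as $K\partial_z f-f\partial_z K$ and $L\partial_z f-f\partial_z L$, so your caveat about $K,L$ nonvanishing is about notation only, not about the validity of the normal form.
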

where
\begin{equation*}
\begin{aligned}
f&=\frac{w}{W_0\det(h)}\left(\partiald{L}{x}-\partiald{K}{y}\right),\\
K&=\frac{1}{\sqrt{w}}\left(h_{11}(x,y)Q_1(z)+h_{12}(x,y)Q_2(z)\right),\\
L&=\frac{1}{\sqrt{w}}\left(h_{21}(x,y)Q_1(z)+h_{22}(x,y)Q_2(z)\right),
\end{aligned}
\end{equation*}
for some invertible $2\times 2$ matrix $h(x,y)=[h_{ij}]$ of smooth functions of two variables $x$ and $y$, the functions $Q_1$ and $Q_2$ are linearly independent solutions of
\[
\frac{d^2Q}{dz^2}=\left(C^2+\epsilon+\frac{dC}{dz}\right)Q,
\]
and $W_0\neq0$ is the (constant) Wronskian of $Q_1$ and $Q_2$ for some initial conditions. 
\begin{proof}
From the structure equations \eqref{symp-struct-eqs} on an open set $U\subset \wt{M}$ we observe that the Pfaffian system $\mcal{I}=\langle\omega^1,\omega^2\rangle$ is integrable by the Frobenius theorem. As such, there exists a local coordinate system $(x,y,z,w)$ on $U$ (or a possibly smaller open set) such that $\mcal{I}=\langle dx,dy\rangle$. Moreover, since $d\omega^3=0$, then we simply define the coordinate $z$ such that $\omega^3=dz$. Finally, we take coordinate $w$ such that $w=E$. This forms a a valid coordinate system since
\[
\omega^1\wedge\omega^2\wedge\omega^3\wedge\,dE=2E\,\omega^1\wedge\omega^2\wedge\omega^3\wedge\omega^4\neq0.
\]
We now express the coframe using this coordinate system. That is, there exist functions $K_i(x,y,z,w),L_i(x,y,z,w)$,$i=1,2$ such that 
\begin{equation}
\begin{aligned}
\omega^1&=K_1\,dx+L_1\,dy\\
\omega^2&=K_2\,dx+L_2\,dy.
\end{aligned}
\end{equation}
Moreover, we use the structure equation for $E$ to determine $\omega^4$. Let $E_i=2wf_i$, $i=1,2$ so that 
\[
\omega^4=\frac{1}{2w}dw-(f_1K_1+f_2K_2)dx-(f_1L_1+f_2L_2)dy.
\]
We now directly apply the structure equations \eqref{symp-struct-eqs} to these local coordinate expressions which yield the PDEs and algebraic conditions:

\begin{align}
2w\partiald{K_i}{w}+K_i&=0, & 2w\partiald{L_i}{w}+L_i&=0,\label{w-ODE}\\
\partiald{K_1}{z}-CK_1+K_2&=0, & \partiald{K_2}{z}+\epsilon K_1+CK_2&=0,\label{K-z-ODE}\\
\partiald{L_1}{z}-CL_1+L_2&=0, & \partiald{L_2}{z}+\epsilon L_1+CL_2&=0,\label{L-z-ODE}\\
-f_2(K_1L_2-K_2L_1)-\partiald{L_1}{x}+\partiald{K_1}{y}&=0, & f_1(K_1L_2-K_2L_1)-\partiald{L_2}{x}+\partiald{K_2}{y}&=0.\label{f-alg}
\end{align}
It is easy to find a general solution to these differential equations. Indeed, \eqref{w-ODE} yields $K_i=w^{-1/2}k_i(x,y,z),L_i=w^{-1/2}\ell_i(x,y,z)$ for $i=1,2$. Equations \eqref{K-z-ODE} and \eqref{L-z-ODE} are effectively linear ODE systems. In fact, if we differentiate twice the ODEs for $K_1$ and $L_1$ and use the first order equations for $K_2$ and $L_2$ we find that 
\begin{equation*}
\begin{aligned}
\partialdss{K_1}{z}&=\left(\frac{dC}{dz}+C^2+\epsilon\right)K_1,\\
\partialdss{L_1}{z}&=\left(\frac{dC}{dz}+C^2+\epsilon\right)L_1,
\end{aligned}
\end{equation*}
so that if $Q_1(z)$ and $Q_2(z)$ are linearly independent solutions of the ODE 
\[
\frac{d^2Q}{dz^2}=\left(C^2+\epsilon+\frac{dC}{dz}\right)Q
\]
then
\begin{equation*}
\begin{aligned}
K_1&=\frac{1}{\sqrt{w}}\left(h_{11}(x,y)Q_1(z)+h_{12}(x,y)Q_2(z)\right),\\
L_1&=\frac{1}{\sqrt{w}}\left(h_{21}(x,y)Q_1(z)+h_{22}(x,y)Q_2(z)\right),
\end{aligned}
\end{equation*}
where $h_{ij}(x,y)$ are arbitrary smooth functions so that the $2\times 2$ matrix $[h_{ij}]$ is invertible. Expressions for $K_2$ and $L_2$ follow from \eqref{K-z-ODE} and \eqref{L-z-ODE} yielding
\begin{equation*}
\begin{aligned}
K_2&=CK_1-\partiald{K_1}{z},\\
L_2&=CL_1-\partiald{L_1}{z}.
\end{aligned}
\end{equation*}
Next we observe that 
\[
K_1L_2-K_2L_1=-\frac{1}{w}\det(h)W(Q_1,Q_2),
\]
where $W(Q_1,Q_2)$ is the Wronskian of $Q_1$ and $Q_2$ which in fact is an arbitrary constant, say $W_0$, since $W'=0$. As such, 
\begin{equation*}
\begin{aligned}
f_1&=-\frac{w}{W_0\det(h)}\left(\partiald{L_2}{x}-\partiald{K_2}{y}\right),\\
f_2&=\frac{w}{W_0\det(h)}\left(\partiald{L_2}{x}-\partiald{K_2}{y}\right).
\end{aligned}
\end{equation*}
Upon substituting $L_2$ and $K_2$ we may write
\[
f_1=-Cf_2+\partiald{f_2}{z}.
\]
It then follows that 
\begin{equation*}
\begin{aligned}
f_1K_1+f_2K_2&=K_1^2\partiald{\,}{z}\left(\frac{f_2}{K_1}\right),\\
f_1L_1+f_2L_2&=L_1^2\partiald{\,}{z}\left(\frac{f_2}{L_1}\right).
\end{aligned}
\end{equation*}
Upon relabelling $K=K_1$, $L=L_1$, and $f_2=f$ the theorem follows. 
\end{proof}

\section{An Invariant Metric}\label{sec-inv-metric}
In all of the bi-contact structures studied in this paper so far, each of them admits a natural invariant Riemannian metric $g$ given by 
\[
g=(\omega^1)^2+(\omega^2)^2+(\omega^3)^2.
\]
We dedicate this section to some curvature calculations associated to this metric for each of the bi-contactomorphism cases. Notice that this metric need not be compatible with the bi-contact structure as in \cite{PerroneBiContactCirc,PerroneBiContactHyp}; however, in the case of a $(-1)$-bi-contact structure, this metric is exactly a minimizer of the Chern-Hamilton energy functional \cite{CritCompatMetricsChernHamilton}. 

\begin{remark}
We note that there is another natural invariant metric defined by 
\[
g_{\mcal{P}_C}=(\omega^1)^2+2C\omega^1\omega^2-\epsilon (\omega^2)^2+(\omega^3)^2.
\]
In this case the dual frame $(\be_1,\be_2,\be_3)$ fails to be orthonormal since $g_{\mcal{P}_C}(\be_1,\be_2)=C$. A related version of this metric will appear in forthcoming work. 
\end{remark}

In each case, we will be able to rewrite the invariant structure equations as the first Cartan structure equations for the Levi-Civita connection of the invariant metric. 

\[
\omega=\begin{bmatrix} \omega^1 \\ \omega^2 \\ \omega^3\end{bmatrix}
\]
so that each connection 1-form $\theta$ may be written as 
\[
\theta=\begin{bmatrix} 0 & \omega^1_2 & \omega^1_3\\ \omega^2_1 & 0 & \omega^2_3\\ \omega^3_1 & \omega^3_2 & 0\end{bmatrix}
\]
where $\omega^i_j =- \omega^j_i$ for all $1\leq i,j\leq 3$. so that the Cartan structure equations
\[
d\omega = -\theta \wedge \omega
\]
hold. In each subsection, we present the connection 1-form coming from the invariant structure equations for bi-contact equivalence. Then we'll present some curvature calculations of certain submanifolds via the curvature two form $\Theta=d\theta+\theta\wedge\theta$. We find in particular, that the invariant $C$ and $A_3$ has special significance in different cases. 

Assume that $\omega^3$ is integrable and let $\omega^i_j=\Gamma^i_{jk}\omega^k$ so that if $\Sigma$ is an integral manifold of $\omega^3$, then the second fundamental form for $\Sigma$ is
\begin{equation}\label{2nd ff}
\begin{aligned}
II_{\Sigma}&=\bar{\omega}^T\begin{bmatrix}\Gamma^3_{11} & \Gamma^3_{12}\\ \Gamma^3_{21} & \Gamma^3_{22} \end{bmatrix}\bar{\omega},\\
&=\bar{\omega}^TS_\Sigma\bar{\omega}
\end{aligned}
\end{equation}
where $S_\Sigma$ is the fundamental form matrix and $\bar{\omega}$ is the pullback of $\omega$ to leaves of the foliation of $\omega^3$. Thus, the mean curvature for each leaf of the $\omega^3$ foliation is $H=\Gamma^3_{11}+\Gamma^3_{22}$. Recall also, that 
\[
\Theta^1_2=d\omega^1_2+\omega^3_1\wedge\omega^3_2
\]
meaning the curvature 2-form of a leaf $\iota: \Sigma \hookrightarrow M$ of $\omega^3$ is  
\begin{equation}\label{leaf curvature}
\iota^*(d\omega^1_2)=\iota^*(\Theta^1_2)-\det(S_{\Sigma})\bar{\omega}^1\wedge\bar{\omega}^2.
\end{equation}
\subsection{Bi-contact Equivalence \texorpdfstring{$C_3=0$}{C3=0} Connection}
Let $\theta^{Bi_1}$ be the Levi-Civita connection 1-form for the $C_3=0$ case. Then $\theta^{Bi_1}$ has entries given by
\begin{equation*}
\begin{aligned}
\omega^1_2&=\frac{1}{2}(1-\epsilon -B_3)\omega^3,\\
\omega^3_1&=(A_3-C)\omega^1+\frac{1}{2}\left(1+\epsilon +B_3\right)\omega^2+B_2\omega^3,\\
\omega^3_2&=\frac{1}{2}\left(1+\epsilon -B_3\right)\omega^1+(A_3+C)\omega^2+B_1\omega^3.
\end{aligned}
\end{equation*}
where $B_1$ and $B_2$ are in terms of higher order derivatives of $C$, and $B_3$ is the obstruction to integrability of $\omega^3$. 
\begin{prop}
If $B_3=0$ then each leaf of the foliation defined by $\omega^3$ is flat as a submanifold of $M$ and has mean curvature
\[
A_3=C\cos(2\xi)-\frac{1}{2}(\epsilon +1)\sin(2\xi).
\]
\end{prop}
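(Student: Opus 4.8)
The plan is to read the second fundamental form of a leaf directly off the Levi--Civita connection $\theta^{Bi_1}$ displayed above, and to use the hypothesis $B_3=0$ twice: once so that the distribution $\ker\omega^3$ is genuinely integrable, and once so that the induced connection on a leaf vanishes identically.

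First I would invoke Frobenius. With $B_3=0$ the structure equations \eqref{struct-eqs 1} give $d\omega^3=(B_2\omega^1+B_1\omega^2)\wedge\omega^3$, so $d\omega^3\wedge\omega^3=0$ and $\omega^3$ is integrable; fix a leaf $\iota\colon\Sigma\hookrightarrow M$. For the metric $g=(\omega^1)^2+(\omega^2)^2+(\omega^3)^2$ the frame $\{\be_1,\be_2\}$ spans $T\Sigma=\ker\omega^3$ while $\be_3$ is the unit normal, so by \eqref{2nd ff} the second fundamental form matrix is $S_\Sigma=[\Gamma^3_{ij}]$, where $\Gamma^3_{ij}$ is the coefficient of $\omega^j$ in $\omega^3_i$. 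Substituting $B_3=0$ into the displayed expressions for $\omega^3_1,\omega^3_2$ yields
\[
S_\Sigma=\begin{bmatrix} A_3-C & \frac{1}{2}(1+\epsilon)\\ \frac{1}{2}(1+\epsilon) & A_3+C\end{bmatrix},
\]
which is symmetric precisely because $B_3=0$, as a genuine second fundamental form must be. Hence the mean curvature $H=\Gamma^3_{11}+\Gamma^3_{22}$ equals $2A_3$ (equivalently, with the averaging normalization, $A_3$). Since we are in the $C_3=0$ case with $B_3=0$ identically, part (1) of Proposition \ref{der-conditions} applies and gives $A_3=C\cos(2\xi)-\frac{1}{2}(\epsilon+1)\sin(2\xi)$, the asserted value.

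For flatness I would restrict the first Cartan structure equation to $\Sigma$. Writing $\bar\omega^i=\iota^*\omega^i$ and using $\iota^*\omega^3=0$, the first two equations of \eqref{struct-eqs 1} collapse to $d\bar\omega^1=d\bar\omega^2=0$; thus $\bar\omega^1,\bar\omega^2$ are locally exact, say $\bar\omega^1=du$, $\bar\omega^2=dv$, so the induced metric is $\iota^*g=du^2+dv^2$, which is flat. Equivalently, one may note that $\iota^*\omega^1_2=\frac{1}{2}(1-\epsilon-B_3)\,\iota^*\omega^3=0$ when $B_3=0$, so the intrinsic Levi--Civita connection $1$-form of $(\Sigma,\iota^*g)$ --- which by the Gauss formula is exactly $\iota^*\omega^1_2$ --- vanishes identically, and therefore so does its curvature $\iota^*(d\omega^1_2)$; this is the special case $\iota^*(d\omega^1_2)=0$ of \eqref{leaf curvature}, so that $\iota^*\Theta^1_2=\det(S_\Sigma)\,\bar\omega^1\wedge\bar\omega^2$ records only the ambient curvature of $M$.

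I do not expect a genuine obstacle here: both claims fall out of the structure equations \eqref{struct-eqs 1}, the explicit connection $\theta^{Bi_1}$, and Proposition \ref{der-conditions}. The only points needing care are the $\omega^i_j=-\omega^j_i$ sign bookkeeping when transcribing $S_\Sigma$, checking that $S_\Sigma$ really does come out symmetric (a consistency check which is forced by $B_3=0$), and making explicit that the vanishing of the induced connection $1$-form --- the crux of the flatness assertion --- is exactly the second place the integrability hypothesis $B_3=0$ enters.
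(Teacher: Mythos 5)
Your proof is correct and takes essentially the same route as the paper: you read $S_\Sigma$ off the displayed connection forms $\omega^3_1,\omega^3_2$, identify the mean curvature with $A_3$ (the paper also uses the averaged-trace normalization, despite writing $H=\Gamma^3_{11}+\Gamma^3_{22}$), pull the formula for $A_3$ from Proposition \ref{der-conditions}(1), and get flatness from the vanishing of the induced connection/curvature on a leaf. Your closed-coframe argument $d\bar\omega^1=d\bar\omega^2=0$ is just a more explicit version of the paper's observation that $\iota^*(d\omega^1_2)=\tfrac{1}{2}(1-\epsilon)\,\iota^*d\omega^3=0$, and your remarks on integrability and the symmetry of $S_\Sigma$ only make explicit what the paper leaves implicit.
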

\begin{proof}
By equation \eqref{leaf curvature} and Theorem \ref{non-const-C-struct}, we deduce that 
\[
\iota^*(d\omega^1_2)=\frac{1}{2}(1-\epsilon )\iota^*d\omega^3=0
\]
and therefore each leaf of $\omega^3$ is flat. Now by \eqref{2nd ff} and since $\Gamma^3_{11}=A_3-C$ and $\Gamma^3_{22}=A_3+C$, we have that the mean curvature is $A_3$. 
\end{proof}
\begin{cor}\label{C3-zero cor}
For a bi-contact structure that satisfies Theorem \ref{non-const-C-struct} with $C_3=B_3=0$ everywhere, then the integral surfaces of $\omega^3$ are never minimal in $(M,g)$. 
\end{cor}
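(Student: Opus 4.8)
The plan is to argue by contradiction. Suppose some leaf $\Sigma$ of the foliation defined by $\omega^3$ were minimal in $(M,g)$. By the Proposition immediately preceding the corollary, the hypothesis $B_3=0$ makes each such leaf flat with mean curvature $A_3$, so minimality of $\Sigma$ is equivalent to $A_3\equiv 0$ on $\Sigma$. Because $C_3=B_3=0$ everywhere, Theorem \ref{non-const-C-struct}(1) and Proposition \ref{der-conditions}(1) apply globally, so I may freely use $dC=\cos(\xi)\,\omega^1+\delta\sin(\xi)\,\omega^2$, the formula $A_3=C\cos(2\xi)-\frac{1}{2}(\epsilon+1)\sin(2\xi)$, the identity $\partial A_3/\partial\xi=1-\epsilon-2\xi_3$, and $d\xi=-\rho\sin(\xi)\,\omega^1+\rho\cos(\xi)\,\omega^2+\xi_3\,\omega^3$ with $\rho=\sin(2\xi)/(\xi_3+\epsilon-1)$. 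On $\Sigma$ we have $\omega^3=0$ while $\omega^1,\omega^2$ restrict to a coframe, so the above formulas restrict to give explicit expressions for $dC$ and $d\xi$ along $\Sigma$.

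The first computational step is to write $dA_3$ as a function of $C$ and $\xi$, namely $dA_3=\cos(2\xi)\,dC+(1-\epsilon-2\xi_3)\,d\xi$, and then restrict to $\Sigma$. Since $A_3\equiv 0$ on $\Sigma$ forces $dA_3|_\Sigma=0$, setting the $\omega^1$- and $\omega^2$-coefficients to zero yields two scalar equations in $\cos(2\xi)$, $\rho$, $P:=1-\epsilon-2\xi_3$, $\xi$, and $\delta$. The next step is elementary elimination: squaring and adding the two equations and using $\sin^2\xi+\cos^2\xi=1$ (and $\delta^2=1$) gives $\cos^2(2\xi)=\rho^2P^2$, while squaring and subtracting gives $\cos(2\xi)\bigl(\cos^2(2\xi)+\rho^2P^2\bigr)=0$. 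Combining these yields $\cos^3(2\xi)=0$, hence $\cos(2\xi)\equiv 0$ on $\Sigma$.

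Feeding $\cos(2\xi)\equiv 0$ back into $A_3=C\cos(2\xi)-\frac{1}{2}(\epsilon+1)\sin(2\xi)\equiv 0$ and using $\sin(2\xi)=\pm1$ forces $\epsilon=-1$; thus for $\epsilon=1$ the assumption is already contradictory. For $\epsilon=-1$, $\cos(2\xi)\equiv 0$ on the connected leaf $\Sigma$ means $\xi$ takes values in the discrete set $\{\pi/4+k\pi/2\}$, so $\xi$ is constant on $\Sigma$ and $d\xi|_\Sigma=0$; but $d\xi|_\Sigma=-\rho\sin(\xi)\,\omega^1+\rho\cos(\xi)\,\omega^2$, forcing $\rho\equiv 0$ on $\Sigma$, which contradicts the identity $\rho\,(\xi_3+\epsilon-1)=\sin(2\xi)=\pm1\neq 0$. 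This contradiction finishes the argument.

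I do not expect a serious obstacle, since every computation is short; the points requiring care are bookkeeping the cases $\epsilon=\pm1$ and the sign $\delta$ (the squaring step is what makes the elimination independent of $\delta$), and making sure the relations from Proposition \ref{der-conditions} are invoked exactly as established there. A secondary subtlety worth flagging is that ``minimal'' must be read as $A_3\equiv 0$ on a full connected leaf, which is precisely what allows the passage from $\cos(2\xi)\equiv 0$ to $\xi$ constant; this also shows the corollary is genuinely about leaves rather than pointwise vanishing of $A_3$, which can indeed occur (e.g.\ at zeros of $C$ when $\epsilon=-1$).
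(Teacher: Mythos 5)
Your proof is correct and follows the same route the paper intends: identify minimality of a leaf with $A_3\equiv 0$ via the preceding proposition, then derive a contradiction from the relations of Proposition \ref{der-conditions} (the restriction of $dC$, $d\xi$, the formula $\partial A_3/\partial\xi=1-\epsilon-2\xi_3$, and $\rho(\xi_3+\epsilon-1)=\sin(2\xi)$). You simply supply in full the elimination and the $\epsilon=\pm1$ case analysis that the paper's one-line proof leaves implicit.
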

\begin{proof}
Apply the formula for $A_3=0$ and then deduce the implications of this from Proposition \ref{der-conditions}. 
\end{proof}
It is possible, however, that when $C$ is a constant (and hence Theorem \ref{non-const-C-struct} does not apply), we may have minimal surfaces as integral surfaces of $\omega^3$. 
\begin{ex}
Indeed, consider Example \ref{T^2xR ex} on $\mathbb{T}^2\times\mathbb{R}$. Then the structure equations are 
\begin{equation}
\begin{aligned}
d\omega^1&=-\sinh(2\psi)\omega^1\wedge \omega^3+\omega^2\wedge \omega^3\\
d\omega^2&=\omega^1\wedge\omega^3+\sinh(2\psi)\omega^2\wedge\omega^3\\
d\omega^3&=0.
\end{aligned}
\end{equation}
Thus, the connection mimics that of the case of $C_3\neq 0$. However, $A_3=0$ (which does not violate the structure equations for this case) unlike in Corollary \ref{C3-zero cor}. As such, each level surface of $\omega^3$ is minimal and flat and so we have that each torus $\mathbb{T}^2\times{z_0}$ is minimal and flat for all $z_0\in\mathbb{R}$.
\end{ex}
Moreover, it is straightforward to calculate the curvatures, which turn out to have all components zero or in terms of $C=\sinh(2\psi)$. The curvature 2-form is 
\[
\Theta=\begin{bmatrix} 0 & \cosh^2(2\psi)\,\omega^1\wedge \omega^2 & -\cosh^2(2\psi)\,\omega^1\wedge \omega^3\\
\cosh^2(2\psi)\,\omega^2\wedge \omega^1 & 0 & -\cosh^2(2\psi)\,\omega^2\wedge \omega^3\\
-\cosh^2(2\psi)\,\omega^3\wedge \omega^1 & -\cosh^2(2\psi)\,\omega^3\wedge \omega^2 & 0 \end{bmatrix}
\]

\subsection{Bi-contact Equivalence \texorpdfstring{$C_3 \neq 0$}{C3<>0} Connection}
Let $\theta^{Bi_2}$ be the Levi-Civita connection 1-form for the $C_3\neq0$ case. Then $\theta^{Bi_2}$ has entries given by
\begin{equation*}
\begin{aligned}
\omega^1_2&=A_2\omega^1-A_1\omega^2+\frac{1}{2}(\epsilon -1)\omega^3,\\
\omega^3_1&=(A_3-C)\omega^1+\frac{1}{2}\left(1+\epsilon \right)\omega^2+\sin(\zeta)\omega^3,\\
\omega^3_2&=\frac{1}{2}\left(1+\epsilon \right)\omega^1+(A_3+C)\omega^2+\cos(\zeta)\omega^3.
\end{aligned}
\end{equation*}
\begin{prop}
Each leaf of the foliation defined by $\omega^3$ has mean curvature $H=A_3$ and Gauss curvature $K=A_3^2-C^2-\frac{1}{2}(1+\epsilon )$.
\end{prop}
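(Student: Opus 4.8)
The plan is to compute the second fundamental form of a leaf directly from the connection $\theta^{Bi_2}$ displayed above, and then read off $H$ and $K$ from \eqref{2nd ff} and \eqref{leaf curvature}. First I would observe that for the structure equations \eqref{struct-eqs 2} one has $d\omega^3\wedge\omega^3=0$, so $\omega^3$ is Frobenius integrable and each leaf $\iota\colon\Sigma\hookrightarrow M$ carries the induced metric with orthonormal coframe $\bar\omega^1=\iota^*\omega^1$, $\bar\omega^2=\iota^*\omega^2$. Writing $\omega^3_i=\Gamma^3_{ij}\omega^j$ and reading the $\omega^1,\omega^2$ coefficients off the listed $\omega^3_1$ and $\omega^3_2$, the fundamental-form matrix of \eqref{2nd ff} is
\[
S_\Sigma=\begin{bmatrix} A_3-C & \tfrac12(1+\epsilon) \\ \tfrac12(1+\epsilon) & A_3+C\end{bmatrix},
\]
which is symmetric, as it must be since $\omega^3$ is integrable here.

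The mean curvature is then immediate from \eqref{2nd ff}: $\Gamma^3_{11}+\Gamma^3_{22}=(A_3-C)+(A_3+C)=2A_3$, so $H=A_3$ exactly as in the $C_3=0$ proposition. For the Gauss curvature I would use \eqref{leaf curvature}; the relevant quantity is $\det S_\Sigma=(A_3-C)(A_3+C)-\tfrac14(1+\epsilon)^2=A_3^2-C^2-\tfrac14(1+\epsilon)^2$, and since $\epsilon=\pm1$ we have $\tfrac14(1+\epsilon)^2=\tfrac14(1+2\epsilon+\epsilon^2)=\tfrac12(1+\epsilon)$, giving $\det S_\Sigma=A_3^2-C^2-\tfrac12(1+\epsilon)$. (Note the cross term $\tfrac12(1+\epsilon)$ disappears altogether when $\epsilon=-1$, where $S_\Sigma$ is already diagonal.)

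It remains to identify the intrinsic Gauss curvature $K$ of $\Sigma$ with $\det S_\Sigma$, i.e. to show the ambient term $\iota^*(\Theta^1_2)$ in \eqref{leaf curvature} vanishes. To this end I would pull $d\omega^1_2$ back to $\Sigma$: using $\omega^1_2=A_2\omega^1-A_1\omega^2+\tfrac12(\epsilon-1)\omega^3$ and \eqref{struct-eqs 2}, and the fact that $\iota^*\omega^3=0$ kills every term carrying an $\omega^3$ factor, one gets $d\bar\omega^1=\iota^*d\omega^1=A_2\,\bar\omega^1\wedge\bar\omega^2$ and $d\bar\omega^2=\iota^*d\omega^2=-A_1\,\bar\omega^1\wedge\bar\omega^2$, and hence $\iota^*(d\omega^1_2)=\bigl(A_1^2+A_2^2-(A_1)_1-(A_2)_2\bigr)\bar\omega^1\wedge\bar\omega^2$, where $(A_i)_j$ denote the covariant derivatives of $A_i$. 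Comparing with \eqref{leaf curvature} reduces the proposition to the divergence identity
\[
(A_1)_1+(A_2)_2=A_1^2+A_2^2+A_3^2-C^2-\tfrac12(1+\epsilon),
\]
which I would extract from the second-order integrability conditions $d^2\omega^1=d^2\omega^2=0$ and $d^2\alpha=0$ of the $e$-structure \eqref{struct-eqs 2} (using also $d\zeta$ from Proposition \ref{der-conditions}).

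I expect this last identity to be the main obstacle: extracting the shape operator and the algebra $\tfrac14(1+\epsilon)^2=\tfrac12(1+\epsilon)$ are bookkeeping, but showing $\iota^*(\Theta^1_2)=0$ — equivalently that the ambient metric $g$ has vanishing sectional curvature along the tangent planes of the leaves — genuinely uses the prolonged structure equations. If one instead reads ``Gauss curvature'' in the extrinsic sense, as the product of principal curvatures $\det S_\Sigma$ (the same convention already used for the mean curvature in the $C_3=0$ case), then the proof terminates at the determinant computation above and no further integrability analysis is required.
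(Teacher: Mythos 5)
Your shape--operator computation is exactly the intended argument, and it is the whole proof: read $\Gamma^3_{ij}$ off the displayed $\omega^3_1,\omega^3_2$, so that $S_\Sigma=\begin{bmatrix} A_3-C & \frac12(1+\epsilon)\\ \frac12(1+\epsilon) & A_3+C\end{bmatrix}$, whence $H=A_3$ (the paper, as in the $C_3=0$ proposition, uses the averaged trace of \eqref{2nd ff}) and $\det S_\Sigma=A_3^2-C^2-\frac14(1+\epsilon)^2=A_3^2-C^2-\frac12(1+\epsilon)$ since $\epsilon^2=1$. The ``Gauss curvature'' in the statement is this extrinsic (Gauss--Kronecker) curvature $\det S_\Sigma$, so your ``fallback'' reading is in fact the intended one and no integrability analysis is needed.

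The branch you single out as the main obstacle --- proving $\iota^*(\Theta^1_2)=0$, equivalently $(A_1)_1+(A_2)_2=A_1^2+A_2^2+A_3^2-C^2-\frac12(1+\epsilon)$ --- should be abandoned, not attempted: it is not a consequence of the structure equations. The identities $d^2\omega^i=0$ for \eqref{struct-eqs 2} only pin down the combinations $(A_2)_3-(A_3)_2$ and $(A_1)_3-(A_3)_1$ (together with the $d\zeta$ relation of Proposition \ref{der-conditions}); the quantity $(A_1)_1+(A_2)_2$ is left free, and the paper's own special case shows the proposed identity fails. Indeed, when $A_1=A_2=0$ (Lemma \ref{C-zeta}, Theorem \ref{normal-form}) your pullback formula gives $\iota^*(d\omega^1_2)=0$, so the leaves are intrinsically flat, while $A_3^2-C^2-\frac12(1+\epsilon)$ is not identically zero in that class (for $\epsilon=-1$ one has $C=-\csc(2\zeta)$ and, in the normal form, $A_3=yf(x)-\cot(2x)$, so $A_3^2-C^2\neq 0$ generically). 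Hence the intrinsic reading of the proposition is simply false, and the ambient sectional curvature of the leaf tangent planes does not vanish in general; the correct statement, and the correct proof, end at your determinant computation.
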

\subsection{Minimal Contact Hypersurfaces in a Symplectic Manifold}
Let us consider the situation of a bicontact structure such that $dC\wedge\omega^1\wedge\omega^2=C_3\Omega\neq0$ and $dC_3\wedge \omega^3=0$ on $M$. From Proposition \ref{symp-prop} we know that we may work on a symplectic manifold $\wt{M}$ where we have two independent symplectic structures given by $\theta^i=d\omega^i$, $i=1,2$. On $\wt{M}$ we have a natural invariant metric given by 
\[
g=(\omega^1)^2+(\omega^2)^2+(\omega^3)^2+(\omega^4)^2.
\]
As in the previous section, we may inquire as to the nature of some natural hyper surfaces. First we write out the connection 1-form $\theta$ so that $d\omega=-\theta\wedge \omega$ where $\omega=[\omega^1\quad \omega^2\quad\omega^3\quad\omega^4]^T$. If we denote
\[
\theta=\begin{bmatrix} 0 & \omega^1_2 & \omega^1_3 & \omega^1_4\\ \omega^2_1 & 0 & \omega^2_3 & \omega^2_4\\ \omega^3_1 & \omega^3_2 & 0& \omega^3_4\\ \omega^4_1 & \omega^4_2 & \omega^4_3 & 0\end{bmatrix}
\]
where $\omega^i_j =- \omega^j_i$ for all $1\leq i,j\leq 4$, then the structure equations $d\omega=-\theta\wedge\omega$ are compatible with our structure equations \eqref{symp-struct-eqs} if 
\begin{equation}
\begin{aligned}
\omega^1_2&=\frac{1}{2}(1-\epsilon )\omega^3-\frac{1}{2}E\omega^4,\\
\omega^3_1&=-C\omega^1+\frac{1}{2}\left(1+\epsilon \right)\omega^2,\\
\omega^3_2&=\frac{1}{2}\left(1+\epsilon \right)\omega^1+C\omega^2,\\
\omega^4_1&=\frac{1}{2}E\omega^2+\omega^1,\\
\omega^4_2&=-\frac{1}{2}E\omega^1+\omega^2,\\
\omega^4_3&=0.
\end{aligned}
\end{equation}
Integral surfaces of $\omega^3$ are contact submanifolds of $\wt{M}$ with contact form $\omega^4$. Moreover, the shape operator $S_\Sigma$ for such submanifolds is given by 
\[
S_\Sigma=\begin{bmatrix}-C & \frac{1}{2}\left(1+\epsilon \right) & 0\\\frac{1}{2}\left(1+\epsilon \right) & C & 0\\ 0 & 0 & 0 \end{bmatrix}.
\]
As such, we have that each $\Sigma$ an integral manifold of $\omega^3$ is minimal as a hypersurface in $(\wt{M},g)$. 

Given the relative simplicity of the connection, we may calculate the curvature two form. Let $\Theta$ be the $\mathfrak{so}(4)$-valued 2-form with nonzero entries given by
\begin{equation}
\begin{aligned}
\Theta^1_2&=(C^2-\frac{3}{4}E^2-\frac{1}{2}(1-\epsilon ))\omega^1\wedge\omega^2-\frac{1}{2}E_1\omega^1\wedge\omega^4-\frac{1}{2}E_2\omega^2\wedge\omega^4,\\
\Theta^1_3&=-\left(C^2+C_3+\frac{1}{2}(1+\epsilon )\right)\omega^1\wedge\omega^3+\left(C-\frac{1}{4}E(1+\epsilon )\right)\omega^1\wedge\omega^4\\
&+C(1-\epsilon )\omega^2\wedge\omega^3-\frac{1}{2}\left(1+\epsilon +CE\right)\omega^2\wedge\omega^4,\\
\Theta^1_4&=-\frac{1}{2}E_1\omega^1\wedge\omega^2+\left(C-\frac{1}{4}E(1+\epsilon )\right)\omega^1\wedge\omega^3+\left(\frac{1}{4}E^2-1\right)\omega^1\wedge\omega^4\\
&-\frac{1}{2}\left(1+\epsilon +CE\right)\omega^2\wedge\omega^3,\\
\Theta^2_3&=C(1+\epsilon )\omega^1\wedge\omega^3-\frac{1}{2}(1+\epsilon +CE)\omega^1\wedge\omega^4\\
&-\left(\frac{1}{2}(1+\epsilon )+C^2-C_3\right)\omega^2\wedge\omega^3+\left(-C+\frac{1}{4}E(1+\epsilon )\right)\omega^2\wedge\omega^4,\\
\Theta^2_4&=-\frac{1}{2}E_2\omega^1\wedge\omega^2-\frac{1}{2}(1+\epsilon +CE)\omega^1\wedge\omega^3+\left(-C+\frac{1}{4}E(1+\epsilon )\right)\omega^2\wedge\omega^3\\
&-\left(1-\frac{1}{4}E^2\right)\omega^2\wedge\omega^4,\\
\end{aligned}
\end{equation}
and $\Theta^3_4=0$. 

From the curvature 2-forms, we find that the scalar curvature is $S=-(E^2/2+2C^2+7+\epsilon )<0$. Moreover, we may compute $\text{Pf}(\Theta)$ and in so doing we find that 
\[
\text{Pf}(\Theta)=2((1+\epsilon )+2C^2)\Omega
\]
where $\Omega=\omega^1\wedge\omega^2\wedge\omega^3\wedge\omega^4$. Of course, $\wt{M}$ is an exact symplectic manifold, and hence there are no closed examples to which one may apply the Chern-Gauss-Bonnet theorem. Never-the-less, it may be possible to understand such manifolds with ends that admit finite volume and known extensions of the Chern-Gauss-Bonnet theorem. Indeed, since $dC\wedge \omega^1\wedge \omega^2 =C_3\,\Omega\neq0$ and $dC_3\wedge \omega^3=0$ then the invariant $C$ ought to generically be a Morse-Bott function, and its critical surfaces may well define $M$ as a manifold with ends. Understanding the corresponding $\wt{M}$ of such an $M$ with ends would be of potential interest in symplectic topology of 4-manifolds. 
\section{Conclusion and Further Work}\label{sec-conclusion}

We have demonstrated that bi-contact structures under bi-contactomorphisms admit non-trivial local invariants. The invariant of primary concern is the function $C$ and its derivatives. We found local normal forms in special cases and demonstrated a Cartan style function count for initial data needed to determine a bi-contactomorphism in case $C$ is constant. Finally, we presented some invariant metric calculations and explored some local geometry of foliations determined by $\omega^3$.

There are many ways to generalize this study of the local geometry of bi-contact structures. One may increase the dimension and the number of independent contact structures and study similar types of diffeomorphisms as bi-contactomorphisms. The first author and the author of \cite{Jackman-Bi-Contact} are now working on projects that both generalize bi-contact geometry and find connections between bi-contact geometry and other research areas of intrigue. 

Finally, although this work is primarily local in nature, there are many interesting questions around finding compact examples of each of the bi-contactomorphism classes induced by an invariant $C$. Moreover, as hinted with the Pfaffian computation, it may be possible to discern global topological properties via the Chern-Gauss-Bonnet theorem for manifolds with ends determined by the invariant function $C$.

\bibliographystyle{amsplain}	
\bibliography{ContactRef}	

\end{document}